\newcommand{\dosfilas}[2]{
  \ldelim[{2}{2mm}& #1 &\rdelim]{2}{2mm} \\
  & #2 & &  & &
}
\theoremstyle{plain}
\newtheorem{theorem}{Theorem}[section]
\newtheorem{lemma}[theorem]{Lemma}
\newtheorem{corollary}[theorem]{Corollary}
\theoremstyle{definition}
\newtheorem{definition}[theorem]{Definition}
\theoremstyle{remark}
\newtheorem{remark}[theorem]{Remark}
\newtheorem*{remark*}{Remark}
\numberwithin{equation}{section}
\newcommand\D{{\mathcal D}}
\newcommand\A{{\mathcal A}}
\newcommand\I{{\mathcal I}}
\newcommand\U{{\mathcal{U}}}
\newcommand\cY{{\mathcal Y}}
\newcommand\RR{{\mathbb R}}
\newcommand\ZZ{{\mathbb Z}}
\newcommand\NN{{\mathbb N}}
\newcommand\PP{{\mathbb P}}
\newcommand\II{{\mathbb I}}
\newcommand\UU{{\mathbb U}}
\newcommand\no{{\rho}}
\newcommand\awr{\operatorname{\mbox{$\alpha$}-wr}}
\newcommand\bwr{\operatorname{\mbox{$\beta$}-wr}}
\newcommand\gwr{\operatorname{\mbox{$\gamma$}-wr}}
\newcommand\rank{\operatorname{rank}}
\title[]{Differential equations for discrete \\ Jacobi-Sobolev orthogonal polynomials}
\author{Antonio J. Dur\'an and Manuel D. de la Iglesia}
\address{Antonio J. Dur\'an\\
Departamento de An\'{a}lisis Matem\'{a}tico \\
Universidad de Sevilla \\
Apdo (P. O. BOX) 1160\\
41080 Sevilla. Spain.}
\email{duran@us.es}
\address{Manuel D. de la Iglesia\\
Instituto de Matem\'aticas \\
Universidad Nacional Aut\'onoma de M\'exico \\
Circuito Exterior, C.U.\\
04510, Mexico D.F. Mexico.}
\email{mdi29@im.unam.mx}
\thanks{Partially supported by MTM2012-36732-C03-03 (Ministerio de Econom\'ia y Competitividad),
FQM-262, FQM-4643, FQM-7276 (Junta de Andaluc\'ia) , Feder Funds (European Union) and PAPIIT-DGAPA-UNAM grant IA100515 (México).}
\date{\today}
\subjclass[2010]{33C45, 33E30, 42C05}
\keywords{Orthogonal polynomials. Differential operators and equations.
Jacobi polynomials. Krall polynomials.}
\begin{document}
  \maketitle

   \begin{abstract}
The aim of this paper is to study differential properties of orthogonal polynomials with respect to a discrete Jacobi-Sobolev bilinear form
with mass point at $-1$ and/or $+1$. In particular, we construct the orthogonal polynomials using certain Casorati determinants. Using this construction, we prove that when the Jacobi parameters $\alpha$ and $\beta$ are nonnegative integers the Jacobi-Sobolev orthogonal polynomials are eigenfunctions of a differential operator of finite order (which will be explicitly constructed). Moreover, the order of this differential operator is explicitly computed in terms of the matrices which define the discrete Jacobi-Sobolev bilinear form.
\end{abstract}

\section{Introduction and main results}
Classical polynomials are orthogonal polynomials (with respect to a positive measure) which are in addition eigenfunctions of a second-order differential operator. As a consequence of S. Bochner classification theorem of 1929 (see \cite{B}), it follows that there are only three families of classical
orthogonal polynomials: Hermite, Laguerre and Jacobi (and Bessel
polynomials if signed measures are considered). Although such result actually goes back to E.J. Routh in 1884 (see \cite{Rou}).

H.L. Krall raised in 1939 (see \cite{Kr1,Kr2}) the problem of finding
orthogonal polynomials which are also common eigenfunctions of a
higher-order differential operator with polynomial coefficients.
He obtained a complete classification for the case of a
differential operator of order four (see \cite{Kr2}). Besides the
classical families of Hermite, Laguerre and Jacobi, he  found three other
families of orthogonal polynomials which are also eigenfunctions of a fourth-order
differential operator. Two of them are orthogonal
with respect to positive measures which consist of particular instances of Jacobi weights together with one or two Dirac deltas at the endpoints of the interval of
orthogonality. Indeed, consider the Koornwinder measures (see \cite{Ko})
\begin{equation}\label{jacobid}
(1-x)^\alpha (1+x)^\beta +M\delta_{-1}+N\delta_1,\quad \alpha, \beta >-1.
\end{equation}
Then, the examples found by Krall correspond with the cases $\alpha=\beta=0$ and $M=N$, and $\beta=N=0$ in (\ref{jacobid}), respectively. Krall also discovered a new family satisfying sixth-order differential equations, which corresponds with the case $\alpha=\beta=0$ in
(\ref{jacobid}). But he never published this example which was rediscovered by
L.L. Littlejohn forty years later (see \cite{L1}).

R. Koekoek proved in 1994 that the Koornwinder polynomials orthogonal with respect to the weight (\ref{jacobid}) for $\alpha =\beta\in \NN $ and $M=N$ are also eigenfunctions of a differential operator of order $2\alpha+4$ (see \cite{koe}). F.A. Grünbaum and L. Haine (et al.) proved that polynomials satisfying fourth or higher-order differential equations can be generated by applying Darboux transformations to certain instances of the classical polynomials (see \cite{GrH1,GrHH,GrY}). A. Zhedanov proposed a technique to construct Krall's polynomials and found a differential equation of order $2\alpha +4$ for the orthogonal polynomials with respect to (\ref{jacobid}) when $\alpha$ is a nonnegative integer and $M=0$ (see \cite{Zh}).
R. and J. Koekoek proved in 2000 the general case. More precisely, they found a differential operator for the orthogonal polynomials with respect to the weight (\ref{jacobid}) for which they are eigenfunctions; this operator has infinite order except for the following cases, where the order is finite and equals (see \cite{koekoe2}):
$$
\begin{cases} 2\beta +4, & \mbox{if $M>0,N=0$ and $\beta\in \NN$},\\
2\alpha +4, & \mbox{if $M=0,N>0$ and $\alpha\in \NN$},\\
2\alpha+2\beta +6, & \mbox{if $M>0,N>0$ and $\alpha,\beta\in \NN$}.
\end{cases}
$$
Using a different approach, P. Iliev (see \cite{Plamen1}) has improved this result by studying the algebra of differential operators associated with Krall-Jacobi orthogonal polynomials.

In 2003, discrete Jacobi-Sobolev orthogonal polynomials which are also common eigenfunctions of a higher-order differential operator entered into the picture. H. Bavinck (see \cite{Ba}) proved that orthogonal polynomials with respect to the discrete Jacobi-Sobolev inner product
$$
\langle p,q\rangle=\int_{-1}^1
p(x)q(x)(1-x)^{\alpha}(1+x)^\beta dx+Mp^{(l_1)}(-1)q^{(l_1)}(-1)+Np^{(l_2)}(1)q^{(l_2)}(1),
$$
are eigenfunctions of a differential operator of infinite order, except for the following cases, where the order is finite and equals:
$$
\begin{cases} 2\beta +4l_1+4, & \mbox{if $M>0,N=0$ and $\beta\in \NN$},\\
2\alpha +4l_2+4, & \mbox{if $M=0,N>0$ and $\alpha\in \NN$},\\
2\alpha+2\beta +4l_1+4l_2+6, & \mbox{if $M>0,N>0$ and $\alpha,\beta\in \NN$}.
\end{cases}
$$
For other related papers see \cite{JKLL1,JKLL2,Mar}.

\medskip

For $\alpha,\beta,\alpha+\beta \not =-1,-2,\ldots,$ we use the
following definition of the Jacobi polynomials:
\begin{align*}
J_n^{\alpha,\beta}(x)&=\frac{(-1)^n(\alpha+\beta+1)_n}{2^n(\beta+1)_n}\sum_{j=0}^n\binom{n+\alpha}{j}\binom{n+\beta}{n-j}(x-1)^{n-j}(x+1)^j\\
&=\frac{(-1)^n(\alpha+\beta+1)_n(\alpha+1)_n}{n!(\beta+1)_n}{}_2F_1\left(-n,n+\alpha+\beta+1; \alpha+1;\frac{1-x}{2}\right),
\end{align*}
where $(a)_n=a(a+1)\cdots(a+n-1)$ denotes the Pochhammer symbol. We use a different normalization of the standard definition of the Jacobi polynomials $P_n^{\alpha,\beta}$ and the equivalence is given by $J_n^{\alpha,\beta}(x)=(-1)^n\frac{(\alpha+\beta+1)_n}{(\beta+1)_n}P_n^{\alpha,\beta}(x)$ (these and the next formulas can be found in \cite{Bate} pp. 168--173).

We denote by $\mu_{\alpha,\beta}(x)$ the orthogonalizing weight for the Jacobi polynomials normalized so that 
$
\int \mu_{\alpha,\beta}(x)dx=2^{\alpha+\beta+1}\frac{\Gamma(\alpha+1)\Gamma(\beta+1)}{\Gamma(\alpha+\beta +1)}.
$
Only when $\alpha,\beta >-1$, $\mu_{\alpha,\beta }(x)$, $-1<x<1$, is positive, and then
\begin{equation}\label{pJac}
\mu_{\alpha,\beta}(x) =(1-x)^\alpha (1+x)^\beta,\quad -1<x<1.
\end{equation}

The Jacobi polynomials $(J_n^{\alpha,\beta})_n$ are eigenfunctions of the following second-order differential operator
\begin{equation}\label{dopJac}
D_{\alpha,\beta}=(x^2-1)\left(\frac{d}{dx}\right)^2+((\alpha+\beta+2)x-\beta+\alpha)\frac{d}{dx}.
\end{equation}
That is
\begin{equation}\label{autov}
D_{\alpha,\beta}(J_n^{\alpha,\beta})=\theta_nJ_n^{\alpha,\beta},\quad\theta_n=n(n+\alpha+\beta+1), \quad n\geq0.
\end{equation}
For $m_1,m_2\geq0$ with $m=m_1+m_2\geq1$, let $\bm M=(M_{i,j})_{i,j=0}^{m_1-1}$ and $\bm N=(N_{i,j})_{i,j=0}^{m_2-1}$ be $m_1\times m_1$ and $m_2\times m_2$ matrices, respectively. In particular, if $m_1=0$ or $m_2=0$ we take $\bm M=0$ or $\bm N=0$, respectively.
We consider the discrete Jacobi-Sobolev bilinear form defined by
\begin{equation}\label{idslip}
\langle p,q\rangle=\int _ {I} p(x)q(x)\mu_{\alpha-m_2,\beta-m_1} dx+\mathbb{T}^{m_1}_{-1 }(p)\bm M\mathbb{T}^{m_1}_{-1}(q)^T+\mathbb{T}^{m_2}_{1}(p)\bm N\mathbb{T}^{m_2}_{1}(q)^T,
\end{equation}
where for a nonnegative integer $k$, a real number $\lambda$ and a polynomial $p$, we define
$\mathbb{T}^{k}_{\lambda }(p)=\left(p(\lambda),p'(\lambda),\ldots,p^{(k-1)}(\lambda)\right)$. For $\alpha>m_2-1$ and $\beta>m_1-1$, the measure $\mu_{\alpha-m_2,\beta-m_1}$ in \eqref{idslip} is then $(1-x)^{\alpha-m_2}(1+x)^{\beta-m_1}$ and  $I= (-1,1)$.

The purpose of this paper is to prove in a constructive way that
if $\alpha$ and $\beta$  are nonnegative integers with $\alpha \ge m_2$ and $\beta\ge m_1$, then the orthogonal polynomials with respect to (\ref{idslip}) are eigenfunctions of a finite order differential operator with polynomial coefficients. For discrete Laguerre-Sobolev orthogonal polynomials see \cite{ddI1}.

To display our results in full detail we need some notation.
For $a,b,c,d,x\in \RR$, we define
\begin{equation}\label{gammas}
\Gamma^{a,b}_{c,d}(x)=\frac{\Gamma(x+a+1)\Gamma(x+b+1)}{\Gamma(x+c+1)\Gamma(x+d+1)}.
\end{equation}
We next introduce the functions $z_l$, $1\le l\le m,$ in the following way. For $l=1,\ldots, m_1$, $z_l$ is defined by
\begin{align}\label{ri1}
z_l(x)&=\frac{2^{\alpha+\beta-m_1+l}\Gamma(\beta-m_1+l)}{(m_1-l)!} \Gamma^{m_1-l,\alpha+\beta}_{0,\alpha+\beta-m_1+l}(x)\\
\nonumber &\qquad\quad+2^{m_2}\sum_{i=0}^{m_1-1}\left(\sum_{j=l}^{(l+m_2)\wedge m_1}\frac{(j-1)!\binom{m_2}{j-l}M_{i,j-1}}{(-2)^{i+j-l}}\right)
\frac{\Gamma^{\beta,\alpha+\beta+i}_{-i,\alpha}(x)}{\Gamma(\beta+i+1)},
\end{align}
where $\wedge$ denotes the minimum between two numbers. For $l=m_1+1,\ldots, m$, $z_l$ are defined by
\begin{align}\label{ri2}
z_l(x)&=\frac{2^{\alpha+\beta-m+l}\Gamma(\alpha-m+l)}{(m-l)!}
\Gamma^{m-l,\alpha+\beta}_{0,\alpha+\beta-m+l}(x)\\
\nonumber &\qquad\quad +\sum_{i=0}^{m_2-1}\left(\sum_{j=l-m_1}^{l\wedge m_2}\frac{(j-1)!\binom{m_1}{l-j}N_{i,j-1}}{(-1)^{l-m_1-1}2^{i+j-l}}\right)
\frac{\Gamma^{\alpha,\alpha+\beta+i}_{-i,\beta}(x)}{\Gamma(\alpha+i+1)}.
\end{align}
Finally, we also need the polynomials
\begin{align}\label{def1pi}
p(x)&=\prod_{i=1}^{m_1-1}(-1)^{m_1-i}(x+\alpha-m+1)_{m_1-i}(x+\beta-m_1+i)_{m_1-i},\\
\label{def1qi}q(x)&=(-1)^{\binom{m}{2}}\prod_{h=1}^{m-1}\left(\prod_{i=1}^{h}(2(x-m)+\alpha+\beta+i+h)\right).
\end{align}
Using a general result for discrete Sobolev bilinear forms
(see Lemma \ref{LemJacSob} in Section \ref{sec2}), we first
characterize the existence of (left) orthogonal polynomials with
respect to the Jacobi-Sobolev bilinear form above using the
(quasi) Casorati determinant defined by the sequences $z_l$,
$l=1,\ldots , m$. Moreover, we find a closed expression for these
orthogonal polynomials in terms of the Jacobi polynomials
$(J_n^{\alpha,\beta})_n$ and the sequences $z_l$, $l=1,\ldots, m$.

\begin{theorem}\label{mainth1}
For $m_1,m_2\geq0$ with $m=m_1+m_2\geq1$, let $\bm M=(M_{i,j})_{i,j=0}^{m_1-1}$ and $\bm N=(N_{i,j})_{i,j=0}^{m_2-1}$ be $m_1\times m_1$ and $m_2\times m_2$ matrices, respectively. For $\alpha\neq m_2-1, m_2-2, \ldots,$ and $\beta\neq m_1-1, m_1-2, \ldots,$ consider the discrete Jacobi-Sobolev bilinear form defined by (\ref{idslip}). If we write
\begin{equation}\label{defno}
\no ^h_{x,j}=\begin{cases} (-1)^{m-j}\Gamma^{\alpha-j,\beta-1}_{\alpha-m,\beta-j}(x),&\mbox{for $h=1,\ldots, m_1$,}\\
1,&\mbox{for $h=m_1+1,\ldots ,m$,}
\end{cases}
\end{equation}
then the following conditions are equivalent
\begin{enumerate}
\item The discrete Jacobi-Sobolev bilinear form (\ref{idslip}) has a sequence $(q_n)_n$ of (left) orthogonal polynomials.
\item The $m\times m$ (quasi) Casorati determinant
\begin{equation}\label{casdet}
\Lambda (n)=\frac{\begin{vmatrix}
\no^1_{n,1} z_1(n-1) &\no^1_{n,2} z_1(n-2) & \cdots &  \no^1_{n,m}z_1(n-m) \\
               \vdots & \vdots & \ddots & \vdots \\
\no^m_{n,1} z_m(n-1) & \no^m_{n,2} z_m(n-2) & \cdots & \no^m_{n,m}z_m(n-m)
             \end{vmatrix}}{p(n)q(n)},
\end{equation}
where $z_l, l=1,\ldots, m,$ $p$ and $q$ are defined by (\ref{ri1}), (\ref{ri2}), (\ref{def1pi}) and (\ref{def1qi}) respectively, does not vanish for $n\geq0$.
\end{enumerate}
Moreover, if one of these properties holds, the polynomials defined by
\begin{equation}\label{iquss}
q_n(x)=\frac{\begin{vmatrix}
               J_{n}^{\alpha,\beta}(x) & -J_{n-1}^{\alpha,\beta}(x) & \cdots & (-1)^mJ_{n-m}^{\alpha,\beta}(x) \\
\no^1_{n,0}z_1(n) &\no^1_{n,1} z_1(n-1) & \cdots &  \no^1_{n,m}z_1(n-m) \\
               \vdots & \vdots & \ddots & \vdots \\
\no^m_{n,0}z_m(n) & \no^m_{n,1} z_m(n-1) & \cdots & \no^m_{n,m}z_m(n-m)
             \end{vmatrix}}{p(n)q(n)},
\end{equation}
are orthogonal with respect to (\ref{idslip}) (as usual for $n<0$ we take $J_n^{\alpha,\beta} =0$).
\end{theorem}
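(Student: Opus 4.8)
The plan is to treat the Jacobi-Sobolev form \eqref{idslip} as a finite-rank perturbation of the ordinary Jacobi inner product and to feed the outcome into the general result Lemma \ref{LemJacSob}. The guiding idea is to look for the orthogonal polynomial as a combination of $m+1$ consecutive Jacobi polynomials, $q_n(x)=\sum_{j=0}^m(-1)^jc_{n,j}J_{n-j}^{\alpha,\beta}(x)$, and to observe that any such combination is \emph{automatically} left-orthogonal to a subspace of codimension $m$. Indeed, I would first pair $q_n$ with the polynomials $(1-x)^{m_2}(1+x)^{m_1}J_k^{\alpha,\beta}(x)$ for $0\le k\le n-1-m$. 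Since $(1-x)^{m_2}(1+x)^{m_1}\mu_{\alpha-m_2,\beta-m_1}=\mu_{\alpha,\beta}$, the integral part collapses to $\sum_j(-1)^jc_{n,j}\int J_{n-j}^{\alpha,\beta}J_k^{\alpha,\beta}\,\mu_{\alpha,\beta}\,dx$, which vanishes because $n-j\ge n-m>k$; and because $(1-x)^{m_2}(1+x)^{m_1}J_k^{\alpha,\beta}$ has a zero of order $m_1$ at $-1$ and of order $m_2$ at $1$, both $\mathbb{T}^{m_1}_{-1}$ and $\mathbb{T}^{m_2}_{1}$ annihilate it and the two discrete terms drop out. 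Thus every admissible $q_n$ is orthogonal to the $(n-m)$-dimensional space $(1-x)^{m_2}(1+x)^{m_1}\cdot\{\deg<n-m\}$, whatever the coefficients $c_{n,j}$ are (for $n<m$ the convention $J_n^{\alpha,\beta}=0$ makes the argument degenerate harmlessly).

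Since that space has codimension $m$ inside the polynomials of degree $<n$, genuine left-orthogonality reduces to $m$ scalar conditions. I would complete a basis by the $m$ low-degree test polynomials $\phi_l=(1-x)^{m_2}(1+x)^{l-1}$ for $1\le l\le m_1$ and $\phi_l=(1-x)^{l-m_1-1}(1+x)^{m_1}$ for $m_1<l\le m$, which are linearly independent (a triangular argument on their vanishing orders at $\pm1$) and complementary to the big subspace. Imposing $\langle q_n,\phi_l\rangle=0$ and writing $z_l(i)$, up to the normalizing factor $\no^h_{i,\cdot}$, for the pairing $\langle J_i^{\alpha,\beta},\phi_l\rangle$, these $m$ equations form a homogeneous system whose coefficient matrix is exactly the Casorati matrix of \eqref{casdet}. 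By Cramer's rule the system admits a solution giving a polynomial of degree exactly $n$ (that is, with $c_{n,0}\ne0$) precisely when $\Lambda(n)\ne0$, and expanding the solution along the top row reproduces the closed formula \eqref{iquss}, the factor $p(n)q(n)$ being the common prefactor extracted from the rows. This is the mechanism supplied by Lemma \ref{LemJacSob}, and it yields the equivalence (1)$\Leftrightarrow$(2) together with the explicit expression.

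The substantive remaining step is to evaluate the sequences $z_l$ in closed form and to match them with \eqref{ri1}–\eqref{ri2}; each $z_l$ splits into an integral contribution and a discrete contribution mirroring the two terms of those formulas. For $1\le l\le m_1$ the integral part is $\int J_i^{\alpha,\beta}(x)\,(1-x)^{m_2}(1+x)^{l-1}\mu_{\alpha-m_2,\beta-m_1}\,dx=\int J_i^{\alpha,\beta}\,\mu_{\alpha,\beta-m_1+l-1}\,dx$, a Jacobi moment against a shifted second parameter that evaluates to a ratio of Gamma functions and produces the factor $\Gamma^{m_1-l,\alpha+\beta}_{0,\alpha+\beta-m_1+l}$; the discrete part is $\mathbb{T}^{m_1}_{-1}(J_i^{\alpha,\beta})\,\bm M\,\mathbb{T}^{m_1}_{-1}(\phi_l)^T$, in which the endpoint derivatives of $J_i^{\alpha,\beta}$ at $-1$ are Pochhammer ratios and those of $(1-x)^{m_2}(1+x)^{l-1}$ deliver the binomials $\binom{m_2}{j-l}$ together with the powers of $(-2)$, reconstructing the $\bm M$-sum of \eqref{ri1}. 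The range $m_1<l\le m$ is treated identically at the endpoint $1$ and yields \eqref{ri2}; the asymmetry of the normalizers $\no^h_{x,j}$ (nontrivial for the rows attached to $-1$, equal to $1$ for those attached to $1$) merely records the factor pulled out of each row. The hypotheses $\alpha\neq m_2-1,m_2-2,\ldots$ and $\beta\neq m_1-1,m_1-2,\ldots$ guarantee that all these Gamma ratios are finite and the moment evaluations valid.

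I expect the principal obstacle to be exactly this explicit computation and the accompanying bookkeeping. One must evaluate in closed form the \emph{mismatched} Jacobi moments $\int J_i^{\alpha,\beta}\,\mu_{\alpha,\beta-m_1+l-1}\,dx$ (and their $+1$-analogues), together with the high-order endpoint derivatives of $J_i^{\alpha,\beta}$, then reorganize the resulting double sums of Pochhammer symbols and binomials and verify that the common prefactors coalesce precisely into $\no^h_{x,j}$ and into the denominator $p(n)q(n)$ of \eqref{def1pi}–\eqref{def1qi}. The conceptual skeleton—automatic orthogonality, reduction to $m$ conditions, and Cramer's rule—is short; the real weight of the theorem lies in identifying the sequences $z_l$ in so clean a form.
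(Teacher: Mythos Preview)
Your skeleton for $n\ge m$ is essentially the paper's own proof: it applies Lemma~\ref{LemJacSob} with $\lambda=-1$, $\mu=1$, $\nu=\mu_{\alpha-m_2,\beta-m_1}$, $p_n=J_n^{\alpha,\beta}$, and then computes $w^1_{n,l-1}$, $w^2_{n,l-m_1-1}$ and the endpoint derivatives of $J_n^{\alpha,\beta}$ to identify $R_l(n)$ with $z_l(n)$ up to the row factors encoded in $\no^h_{n,j}$. Your test polynomials $\phi_l$ are exactly the basis $b_l$ of \eqref{base}, so on this range the two arguments coincide.

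The genuine gap is the range $n=0,\ldots,m-1$, which you dismiss with ``the convention $J_n^{\alpha,\beta}=0$ makes the argument degenerate harmlessly.'' It does not. Two separate issues arise. First, the denominator $p(n)q(n)$ can vanish for small $n$ (whenever $\alpha$ or $\beta$ hit integer values), so the very definition of $\Lambda(n)$ and $q_n(x)$ as ratios is in question; the paper handles this by a rank argument on the $m\times(m+1)$ matrix $\tilde\Lambda(x)$, showing that at each zero of $p(x)q(x)$ the numerator determinant vanishes to at least the same order, so the quotient is a well-defined polynomial in $x$. Second, and more substantively, your orthogonality argument breaks down: for $n<m$ the test polynomials $\phi_l$ need not have degree $<n$, and Lemma~\ref{LemJacSob} itself only claims anything for $n\ge m$. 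The paper proves orthogonality of $q_n$ for $n<m$ by expanding $v_h=(1-x)^h$ in the basis $b_l$, observing that $\langle q_n,v_h\rangle$ is meromorphic in $\alpha,\beta$ so one may assume them non-integer, and then reducing the vanishing of the relevant determinant to the nontrivial combinatorial identities of Lemma~\ref{comb}. This second step is where the normalization by $p(n)q(n)$ earns its keep; it is not merely ``the common prefactor extracted from the rows'' but precisely the factor needed so that the formula extends across $n<m$. Without this part of the argument the equivalence (1)$\Leftrightarrow$(2) for all $n\ge0$ is unproved.
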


As in \cite{ddI1}, we find the differential properties of the orthogonal polynomials
$(q_n)_n$ (\ref{iquss}) by using the concept of \emph{$\D$-operators}.
This is an abstract concept introduced by one of us in
\cite{du1} which has shown to be very useful to generate
orthogonal polynomials which are also eigenfunctions of
differential, difference or $q$-difference operators (see
\cite{AD}, \cite{du1}--\cite{ddI2}). The basic facts about $\D$-operators will be
recalled in Section \ref{sec3}. Using the general theory of $\D$-operators
and the expression (\ref{iquss}) for the orthogonal polynomials
$(q_n)_n$, we prove in Section \ref{sec4} the following theorem.

\begin{theorem}\label{mainth2}
Assume that any of the two equivalent properties (1) and (2) in Theorem \ref{mainth1} holds. If $\bm M ,\bm N \not =0$, we assume, in addition, that $\alpha$ and $\beta$ are nonnegative integers with $\alpha \ge m_2$ and $\beta \ge m_1$. If, instead, $\bm M=0$,  we assume that only $\alpha$ is a positive integer with $\alpha \ge m_2$, and
if $\bm N=0$, we assume that only $\beta$ is a positive integer with $\beta \ge m_1$. Then there exists a finite order differential operator (which we construct explicitly) for which the orthogonal polynomials $(q_n)_n$ (\ref{iquss}) are eigenfunctions.
\end{theorem}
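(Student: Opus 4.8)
The plan is to derive the differential equation for $(q_n)_n$ from the closed formula (\ref{iquss}) together with the abstract theory of $\D$-operators recalled in Section \ref{sec3}, following the strategy used for the Laguerre-Sobolev polynomials in \cite{ddI1}. First I would expand the determinant (\ref{iquss}) along its first row. Because that row consists of the shifted Jacobi polynomials $(-1)^iJ_{n-i}^{\alpha,\beta}(x)$, $i=0,\dots,m$, the expansion writes $q_n$ as the finite combination
\[
q_n(x)=\sum_{i=0}^{m}(-1)^i c_{n,i}\,J_{n-i}^{\alpha,\beta}(x),
\]
where each coefficient $c_{n,i}$ is, up to the factor $p(n)q(n)$, the $m\times m$ minor of the Casorati array with entries $\no^h_{n,j}z_h(n-j)$ obtained by deleting the $(i+1)$-st column. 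The gain is that $q_n$ then lies in the span of $m+1$ consecutive Jacobi polynomials, each an eigenfunction of $D_{\alpha,\beta}$ with the explicit eigenvalue $\theta_n$ in (\ref{autov}).

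The central step is to recognize this combination as the output of the $\D$-operator construction applied to the Jacobi family. Concretely, I would exhibit two $\D$-operators for the pair $(J_n^{\alpha,\beta},D_{\alpha,\beta})$: one governed by the matrix $\bm M$ and attached to the mass point at $-1$ (encoding the sequences $z_1,\dots,z_{m_1}$), the other governed by $\bm N$ and attached to $+1$ (encoding $z_{m_1+1},\dots,z_m$). The explicit ratios of Gamma functions $\Gamma^{a,b}_{c,d}$ in (\ref{ri1})--(\ref{ri2}) should be exactly the structural sequences produced by evaluating the ladder relations of the Jacobi polynomials at the two endpoints, so that the coefficients $c_{n,i}$ match the combinations prescribed by the general theorem. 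Once the data is matched, the general $\D$-operator theorem of Section \ref{sec3} furnishes a differential operator $D_S$, polynomial in $D_{\alpha,\beta}$ and in the two $\D$-operators, satisfying $D_Sq_n=\lambda_n q_n$ for an explicit eigenvalue $\lambda_n$.

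It then remains to control the order of $D_S$. Each outer factor raises the order by the degree of the corresponding polynomial in $D_{\alpha,\beta}$, which is bounded in terms of $m_1$ and $m_2$; the delicate point is that a $\D$-operator for Jacobi is, a priori, only a formal lowering operator and need not be a genuine finite-order differential operator. Here the hypotheses enter: when $\alpha\in\NN$ (with $\alpha\ge m_2$) the $\D$-operator attached to $+1$ can be realized as an honest differential operator of finite order because the relevant hypergeometric/ladder expansion truncates, and symmetrically for $\beta\in\NN$ (with $\beta\ge m_1$) at $-1$. Assembling the two contributions gives a finite-order operator, and tracking the degrees reproduces an order of the form $2\alpha+2\beta+\cdots$, explicit in $\alpha,\beta,m_1,m_2$ and the matrices $\bm M,\bm N$, in agreement with the results of Bavinck and Koekoek-Koekoek. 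When one of $\bm M,\bm N$ vanishes only the corresponding single integrality hypothesis survives, which accounts for the three cases of the statement.

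I expect the main obstacle to be precisely the realization of the endpoint $\D$-operators as finite-order differential operators under the integrality conditions, together with checking that the two operators are compatible in the sense required by the general construction, so that their products with the polynomials in $D_{\alpha,\beta}$ still close up into a single differential operator rather than an operator of infinite order. A second, more computational, hurdle is the exact matching of the $z_l$-sequences with the $\D$-operator data, since it requires identifying the Gamma-function factors in (\ref{ri1})--(\ref{ri2}) with the coefficients generated by the Jacobi ladder relations at $\pm1$ and reconciling them with the normalizations $\no^h_{x,j}$, $p$ and $q$ in (\ref{defno}), (\ref{def1pi}) and (\ref{def1qi}).
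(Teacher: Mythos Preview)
Your overall architecture is right---express $q_n$ via the determinantal formula, match it to the template of Theorem \ref{Teor1}, and read off a differential operator---but you have misplaced the role of the integrality hypotheses on $\alpha$ and $\beta$, and this is a genuine gap.

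The two $\D$-operators for the Jacobi family are \emph{always} first-order differential operators, namely
\[
\D_1=-\tfrac{\alpha+\beta+1}{2}I+(1-x)\dfrac{d}{dx},\qquad
\D_2=\tfrac{\alpha+\beta+1}{2}I+(1+x)\dfrac{d}{dx},
\]
regardless of whether $\alpha$ or $\beta$ is an integer (see (\ref{dopjac})). Nothing truncates here, and no integrality is required for $\D_1,\D_2\in\A$. So the obstacle you single out---``a $\D$-operator for Jacobi is, a priori, only a formal lowering operator and need not be a genuine finite-order differential operator''---does not exist. The construction uses $m_1$ copies of $\D_1$ and $m_2$ copies of $\D_2$; the matrices $\bm M,\bm N$ do not enter the $\D$-operators themselves.

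Where the integrality \emph{does} enter is in the inputs $Y_l$ of Theorem \ref{Teor1}: that theorem requires polynomials $Y_1,\dots,Y_m$ with $z_l(x)=Y_l(\theta_x)$. Looking at (\ref{ri1})--(\ref{ri2}), the functions $z_l$ involve factors such as
\[
\Gamma^{\beta,\alpha+\beta+i}_{-i,\alpha}(x)=\frac{\Gamma(x+\beta+1)\Gamma(x+\alpha+\beta+i+1)}{\Gamma(x-i+1)\Gamma(x+\alpha+1)},
\]
which for generic $\beta$ are not polynomials in $x$, let alone in $\theta_x$. The paper's actual argument (proof of Theorem \ref{mainth3}) observes that when $\beta\in\NN$, $\beta\ge m_1$, these ratios reduce to the polynomials $u_j^\lambda(x)=(x+\alpha-\lambda+1)_j(x+\beta+\lambda-j+1)_j$ (see (\ref{bas})), which are manifestly in $\RR[\theta_x]$; the analogous reduction for $l>m_1$ uses $\alpha\in\NN$, $\alpha\ge m_2$. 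Only then does the determinant (\ref{iquss2}) match the template (\ref{qus}), Lemma \ref{l5.2} verifies assumptions (\ref{ass0})--(\ref{ass2}), and Theorem \ref{Teor1} yields the finite-order operator $D_{q,S}$. Your proposal would go through once you redirect the integrality argument from the $\D$-operators to the polynomiality of $z_l$ in $\theta_x$.
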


An important issue will be the explicit calculation of the order of this differential operator in terms of the matrices $\bm M$ and $\bm N$ which define the discrete Jacobi-Sobolev bilinear form (\ref{idslip}). As in \cite{ddI1}, the key concept to calculate that order will be the \emph{$\gamma$-weighted rank} associated to a real number $\gamma$ and a matrix $M$ defined as follows:

\begin{definition}\label{wrM}
Let $\gamma$ and $M$ be a real number and a $m\times m$ matrix,
respectively. Write $c_1,\ldots , c_m$, for the columns of $M$ and
define the numbers $\eta_j$, $j=1,\ldots , m,$ by
$$
\eta_1=\begin{cases} \gamma +m-1,& \mbox{if \;\;$c_m\not =0$,}\\ 0,&\mbox{if \;\;$c_m=0$;}\end{cases}
$$
and for $j=2,\ldots , m,$
\begin{equation*}\label{nj}
\eta_j=\begin{cases} \gamma +m-j,& \mbox{if \;\;$c_{m-j+1}\not \in <c_{m-j+2},\ldots , c_m>$,}\\ 0,&\mbox{if \;\;$c_{m-j+1} \in <c_{m-j+2},\ldots , c_m>$.}\end{cases}
\end{equation*}
Denote by $\tilde M$ the matrix whose columns are $c_i$, $i\in \{ j:\eta_{m-j+1}\not=0\}$ (i.e., the columns of $\tilde M$ are (from right to left) those columns $c_i$ of $M$ such that $c_i\not \in <c_{i+1},\ldots , c_m>$). Write $f_1,\ldots , f_m,$ for  the rows of $\tilde M$. We define the numbers $\tau_j$, $j=1,\ldots , m-1,$ by
\begin{equation*}\label{mj}
\tau_j=\begin{cases} m-j,& \mbox{if \;\;$f_{j} \in <f_{j+1},\ldots , f_m>$,}\\ 0,&\mbox{if \;\;$f_{j}\not \in <f_{j+1},\ldots , f_m>$.}\end{cases}
\end{equation*}
The \emph{$\gamma$-weighted rank} of the matrix $M$, $\gwr (M)$ in short, is then defined by
$$
\gwr(M)=\sum_{j=1}^m \eta_j+\sum_{j=1}^{m-1}\tau_j-\binom{m}{2}.
$$
\end{definition}
We then have the following
\begin{corollary}\label{cori1}
With the assumptions of Theorem \ref{mainth2}, the minimal order of the differential operators having the orthogonal polynomials $(q_n)_n$ as eigenfunctions is at most $2(\bwr (\bm M)+\awr (\bm N)+1)$.
\end{corollary}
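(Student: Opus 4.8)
The plan is to read the order of the differential operator directly off the explicit construction carried out in the proof of Theorem \ref{mainth2}, and then to recognise the resulting degree count as the weighted rank of Definition \ref{wrM}. In the $\D$-operator formalism, the polynomials $(q_n)_n$ of \eqref{iquss} are produced from the Jacobi family $(J_n^{\alpha,\beta})_n$ by combining finitely many $\D$-operators attached to the two endpoints $-1$ and $+1$; the order of the differential operator for which $(q_n)_n$ are eigenfunctions equals twice the degree in $n$ of an auxiliary polynomial built from the eigenvalue sequences of those $\D$-operators together with the coefficients coming from the sequences $z_l$ of \eqref{ri1} and \eqref{ri2}. Since the two endpoints contribute independently to this degree, I would treat the $-1$ part (governed by $\bm M$) and the $+1$ part (governed by $\bm N$) separately and add them, the additive constant $1$ accounting for the second-order Jacobi operator $D_{\alpha,\beta}$ of \eqref{dopJac} itself, and the overall factor $2$ for the even order forced by symmetry with respect to the measure.

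First I would isolate, for the $-1$ endpoint, the precise degree in $n$ contributed by each column of $\bm M$. Because the mass term in $z_l$ is a linear combination of the functions $\Gamma^{\beta,\alpha+\beta+i}_{-i,\alpha}(n)/\Gamma(\beta+i+1)$ whose coefficients are read off from the entries $M_{i,j}$, a column $c_{m_1-j+1}$ of $\bm M$ lying in the span of the columns to its right produces a sequence $z_l$ which is, up to lower order, a combination of the ones already accounted for; such a column therefore contributes nothing new to the order. This is exactly what the indicator $\eta_j$ records, the weight $\beta+m_1-j$ measuring the degree of the surviving $\Gamma$-ratio. Passing to the reduced matrix $\tilde{\bm M}$ of linearly independent columns, a row dependence then permits one further lowering of the degree of the associated auxiliary polynomial, which is precisely the content of the numbers $\tau_j$. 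Summing $\sum_j\eta_j+\sum_j\tau_j$ and subtracting the normalising $\binom{m_1}{2}$ yields exactly $\bwr(\bm M)$ as the degree contribution of the $-1$ endpoint; the same argument with the roles of $\alpha,\beta$ interchanged and the entries $N_{i,j}$ in place of $M_{i,j}$ gives $\awr(\bm N)$ for $+1$.

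The main obstacle, and the step I would spend most care on, is the dictionary between the linear-algebraic data of $\bm M$ and $\bm N$ (spans of columns and of rows) and the analytic degree count of the $\D$-operator construction: one must verify that a column lying in the span of those to its right really lowers the order by the full weight $\beta+m_1-j$, and that after reducing to $\tilde{\bm M}$ a row dependence lowers it by a further $m_1-j$, with no hidden cancellations or over-counting among the overlapping binomial factors $\binom{m_2}{j-l}$ that appear in \eqref{ri1}. Once this correspondence is established term by term, the operator produced in Theorem \ref{mainth2} has order at most $2(\bwr(\bm M)+\awr(\bm N)+1)$, and since it has $(q_n)_n$ as eigenfunctions the minimal order of such operators is bounded by the same quantity. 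When $\bm M=0$ or $\bm N=0$ the corresponding weighted-rank term drops out, in agreement with the single-endpoint hypotheses of Theorem \ref{mainth2}, which completes the proof.
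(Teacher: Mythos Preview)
The paper's proof of this corollary is a one-line specialisation of Theorem \ref{mainth3}: that theorem produces, for any polynomial $\Xi$ invariant under $\I^{\alpha+\beta-m-1}$, a differential operator $D_S$ of order $\deg\Xi+2(\bwr(\bm M)+\awr(\bm N)+1)$ having $(q_n)_n$ as eigenfunctions; taking $\Xi=1$ gives the bound. All the work is in the order computation inside Theorem \ref{mainth3}, carried out in the Appendix via Lemma \ref{lgp1} (degree of the Casorati quotient $P$), Lemma \ref{l6.2} (degree bound for $M_h$), and Lemma \ref{l6.3} (identifying $\deg P$ with $2(\bwr(\bm M)+\awr(\bm N))$).

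Your proposal is not this one-line deduction; it is an attempt to reprove the order statement of Theorem \ref{mainth3} directly. The intuition you describe for why the weighted rank appears --- column independence in $\bm M$ governing the $\eta_j$, row independence in $\tilde{\bm M}$ governing the $\tau_j$ --- is correct in outline, and is indeed the content of Lemma \ref{l6.3}. But there is a genuine gap: you explicitly flag the ``main obstacle'' (showing that a column dependence lowers the order by exactly $\beta+m_1-j$, and a row dependence by a further $m_1-j$, without unexpected cancellations) and then do not carry it out. That obstacle \emph{is} the proof. In the paper it is handled not by a direct column-by-column argument but by first reducing to polynomials $\hat Y_i$ of pairwise distinct degrees (Lemma \ref{l6.1}), then invoking the exact degree formula of Lemma \ref{lgp1} for such families, and finally comparing the degrees of $T_1=\tfrac12 P_S(D_p)$ and $T_2=\sum_h\tilde M_h(D_p)\D_hY_h(D_p)$ via Lemma \ref{l6.2}.

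Two of your heuristics are also off. The additive $+1$ does not come from $D_{\alpha,\beta}$ being second order; it arises because $S(x)\Omega(x)=\sigma_{x-\frac{m-1}{2}}\Xi(x)P(x)$ carries an extra linear factor $\sigma$, and the passage from $S\Omega$ to $P_S$ via \eqref{Pdiff2} raises the degree by one more, giving $2\deg P_S=\deg\Xi+d+2$ with $d=\deg P$. The overall factor $2$ is not ``symmetry'' but the order of $D_p$: $D_S$ is built as a polynomial in $D_p$. Finally, the claim that the two endpoints ``contribute independently'' is not automatic, since the Casorati determinant $\Omega$ mixes all $m$ rows; the additive split over $U_1$ and $U_2$ is a consequence of the block structure established in Lemma \ref{lgp1}, which you would need to invoke or reprove.
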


Actually, for any nonnegative integer $l\ge 0$, we will construct
a differential operator of order $2(l+\bwr (\bm M)+\awr (\bm
N)+1)$ for which the orthogonal polynomials $(q_n)_n$ are
eigenfunctions (see Theorem \ref{mainth3}). We have computational
evidences showing that, except for special values of the
parameters $\alpha$ and $\beta$ and the matrices $\bm M$ and $\bm
N$, the minimum order of a differential operator having the
orthogonal polynomials $(q_n)_n$ as eigenfunctions seems to be
$2(\bwr (\bm M)+\awr (\bm N)+1)$. However this is not true in
general. For instance, when $\alpha=\beta$, $m_1=m_2=1$ and $\bm
M=\bm N$, we have that $2(\bwr (\bm M)+\awr (\bm N)+1)=4\alpha
+2\; (\alpha\geq1).$ But Koekoek found a differential operator of
order $2\alpha +2 \; (\alpha\geq1)$ for which these Gegenbauer
type orthogonal polynomials are eigenfunctions (see \cite{koe}).
However, we will show that our method can be adapted to this and
other special cases and provides differential operators of order
lower than $2(\bwr (\bm M)+\awr (\bm N)+1)$.


We finish pointing out that, as explained above, the approach of this paper is the same as in \cite{ddI1} for discrete Laguerre-Sobolev orthogonal polynomials. Since we work here with two matrices $\bm M$ and $\bm N$ (instead of only one matrix as in \cite{ddI1}), and the sequence of eigenvalues for the Jacobi polynomials (\ref{autov}) is a quadratic polynomial in $n$, the computations are technically more involved. Therefore, we will omit some proofs which are too similar to the corresponding ones in \cite{ddI1}.

\section{Preliminaries and proof of Theorem \ref{mainth1}}\label{sec2}
We say that a sequence of polynomials $(q_n)_n$, with $\deg(q_n)=n,n\ge 0$, is (left) orthogonal with respect to a bilinear form $B$ (not necessarily symmetric) defined in the linear space of real polynomials if $B(q_n,q)=0$ for all
polynomials $q$ with $\deg(q)<n$ and $B(q_n,q_n)\not
=0$. It is clear from the definition that (left) orthogonal polynomials with respect to a bilinear form, if they exist, are unique up to multiplication by nonzero constants. Given a measure $\nu$ (positive or not), with finite moments of any order, we consider the bilinear form $B_\nu (p,q)=\int pqd\nu$. We then say that a sequence of polynomials $(q_n)_n$, with $\deg(q_n)=n, n\ge 0$, is orthogonal with respect to the measure $\nu$ if it is orthogonal with respect to the bilinear form $B_\nu$.

We will use the following lemma to construct (left) orthogonal
polynomials with respect to a discrete Sobolev bilinear form with two nodes. This result is an extension of the Lemma 2.1 of  \cite{ddI1}.

\begin{lemma}\label{LemJacSob}
For $m_1,m_2\geq0$ with $m=m_1+m_2\geq1$, let $\bm M=(M_{i,j})_{i,j=0}^{m_1-1}$ and $\bm N=(N_{i,j})_{i,j=0}^{m_2-1}$ be $m_1\times m_1$ and $m_2\times m_2$ matrices, respectively. For a given measure $\nu$ and for a couple of real numbers $\lambda$ and $\mu$ ($\lambda\neq\mu$) consider the discrete Sobolev bilinear form defined by
\begin{equation}\label{DSip}
\langle p,q\rangle=\int p(x)q(x)d\nu(x)+\mathbb{T}^{m_1}_{\lambda}(p)\bm M\mathbb{T}^{m_1}_{\lambda}(q)^T+\mathbb{T}^{m_2}_{\mu }(p)\bm N\mathbb{T}^{m_2}_{\mu }(q)^T,
\end{equation}
where for a nonnegative integer $k$, a real number $\lambda$ and a polynomial $p$, we define
$\mathbb{T}^k_\lambda(p)=\left(p(\lambda),p'(\lambda),\ldots,p^{(k-1)}(\lambda)\right)$.
Assume that the measure $(x-\lambda)^{m_1}(\mu-x)^{m_2}\nu$ has a sequence
$(p_n)_n$ of orthogonal polynomials, and write
\begin{equation*}
w_{n,i}^1=\int(x-\lambda)^i(\mu-x)^{m_2}p_{n}(x)d\nu,\quad
w_{n,j}^2=\int(x-\lambda)^{m_1}(\mu-x)^j p_{n}(x)d\nu.
\end{equation*}
For $l=1,\ldots,m_1,$
define the sequences $(R_l(n))_n$ by
\begin{equation}\label{RdSip}
R_l(n)=w_{n,l-1}^1+\sum_{i=0}^{m_1-1}\left(\sum_{j=l}^{(l+m_2)\wedge m_1} \frac{(j-1)!\binom{m_2}{j-l}M_{i,j-1}}{(-1)^{m_2}(\lambda-\mu)^{j-l-m_2}}\right)p_n^{(i)}(\lambda),
\end{equation}
where $\wedge$ denotes the minimum between two numbers. For $l=m_1+1,\ldots,m,$ define the sequences $(R_l(n))_n$ by
\begin{equation}\label{RdSip2}
R_l(n)=w_{n,l-m_1-1}^2+\sum_{i=0}^{m_2-1}\left(\sum_{j=l-m_1}^{l\wedge m_2} \frac{(j-1)!\binom{m_1}{l-j}N_{i,j-1}}{(-1)^{l-m_1-1}(\mu-\lambda)^{j-l}}\right)p_n^{(i)}(\mu).
\end{equation}
Then the following conditions are equivalent:
\begin{enumerate}
\item For $n\ge m$, the discrete Sobolev bilinear form (\ref{DSip}) has an (left) orthogonal polynomial $q_n$ with $\deg (q_n)=n$ and nonzero norm.
\item The $m\times m$ Casorati determinant $\Lambda (n)=\det (R_i(n-j))_{i,j=1}^m$ does not vanish for $n\geq m$.
\end{enumerate}
Moreover, if one of these properties holds, the polynomial defined
by
\begin{equation*}\label{quss}
q_n(x)=\begin{vmatrix}
               p_n(x) & p_{n-1}(x) & \cdots & p_{n-m}(x) \\
R_1(n) &R_1(n-1) & \cdots & R_1(n-m) \\
               \vdots & \vdots & \ddots & \vdots \\
               R_m(n) &  R_m(n-1) & \cdots & R_m(n-m)
             \end{vmatrix},
\end{equation*}
has degree $n$, $n\ge m$, and the sequence $(q_n)_n$ is (left)
orthogonal with respect to (\ref{DSip}).
\end{lemma}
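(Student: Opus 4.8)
The plan is to follow the one-node argument of \cite{ddI1}, the only genuinely new feature being the second mass point $\mu$ together with its block $\bm N$. Write $\mathcal P_k$ for the polynomials of degree $\le k$ and set $\tilde\nu=(x-\lambda)^{m_1}(\mu-x)^{m_2}\nu$, so that $(p_n)_n$ is the orthogonal sequence for $\tilde\nu$. The heart of the matter is to split the single requirement $\langle q_n,q\rangle=0$ for all $\deg q<n$ into two independent families by Hermite interpolation at the two nodes. Every $q$ with $\deg q<n$ factors uniquely as $q=H+(x-\lambda)^{m_1}(\mu-x)^{m_2}g$, where $H\in\mathcal P_{m-1}$ is the Hermite interpolant matching the jets $\mathbb{T}^{m_1}_\lambda(q)$ and $\mathbb{T}^{m_2}_\mu(q)$, and $\deg g<n-m$. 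As $q$ runs over $\mathcal P_{n-1}$ the pair $(H,g)$ runs \emph{independently} over $\mathcal P_{m-1}\times\mathcal P_{n-m-1}$, since the jet map restricted to $\mathcal P_{m-1}$ is an isomorphism. Because the factor $(x-\lambda)^{m_1}(\mu-x)^{m_2}$ annihilates both mass-point terms, testing against the $g$-part collapses to $\int q_n g\,d\tilde\nu=0$, forcing $q_n$ to be $\tilde\nu$-orthogonal to $\mathcal P_{n-m-1}$, i.e. $q_n\in\operatorname{span}\{p_{n-m},\ldots,p_n\}$.

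The remaining $m$ conditions come from testing against a basis of $\mathcal P_{m-1}$, and I would use the adapted basis $H_l=(x-\lambda)^{l-1}(\mu-x)^{m_2}$ for $1\le l\le m_1$ and $H_l=(x-\lambda)^{m_1}(\mu-x)^{l-1-m_1}$ for $m_1<l\le m$; their jets are linearly independent, so they span a complement of $(x-\lambda)^{m_1}(\mu-x)^{m_2}\mathcal P_{n-m-1}$. The point of this choice is that $\langle p_k,H_l\rangle$ reproduces exactly $R_l(k)$: the measure part is the weight $w^1_{k,l-1}$ (and $w^2$ for $l>m_1$), one mass-point block drops out (the factor $(\mu-x)^{m_2}$ kills the $\mu$-jet when $l\le m_1$, and $(x-\lambda)^{m_1}$ kills the $\lambda$-jet when $l>m_1$), and the surviving block produces the correction sum. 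Matching is a Leibniz computation, $H_l^{(i)}(\lambda)=\binom{i}{l-1}(l-1)!\,\frac{d^{\,i-l+1}}{dx^{\,i-l+1}}(\mu-x)^{m_2}\big|_{\lambda}$, whose binomial and $(\lambda-\mu)$-power factors are precisely those in \eqref{RdSip}, \eqref{RdSip2}. Hence the conditions read $\sum_{k=n-m}^n c_k R_l(k)=0$, $l=1,\ldots,m$; the stated determinant is the Cramer solution of this homogeneous system, its first-row expansion exhibits $q_n\in\operatorname{span}\{p_{n-m},\ldots,p_n\}$ with the coefficient of $p_n$ equal to $\Lambda(n)$ (so $\deg q_n=n\iff\Lambda(n)\neq0$), and a two-equal-rows argument shows each condition holds automatically.

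For the norm I would apply the same splitting to $p_n$ itself: writing $p_n=(x-\lambda)^{m_1}(\mu-x)^{m_2}g_n+h_n$ with $\deg h_n<m$, left-orthogonality kills $h_n$ and gives $\langle q_n,q_n\rangle=\Lambda(n)\langle q_n,p_n\rangle=\Lambda(n)\int q_n g_n\,d\tilde\nu$. Only the $p_{n-m}$-component survives the last integral, and since the $p_{n-m}$-coefficient of the determinant is $(-1)^m\Lambda(n+1)$, one gets the clean identity $\langle q_n,q_n\rangle=\kappa_n\,\Lambda(n)\Lambda(n+1)$ with an explicit nonzero $\kappa_n$ built from the leading coefficients of $p_n,p_{n-m}$ and $\|p_{n-m}\|^2_{\tilde\nu}$. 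From this the implication (2)$\Rightarrow$(1) is immediate: $\Lambda(n)\neq0$ for all $n\ge m$ makes $q_n$ have degree $n$ and nonzero norm.

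The direction (1)$\Rightarrow$(2) is where the real care is needed, and is the step I expect to be the main obstacle. The determinant polynomial is \emph{always} left-orthogonal to $\mathcal P_{n-1}$, so by uniqueness of left-orthogonal polynomials the polynomial furnished by (1) must be proportional to it whenever the latter has degree $n$, and then the norm identity forces $\Lambda(n)\Lambda(n+1)\neq0$. The delicate point is to exclude the degenerate case in which the space of degree-$\le n$ polynomials left-orthogonal to $\mathcal P_{n-1}$ has dimension $\ge2$, for there the Casorati determinant may vanish while a nonzero-norm orthogonal polynomial still exists at that isolated level. I would control this by descending through the levels: a dimension drop at level $n$ produces a lower-degree element of zero norm, which through the norm identity and uniqueness contradicts the existence of the nondegenerate orthogonal polynomials of lower degree guaranteed by (1); one must also check the descent does not escape below level $m$. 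Organizing this bookkeeping, heavier here than in \cite{ddI1} because of the two matrices $\bm M$ and $\bm N$, is the technical crux of the proof.
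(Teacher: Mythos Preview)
Your proposal is correct and follows essentially the same approach as the paper: the paper's proof simply says it proceeds along the lines of Lemma~2.1 of \cite{ddI1} with the powers $(x-\lambda)^l$ replaced by exactly your adapted basis $b_l(x)=(x-\lambda)^{l-1}(\mu-x)^{m_2}$ for $l\le m_1$ and $b_l(x)=(x-\lambda)^{m_1}(\mu-x)^{l-m_1-1}$ for $l>m_1$, then computes the jets $\mathbb{T}^{m_1}_\lambda(b_l)$ and $\mathbb{T}^{m_2}_\mu(b_l)$ to recover \eqref{RdSip}--\eqref{RdSip2}, and notes that the $b_l$ are independent precisely when $\lambda\neq\mu$.

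One small correction of perspective: you flag the direction (1)$\Rightarrow$(2) as the ``technical crux'' and expect the bookkeeping to be heavier than in \cite{ddI1} because of the two matrices. It is not. Once you have established $R_l(k)=\langle p_k,b_l\rangle$, the entire equivalence argument is \emph{identical} to the one-node case: it works with the abstract data $(p_n)_n$, $\tilde\nu$, and the numbers $R_l(k)$, and never again sees $\bm M$, $\bm N$, or the number of mass points. The paper accordingly treats this direction as routine and devotes no separate discussion to it; the only genuinely new content is the jet computation for the two-node basis, which you have already outlined correctly.
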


\begin{proof}
We proceed in the same lines as the proof of Lema 2.1 in \cite{ddI1}, but changing the powers $(x-\lambda)^l$, $l=1,\ldots ,m$, to
$(x-\lambda)^{i-1}(\mu-x)^{m_2}$, $i=1,\ldots ,m_1$ and $(x-\lambda)^{m_1}(\mu-x)^{j-1}$, $j=1,\ldots ,m_2$.
We only have to explain how to find the identities (\ref{RdSip}) and (\ref{RdSip2}) for the sequences $R_l(n)$, $l=1,\ldots,m.$

For $l=1,\ldots,m_1,$ we use $q(x)=(x-\lambda)^{l-1}(\mu-x)^{m_2}.$ Then, it is straightforward to see that every component $i$ of the vector $\mathbb{T}^{m_1}_{\lambda}(q)$ is given by
$$
[\mathbb{T}^{m_1}_{\lambda}(q)]_i=\begin{cases}
\frac{(i-1)!\binom{m_2}{i-l}}{(-1)^{m_2}(\lambda-\mu)^{i-l-m_2}},&\mbox{if}\quad i=l,\ldots,(l+m_2)\wedge m_1,\\
0&\mbox{if}\quad i=1,\ldots,l-1,\quad\mbox{or}\quad i=l+m_2+1,\ldots,m_1,
\end{cases}
$$
while $\mathbb{T}^{m_2}_{\mu }=(0,\ldots,0)$. Hence, if we write
$$
q_n(x)=\sum_{j=0}^m\beta_{n,j}p_{n-j}(x),
$$
where $\beta_{n,0}=1$, we get \eqref{RdSip} by evaluating the inner product $\langle q_n, (x-\lambda)^{l-1}(\mu-x)^{m_2}\rangle=0,$ for $l=1,\ldots,m_1.$

Similar for $l=m_1+1,\ldots,m,$ where now we use $q(x)=(x-\lambda)^{m_1}(\mu-x)^{l-m_1-1}.$ Then
$$
[\mathbb{T}^{m_2}_{\mu }(q)]_i=\begin{cases}
\frac{(i-1)!\binom{m_1}{l-i}}{(-1)^{l-m_1-1}(\mu-\lambda)^{i-l}},&\mbox{if}\quad i=l-m_1,\ldots,l\wedge m_2,\\
0&\mbox{if}\quad i=1,\ldots,l-m_1-1,\quad\mbox{or}\quad i=l+1,\ldots,m_2,
\end{cases}
$$
while $\mathbb{T}^{m_1}_{\lambda}(q)=(0,\ldots,0)$. Evaluating $\langle q_n, (x-\lambda)^{m_1}(\mu-x)^{l-m_1-1}\rangle=0,$ for $l=m_1+1,\ldots,m,$ we get \eqref{RdSip2}.

Finally, observe that the set of polynomials
\begin{equation}\label{base}
b_l(x)=\begin{cases}
(x-\lambda)^{l-1}(\mu-x)^{m_2},& l=1,\ldots,m_1,\\
(x-\lambda)^{m_1}(\mu-x)^{l-m_1-1}, &l=m_1+1,\ldots,m,
\end{cases}
\end{equation}
is linearly independent and has dimension exactly $m$ only when $\lambda\neq\mu$.
\end{proof}

We also need the following combinatorial identities (which can be proved using standard combinatorial techniques).

\begin{lemma}\label{comb} Let $\alpha,\beta$ be non-integers real numbers, such that $\alpha+\beta$ is not an integer. Let $m_1,m_2$ be nonnegative integers
with $m_1+m_2\geq1$ and write $m=m_1+m_2$.
\begin{enumerate}
\item For $k=1,\ldots,m_1-h-1$, $h=0,\ldots, m_1-2$,
\begin{equation}\label{comb1}
\sum_{l=0}^{m_1-1}\frac{(-1)^l\binom{h}{m_1-l}\binom{l-k}{l}}{2^l(\beta-l)\binom{\alpha+\beta-k-l}{\alpha-k}}=0.
\end{equation}
\item For $k=1,\ldots,m-1$,
\begin{equation}\label{comb2}
(-1)^k\sum_{l=0}^{m_1-1}\frac{\binom{m-l-2}{m_2-1}\binom{l-k}{l}}{(\beta-l)\binom{\alpha+\beta-k-l}{\alpha-k}}
+\sum_{l=0}^{m_2-1}\frac{\binom{m-l-2}{m_1-1}\binom{l-k}{l}}{(\alpha-l)\binom{\alpha+\beta-k-l}{\beta-k}}=0.
\end{equation}
\end{enumerate}
\end{lemma}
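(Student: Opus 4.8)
The plan is to handle the two identities \eqref{comb1} and \eqref{comb2} separately, after two common preliminary reductions. First I would rewrite every generalized binomial as a Pochhammer symbol: by upper negation $\binom{l-k}{l}=(-1)^l\binom{k-1}{l}=(1-k)_l/l!$ (valid for integers $k\ge1$, $l\ge0$), which already shows that only the indices $0\le l\le k-1$ contribute; and
\[
\binom{\alpha+\beta-k-l}{\alpha-k}=\frac{\Gamma(\alpha+\beta-k-l+1)}{\Gamma(\alpha-k+1)\,\Gamma(\beta-l+1)},
\]
so that the $l$-dependent denominators collapse: $\Gamma(\beta-l)/\Gamma(\alpha+\beta-k-l+1)$ equals $\tfrac{\Gamma(\beta)}{\Gamma(\alpha+\beta-k+1)}\,(k-\alpha-\beta)_l/(1-\beta)_l$, and symmetrically with $\alpha$ and $\beta$ interchanged. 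The hypotheses that $\alpha$, $\beta$, $\alpha+\beta\notin\ZZ$ guarantee that all these Gamma factors are finite and nonzero.

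For \eqref{comb1} this reduction finishes the proof outright: the factor $\binom{h}{m_1-l}$ is nonzero only for $m_1-h\le l\le m_1$, while $\binom{l-k}{l}=(-1)^l\binom{k-1}{l}$ is nonzero only for $0\le l\le k-1$ (the factor $2^{l}$ is irrelevant to this). Since the stated range of $k$ forces $k-1\le m_1-h-2<m_1-h$, these two supports are disjoint, so every summand vanishes identically and the sum is zero term by term.

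For \eqref{comb2} I would write the left-hand side as $T(m_1,m_2;k)$ and prove $T\equiv0$ by induction on $m=m_1+m_2$. The structural engine is a Pascal recurrence: applying $\binom{m-l-2}{m_2-1}=\binom{m-l-3}{m_2-1}+\binom{m-l-3}{m_2-2}$ to the first sum and the analogous splitting of $\binom{m-l-2}{m_1-1}$ to the second, and noting that the extra boundary terms at $l=m_1-1$ and $l=m_2-1$ vanish because $\binom{m_2-2}{m_2-1}=\binom{m_1-2}{m_1-1}=0$, yields
\[
T(m_1,m_2;k)=T(m_1-1,m_2;k)+T(m_1,m_2-1;k),
\]
an identity valid for every $k\ge1$ (the common prefactor $(-1)^k$ distributes, since it depends only on $k$). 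The base cases $m_1=0$ or $m_2=0$ are trivial, since then one binomial is $\binom{\cdot}{-1}=0$. In the inductive step, for $1\le k\le m-2$ both terms on the right have total index $m-1$ with $k\le(m-1)-1$, so they vanish by the inductive hypothesis.

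The one case the recurrence does not settle is the boundary $k=m-1$, and this is where the real work lies (indeed the identity fails at $k=m$, so the stated range is sharp, and the inductive hypothesis genuinely does not reach $k=m-1$). Here I would instead evaluate $T(m_1,m_2;m-1)$ in closed form. At $k=m-1$ one has $(1-k)_l/l!=(-1)^l\binom{m-2}{l}$, and the subset identity $\binom{m-2}{l}\binom{m-l-2}{m_2-1}=\binom{m-2}{m_2-1}\binom{m_1-1}{l}$ converts the troublesome $l$-dependent binomial into a constant times $\binom{m_1-1}{l}$; the surviving sum is a terminating ${}_2F_1$ at argument $1$, which Chu--Vandermonde evaluates as $(\alpha-m+2)_{m_1-1}/(1-\beta)_{m_1-1}$ (and symmetrically for the second sum). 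Collapsing the resulting products $\Gamma(\alpha-m+2)(\alpha-m+2)_{m_1-1}=\Gamma(\alpha-m_2+1)$ and simplifying the remaining quotients with the reflection formula $\Gamma(z)\Gamma(1-z)=\pi/\sin\pi z$, one finds the two terms carry signs $(-1)^{m_1}$ and $(-1)^{m_1-1}$ with equal magnitude, so they cancel. I expect this boundary evaluation---spotting the subset identity that unlocks Chu--Vandermonde and then matching the two halves through the reflection formula---to be the main obstacle; everything else is bookkeeping on the Pascal recurrence.
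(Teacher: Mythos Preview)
Your argument is correct. The paper does not actually prove this lemma: it is stated with the parenthetical remark that it ``can be proved using standard combinatorial techniques,'' and no details are given. So there is no paper proof to compare against; what matters is whether your approach stands on its own, and it does.

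For part~(1) your observation is in fact sharper than ``standard techniques'' would suggest: you notice that the two binomial factors $\binom{h}{m_1-l}$ and $\binom{l-k}{l}$ have disjoint supports in $l$ once $k\le m_1-h-1$, so the identity holds term by term. This is the cleanest possible argument.

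For part~(2) the Pascal-recurrence induction $T(m_1,m_2;k)=T(m_1-1,m_2;k)+T(m_1,m_2-1;k)$ is a good organizing device, and the verification of the vanishing boundary terms at $l=m_1-1$ and $l=m_2-1$ is correct. Your handling of the boundary case $k=m-1$ via the subset identity $\binom{m-2}{l}\binom{m-l-2}{m_2-1}=\binom{m-2}{m_2-1}\binom{m_1-1}{l}$ followed by Chu--Vandermonde is also right. One minor remark: the reflection formula is not actually needed in the final step. After Chu--Vandermonde, both halves reduce to the common factor
\[
\binom{m-2}{m_1-1}\,\frac{\Gamma(\alpha-m_2+1)\,\Gamma(\beta-m_1+1)}{\Gamma(\alpha+\beta-m+2)}
\]
(using $\binom{m-2}{m_2-1}=\binom{m-2}{m_1-1}$), and the signs work out to $(-1)^{m_2}$ and $(-1)^{m_2-1}$, which cancel directly. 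Your stated signs $(-1)^{m_1}$, $(-1)^{m_1-1}$ differ from these by a common factor $(-1)^{m}$, which is harmless; either way the two halves cancel. So you can drop the appeal to $\Gamma(z)\Gamma(1-z)=\pi/\sin\pi z$ and shorten the endgame.
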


We are now ready to prove Theorem \ref{mainth1} in the Introduction.
\begin{proof}[Proof of Theorem \ref{mainth1}]
We proceed in two steps.

\medskip
\noindent
\textit{First step. Assume $n\ge m$.}
Notice that for $n\ge m$, $p(n)q(n)\not=0$, where $p$ and $q$ are the polynomials given by (\ref{def1pi}) and (\ref{def1qi}).
Actually, we can remove the normalization $1/(p(n)q(n))$ from the definition of the polynomials $q_n$ (as we will see below, this normalization is going to be useful only for some instances of $\alpha$ and $\beta$ when $n=0,\ldots, m-1$).

For $n\ge m$, the theorem is a direct consequence of Lemma \ref{LemJacSob} for $\lambda=-1$, $\mu=1$, the Jacobi measure $\nu=\mu_{\alpha-m_2,\beta-m_1}$ (\ref{pJac})
and the Jacobi polynomials $p_n=J_n^{\alpha,\beta}$.

Indeed, we use the following expansions (see Theorem 3.21, p.76 of \cite{STW}, after some computations using Pochhammer symbol properties):
\begin{align*}
J_n^{\alpha,\beta}(x)&=\sum_{k=0}^n \left[\frac{2k+\alpha+\gamma+1}{k+\alpha+\gamma+1}\frac{\binom{n+\alpha+\beta}{\alpha}\binom{\alpha+\gamma}{\gamma}\binom{n+\alpha}{n-k}\binom{n+\alpha+\beta+k}{k}\binom{n-k+\beta-\gamma-1}{n-k}}{\binom{\alpha+\beta}{\beta}\binom{k+\alpha+\gamma}{\alpha}\binom{n}{k}\binom{n+\alpha+\gamma+k+1}{n}}\right]J_k^{\alpha,\gamma}(x),\\
J_n^{\alpha,\beta}(x)&=\sum_{k=0}^n\left[\frac{2k+\beta+\gamma+1}{k+\beta+\gamma+1}\frac{\binom{n+\alpha+\beta}{\alpha}\binom{\beta+\gamma}{\gamma}\binom{n+\beta}{n-k}\binom{n+\alpha+\beta+k}{k}\binom{n-k+\alpha-\gamma-1}{n-k}}{(-1)^{n-k}\binom{\alpha+\beta}{\beta}\binom{k+\beta+\gamma}{\gamma}\binom{n}{k}\binom{n+\beta+\gamma+k+1}{n}}\right] J_k^{\gamma,\beta}(x).
\end{align*}
Therefore we get for $l=1,\ldots, m_1$,
\begin{align*}
w_{n,l-1}^1&=\int (x+1)^{l-1}J_n^{\alpha,\beta}(x)\mu_{\alpha,\beta-m_1}(x)dx\\
&=\int J_n^{\alpha,\beta}(x)\mu_{\alpha,\beta-m_1+l-1}(x)dx\\
&=\frac{2^{\alpha+\beta-m_1+l}\Gamma (\beta+1)\Gamma(\beta-m_1+l)\Gamma^{\alpha,\alpha+\beta}_{\beta,\alpha+\beta+l-m_1}(n)} {\Gamma(\alpha+\beta+1)}\binom{n+m_1-l}{n},
\end{align*}
(where we are using the notation (\ref{gammas})), and for $l=m_1+1,\ldots, m$,
\begin{align*}
w_{n,l-m_1-1}^2&=\int (1-x)^{l-m_1-1}J_n^{\alpha,\beta}(x)\mu_{\alpha-m_2,\beta}(x)dx\\
&=\int J_n^{\alpha,\beta}(x)\mu_{\alpha+l-m-1,\beta}(x)dx\\
&=\frac{(-1)^n2^{\alpha+\beta-m+l}\Gamma(n+\alpha+\beta+1)\Gamma(\beta+1)\Gamma(\alpha-m+l)}{\Gamma(\alpha+\beta+1)\Gamma(\alpha+\beta+n+l-m+1)}\binom{n+m-l}{n}.
\end{align*}
We also need the following identities (after a combination of formulas (3.94), (3.100) and (3.107) of \cite{STW}):
\begin{align*}
\left(J_n^{\alpha,\beta}\right)^{(i)}(-1)&=\frac{(-1)^{i}i!}{2^i\binom{\alpha+\beta}{\beta}}\binom{n+\alpha+\beta}{\alpha}\binom{n+\beta}{n-i}\binom{n+\alpha+\beta+i}{i},\\
\left(J_n^{\alpha,\beta}\right)^{(i)}(1)&=\frac{(-1)^ni!}{2^i\binom{\alpha+\beta}{\beta}}\binom{n+\alpha+\beta}{\alpha}\binom{n+\alpha}{n-i}\binom{n+\alpha+\beta+i}{i}.
\end{align*}
If we replace these identities in \eqref{RdSip} and \eqref{RdSip2}, we get, after straightforward computations, the expressions
\begin{align}\label{ri12}
R_l(n)&=\frac{\Gamma(\beta+1)\Gamma(n+\alpha+1)}{\Gamma(\alpha+\beta+1)\Gamma(n+\beta+1)}\left[\frac{2^{\alpha+\beta-m_1+l}\Gamma(\beta-m_1+l)}{(m_1-l)!} \Gamma^{m_1-l,\alpha+\beta}_{0,\alpha+\beta-m_1+l}(n)\right.\\
\nonumber &\qquad\qquad\qquad\left.+2^{m_2}\sum_{i=0}^{m_1-1}\left(\sum_{j=l}^{(l+m_2)\wedge m_1}\frac{(j-1)!\binom{m_2}{j-l}M_{i,j-1}}{(-2)^{i+j-l}}\right)
\frac{\Gamma^{\beta,\alpha+\beta+i}_{-i,\alpha}(n)}{\Gamma(\beta+i+1)}\right],
\end{align}
for $l=1,\ldots, m_1$, and for $l=m_1+1,\ldots, m$,
\begin{align}\label{ri22}
R_l(n)&=\frac{(-1)^n\Gamma(\beta+1)}{\Gamma(\alpha+\beta+1)}\left[\frac{2^{\alpha+\beta-m+l}\Gamma(\alpha-m+l)}{(m-l)!}
\Gamma^{m-l,\alpha+\beta}_{0,\alpha+\beta-m+l}(n)\right.\\
\nonumber &\qquad\qquad \left.+\sum_{i=0}^{m_2-1}\left(\sum_{j=l-m_1}^{l\wedge m_2}\frac{(j-1)!\binom{m_1}{l-j}N_{i,j-1}}{(-1)^{l-m_1-1}2^{i+j-l}}\right)
\frac{\Gamma^{\alpha,\alpha+\beta+i}_{-i,\beta}(n)}{\Gamma(\alpha+i+1)}\right].
\end{align}
Comparing (\ref{ri12}) and (\ref{ri22}) with (\ref{ri1}) and (\ref{ri2}), we get straightforwardly
\begin{equation*}
R_l(n)=\begin{cases}
\displaystyle\frac{\Gamma(\beta+1)\Gamma(n+\alpha+1)}{\Gamma(\alpha+\beta+1)\Gamma(n+\beta+1)}z_l(n),&\quad l=1,\ldots, m_1,\\
\displaystyle\frac{(-1)^n\Gamma(\beta+1)}{\Gamma(\alpha+\beta+1)}z_l(n),&\quad l=m_1+1,\ldots, m.
\end{cases}
\end{equation*}
And therefore
\begin{equation*}
R_l(n-j)=\begin{cases}
\displaystyle\frac{(-1)^m\Gamma(\beta+1)\Gamma(n+\alpha+1-m)}{\Gamma(\alpha+\beta+1)\Gamma(n+\beta)}(-1)^j\no_{n,j}^lz_l(n-j),&\quad l=1,\ldots, m_1,\\
\displaystyle\frac{(-1)^n\Gamma(\beta+1)}{\Gamma(\alpha+\beta+1)}(-1)^j\no_{n,j}^lz_l(n-j),&\quad l=m_1+1,\ldots, m,
\end{cases}
\end{equation*}
where $\no_{n,j}^l, l=1,\ldots, m,$ was defined by \eqref{defno}.

Since $n\ge m$, the hypothesis on $\alpha$ and $\beta$ shows that $n+\alpha+1-m,n+\beta\not =0,-1,-2,\ldots$ and hence Theorem \ref{LemJacSob} gives that the polynomial $q_n$, $n\ge m$, is orthogonal with respect to the Jacobi Sobolev inner product defined by (\ref{idslip}).

\medskip
\noindent
\textit{Second step. Assume $n=0,1,\ldots, m-1$.}
When $\alpha$ and $\beta$ are integers, $p(n)q(n)$ can vanish for some $n=0,\ldots, m-1$. Hence, we first prove that even if for some $n=0,\ldots, m-1$, $p(n)q(n)=0$, the ratio $\Lambda_n$ (\ref{casdet}) and the polynomial $q_n$ (\ref{iquss}) are well defined (and hence $q_n$ has degree $n$ if and only if $\Lambda_n\not =0$).

Indeed, assume first that $p(n)=0$. From \eqref{def1pi} we have that $n$ should be either $n=-\alpha+m-i$ or $n=-\beta +i$, $i=1,\ldots , m_1-1$. Consider first the case $n=-\alpha+m-i$. Write $\tilde \Lambda $ for the $m\times (m+1)$ matrix function
\begin{equation*}\label{casdet2}
\tilde \Lambda (x)=\begin{pmatrix}
\no^1_{x,0} z_1(x) &\no^1_{x,1} z_1(x-1) & \cdots &  \no^1_{x,m}z_1(x-m) \\
               \vdots & \vdots & \ddots & \vdots \\
\no^m_{x,0} z_m(x) & \no^m_{x,1} z_m(x-1) & \cdots & \no^m_{x,m}z_m(x-m)
             \end{pmatrix}.
\end{equation*}
For $j\ge 1$, the function $\rho^h_{x,j}$, $h=1,\ldots, m_1$, (\ref{defno}) is actually a polynomial: $\no^h_{x,j}=(-1)^{m-j}(x+\alpha-m+1)_{m-j}(x+\beta-j+1)_{j-1}$. Hence $n=-\alpha+m-i$ is  a root of $\no^h_{x,j}$, for $j=0,\ldots , m-i$. It is then easy to see that $\rank \tilde \Lambda(-\alpha+m-i)\le m_2+i$. So for $h=1,\ldots , m+1$, $0$ is an eigenvalue of $\tilde \Lambda_h(-\alpha+m-i)$ of geometric multiplicity at least $m-m_2-i=m_1-i$, where $\tilde \Lambda_h$ is the square matrix obtained by removing the $h$-th column of $\tilde \Lambda$. We then deduce that $0$ is an eigenvalue of $\tilde \Lambda_h(-\alpha+m-i)$ of algebraic multiplicity at least $m_1-i$. This implies that $x=-\alpha+m-i$ is a root of $\det \tilde \Lambda_h(x)$ of multiplicity at least $m_1-i$, which it is precisely the multiplicity of $-\alpha+m-i$ as a zero of $p(x)$. A similar result can be proved for the other zeros of $p$ and the zeros of $q$. This proves that the ratios
$\det \tilde \Lambda_h(x)/(p(x)q(x))$ are well defined even when $p(x)q(x)=0$. Hence for $n=0,\ldots, m-1$,
the ratio  $\Lambda_n$ (\ref{casdet}) and the polynomial $q_n$ (\ref{iquss}) are well defined and $q_n$ has degree $n$ if and only if $\Lambda_n\not =0$.

We now prove that $q_n$ are orthogonal for $n=0,1,\ldots,m-1$. We
need first to introduce  some notation. For $l=1,\ldots, m$, we
define
\begin{equation}\label{defll}
\lambda _l(n)=\begin{cases}\displaystyle \frac{(-1)^m\Gamma(\alpha+\beta+1)\Gamma(\beta+n)}{\Gamma(\beta+1)\Gamma(\alpha+n+1-m)}, & \quad l=1,\ldots , m_1,\\\displaystyle\frac{(-1)^n\Gamma(\alpha+\beta+1)}{\Gamma(\beta+1)}, & \quad l=m_1+1,\ldots , m.
\end{cases}
\end{equation}
Write $F_n(x)$, $f_l(n)$, $l=1,\ldots , m$, for the row vectors of size $m+1$ whose entries are
\begin{equation}\label{defff}
F_{n,j}(x)=(-1)^{j-1}J_{n-j+1}^{\alpha,\beta}(x),\quad f_{l,j}(n)=\rho^l_{n,j-1}z_l(n-j+1)
\end{equation}
for $j=1,\ldots, m+1$ (in particular, $F_{n,j}=0$, $j=n+2,\ldots, m+1$). Hence
$$
q_n(x)=\frac{1}{p(n)q(n)}\begin{vmatrix}F_{n}(x)\\
f_1(n)\\\vdots \\f_m(n)\end{vmatrix}.
$$
Consider the basis  of $\PP _{n-1}$ given by $v_h(x)=(1-x)^h$, $h=0,\ldots , n-1$. Since $\deg(v_h)=h$, it is enough to prove that $\langle q_n,v_h\rangle=0$,
$h=0,\ldots , n-1$. Since $b_l(x)$, $l=1,\ldots m$, (see (\ref{base})) is a basis of $\PP_{m-1}$ and $n-1\le m-1$, we have $v_h(x)=\sum_{l=1}^ma_{h,l}b_l(x)$, and then
\begin{equation}\label{defqv}
\langle q_n,v_h\rangle=\frac{1}{p(n)q(n)}\begin{vmatrix}\left(\displaystyle\sum_{l=1}^ma_{h,l}\langle F_{n,j},b_l\rangle,j=1,\ldots ,m+1\right)\\
f_1(n)\\\vdots \\f_m(n)\end{vmatrix}.
\end{equation}
Notice that $p(n)q(n)$ and each entry of the rows $f_l(n)$
(\ref{defff}) are meromorphic functions of $\alpha$ or $\beta$. It was shown in the first step that
$\int_{-1}^1J_n^{\alpha,\beta}(x)b_l(x)d\mu_{\alpha-m_2,\beta-m_1}(x)dx$
and $(J_n^{\alpha,\beta})^{(i)}(\pm 1)$ are also meromorphic
functions of $\alpha$ or $\beta$, and then $\langle
F_{n,j},b_l\rangle$ is also meromorphic. This shows that $\langle
q_n,v_h\rangle$ is a meromorphic function of $\alpha$ or $\beta$.
It is then enough to prove that $\langle q_n,v_h\rangle=0$
assuming that $\alpha$, $\beta$ and $\alpha+\beta$ are
non-integers real numbers. Hence we have
$p(n)q(n),\lambda_l(n)\not =0$, $l=1,\ldots, m$ (\ref{defll}).

Proceeding as in the proof of the first step, we can prove that if $\deg(b_l(x))<n$ then
\begin{equation}\label{j1}
f_{l,j}(n)=\lambda_l(n)\langle F_{n,j},b_l\rangle,\quad j=1,\ldots, m+1.
\end{equation}
On the other hand, if $\deg(b_l(x))\ge n$ then
\begin{equation}\label{j2}
f_{l,j}(n)=\lambda_l(n)\langle F_{n,j},b_l\rangle,\quad j=1,\ldots, n+1.
\end{equation}
Write $u=(\sum_{l=1}^ma_{h,l}\langle F_{n,j},b_l\rangle,j=1,\ldots ,m+1)$, i.e., $u$ is the first row in the determinant (\ref{defqv}). Since $p(n)q(n),\lambda_l(n)\not =0$, $\langle q_n,v_h\rangle=0$ will follow if we prove that $u=\sum_{l=1}^m\frac{a_{h,l}}{\lambda_l(n)}f_l(n)$, i.e. a linear combination of the rest of the rows.

For $j=1,\ldots , n+1$, we  have from (\ref{j1}) and (\ref{j2})
$$
u_j=\sum_{l=1}^ma_{h,l}\langle F_{n,j},b_l\rangle=\sum_{l=1}^m\frac{a_{h,l}}{\lambda_l(n)}f_{l,j}(n).
$$
For $j=n+2,\ldots , m+1$, $F_{n,j}=0$ (\ref{defff}) and then $u_j=0$. Taking into account the definition of $f_l$ (\ref{defff}), it is then enough to prove that
\begin{equation}\label{rey}
\sum_{l=1}^m\frac{a_{h,l}}{\lambda_l(n)}\rho_{n,j-1}^lz_l(n-j+1)=0,
\end{equation}
for $h=0,\ldots, n-1$, $j=n+2,\ldots , m+1$ and $n=0,\ldots, m-1$.

Since $n-j+1<0$, we have from (\ref{ri1}) and (\ref{ri2})
$$
z_l(n-j+1)=\begin{cases} \displaystyle\frac{\Gamma(\beta-m_1+l)\Gamma^{m_1-l,\alpha+\beta}_{0,\alpha+\beta-m_1+l}(n-j+1)}{2^{-\alpha-\beta+m_1-l}(m_1-l)!}, & \quad l=1,\ldots, m_1,\\
\displaystyle\frac{\Gamma(\alpha-m+l)\Gamma^{m-l,\alpha+\beta}_{0,\alpha+\beta-m+l}(n-j+1)}{2^{-\alpha-\beta+m-l}(m-l)!}
,&\quad l=m_1+1,\ldots, m.
\end{cases}
$$
Using (\ref{defno}) and (\ref{defff}), we get
$$
\frac{\rho_{n,j-1}^lz_l(n-j+1)}{\lambda_l(n)}=\begin{cases} c_n\displaystyle\frac{\Gamma(\beta-m_1+l)\Gamma^{\alpha,m_1-l}_{0,\alpha+\beta-m_1+l}(n-j+1)}
{(-1)^{j-1}2^{m_1-l}(m_1-l)!},&\quad l=1,\ldots, m_1,\\
c_n\displaystyle\frac{\Gamma(\alpha-m+l)\Gamma^{\beta,m-l}_{0,\alpha+\beta-m+l}(n-j+1)}
{(-1)^n2^{m-l}(m-l)!},&\quad l=m_1+1,\ldots, m,
\end{cases}
$$
where $\displaystyle c_n=\frac{2^{\alpha+\beta}\Gamma(\beta+1)\Gamma(n-j+\alpha+\beta+2)}{\Gamma(\alpha+\beta+1)\Gamma(n-j+\beta+2)}$.
Inserting them into (\ref{rey}), we get
\begin{align}\label{bor}
(-1)^{j-1}&\sum_{l=1}^{m_1}\frac{a_{h,l}\Gamma(\beta-m_1+l)\Gamma^{\alpha,m_1-l}_{0,\alpha+\beta-m_1+l}(n-j+1)}
{2^{m_1-l}(m_1-l)!}\\\nonumber
&+(-1)^{n}\sum_{l=1}^{m_2}\frac{a_{h,m_1+l}\Gamma(\alpha-m_2+l)\Gamma^{\beta,m_2-l}_{0,\alpha+\beta-m_2+l}(n-j+1)}
{2^{m_2-l}(m_2-l)!}=0.
\end{align}
If we write $k=-n+j-1$, after straightforward computations we can rewrite (\ref{bor}) in the simpler form
\begin{equation}\label{sfo}
(-1)^k\sum_{l=0}^{m_1-1}\frac{a_{h,m_1-l}\binom{l-k}{l}}{2^l(\beta-l)\binom{\alpha+\beta-k-l}{\alpha-k}}
+\sum_{l=0}^{m_2-1}\frac{a_{h,m-l}\binom{l-k}{l}}{2^l(\alpha-l)\binom{\alpha+\beta-k-l}{\beta-k}}=0,
\end{equation}
for $k=1,\ldots,m-h-1$, $h=0,\ldots, m-2$. This identity is then equivalent to (\ref{rey}). Then we finish the proof by proving (\ref{sfo}). For $h=0$, it is easy to check that
$$
a_{0,l}=\begin{cases}\displaystyle\frac{\displaystyle\binom{l+m_2-2}{m_2-1}}{2^{l+m_2-1}},&\quad l=1,\ldots, m_1,\\
\displaystyle\frac{\displaystyle\binom{l-2}{l-m_1-1}}{2^{l-1}},&\quad l=m_1+1,\ldots, m.
\end{cases}
$$
Inserting them into (\ref{sfo}), we get the identity (\ref{comb2}) in Lemma \ref{comb}.

The rest of the proof proceeds by induction on $m_2$. We first consider $m_2=0$, for which $b_l(x)=(1+x)^{l-1}$, $l=0,\ldots , m_1$. A simple computation gives
$$
a_{h,l}=(-1)^l2^{h-l}\binom{h}{l}.
$$
Inserting this identity in (\ref{sfo}), we get
$$
\sum_{l=0}^{m_1-1}\frac{(-1)^l\binom{h}{m_1-l}\binom{l-k}{l}}{2^l(\beta-l)\binom{\alpha+\beta-k-l}{\alpha-k}}=0.
$$
But this is the identity (\ref{comb1}) in Lemma \ref{comb}.

From now on, we write $b_l^{m_2}$, $l=1,\ldots, m$, for the basis (\ref{base}) corresponding to the nonnegative integers $m_1$ and $m_2$, and $b_l^{m_2+1}$, $l=1,\ldots, m+1$, for the basis (\ref{base}) corresponding to $m_1$ and $m_2+1$. We also write
$a_{h,l}^{m_2}$, $a_{h,l}^{m_2+1}$ for the corresponding coefficients of $v_h$ with respect to $b_l^{m_2}$, $b_l^{m_2+1}$, respectively.
For $h=1,\ldots , m+1$, we straightforwardly have the following relationship between $a_{h,l}^{m_2}$ and $a_{h,l}^{m_2+1}$
$$
a_{h,l}^{m_2+1}=\begin{cases} a_{h-1,l}^{m_2},& l=1,\ldots, m_1,\\
0,& l=m_1+1,\\
a_{h-1,l-1}^{m_2}, &l=m_1+2,\ldots, m+1.
\end{cases}
$$
This shows that the identity (\ref{sfo}) for $m_2+1$ and $h$ reduces to (\ref{sfo}) for $m_2$ and $h-1$, and hence the induction hypothesis says that (\ref{sfo}) holds for $m_2+1$ and $h=1,\ldots , m+1$.
\end{proof}

\section{$\D$-operators}\label{sec3}
The $\D$-operator concept was introduced by one of us in \cite{du1}. In  \cite{AD}, \cite{du1}--\cite{ddI2}, it was shown that $\D$-operators are an extremely useful tool of an unified method for generating families of polynomials that are eigenfunctions of higher-order differential, difference or $q$-difference operators. Hence, we start by recalling the concept of $\D$-operator.

The starting point is a sequence of polynomials $(p_n)_n$, $\deg(p_n)=n$, and an algebra of operators $\A $ that act in the linear space of
polynomials $\mathbb{P}$. For the Jacobi polynomials we consider the algebra $\A $ formed by all differential operators of finite order which do not increase the degree of polynomials, i.e.
\begin{equation}\label{algdiffc}
\A =\left\{ \sum_{j=0}^sf_j\left(\frac{d}{dx}\right)^j : f_j\in \PP, \deg(f_j)\le j, j=0,\ldots,s, s\in \NN \right\}.
\end{equation}
If $f_s\neq0$ then the \emph{order} of such differential operator is $s$. In addition, we assume that the polynomials $p_n$, $n\ge 0$, are eigenfunctions of a certain operator $D_p\in \A$. We write $(\theta_n)_n$ for the corresponding eigenvalues such that $D_p(p_n)=\theta_np_n$, $n\ge 0$. For the Jacobi polynomials, $\theta_n$ is a polynomial in $n$ of degree 2 (see (\ref{autov})), but we do not assume any constraint on the sequence $(\theta_n)_n$ in this section.

Given two sequences of numbers, $(\varepsilon_n)_n$ and $(\sigma_n)_n$, a $\D$-operator associated with the algebra $\A$ and the sequence of polynomials $(p_n)_n$ is defined as follows. First, we consider  the operator $\D :\PP \to \PP $ defined by linearity from
\begin{equation*}\label{defTo}
\D (p_n)=-\frac{1}{2}\sigma_{n+1}p_n+\sum _{j=1}^n (-1)^{j+1}\sigma_{n-j+1}\varepsilon _n\cdots \varepsilon _{n-j+1}p_{n-j},\quad n\ge 0.
\end{equation*}
Then, we say that $\D$ is a $\D$-operator if $\D\in \A$. In \cite{du1} this type of $\D$-operator was designated as type 2, whereas $\D$-operators of type 1 appear when the sequence $(\sigma_n)_n$ is constant. $\D$-operators of type 1 are simpler but they are only useful when the sequence of eigenvalues $(\theta_n)_n$ is linear in $n$; this is the reason why we used $\D$-operators of type 1 in \cite{ddI1} for discrete Laguerre-Sobolev
polynomials, but we have to use $\D$-operators of type 2 in this paper for discrete Jacobi-Sobolev polynomials.

Let us now provide a couple of examples of $\D$-operators for the Jacobi polynomials. We now consider the algebra $\A$ of differential operators defined by \eqref{algdiffc}. The two $\D$-operators for the Jacobi polynomials are defined by the sequences $(\varepsilon_{n,h})_n$ and $(\sigma_{n,h})_n, h=1,2,$ given by
\begin{align}
\label{eps1}&\varepsilon_{n,1}=-\frac{n+\alpha}{n+\beta},\quad&\sigma_{n,1}&=\sigma_n=2n+\alpha+\beta-1,\\
\label{eps2}&\varepsilon_{n,2}=1,\quad&\sigma_{n,2}&=-\sigma_n=-(2n+\alpha+\beta-1).
\end{align}
As proved in Lemma A.7 of \cite{du1}, these sequences define two  $\mathcal{D}$-operators $\mathcal{D}_1$
and $\mathcal{D}_2$ for the Jacobi polynomials. Moreover
\begin{equation}\label{dopjac}
\mathcal{D}_1=-\frac{\alpha+\beta+1}{2}I+(1-x)\frac{d}{dx},\quad\mbox{and}\quad\mathcal{D}_2=\frac{\alpha+\beta+1}{2}I+(1+x)\frac{d}{dx}.
\end{equation}

We next show how to use $\D$-operators to construct new sequences of polynomials $(q_n)_n$ such that there exists an operator $D_q\in \A$ for which they are eigenfunctions (we follow the same lines as Section 3 in \cite{ddI2}).

Consider a combination of $m+1$, $m\ge 1$, consecutive $p_n$'s.
We also use $m$ arbitrary polynomials $Y_1, Y_2, \ldots, Y_m,$ and $m$ $\D$-operators, $\D_1, \D_2, \ldots, \D_m,$ (which are not necessarily different) defined by the pairs of sequences $(\varepsilon_n^h)_n$, $(\sigma_n^h)_n$, $h=1,\ldots , m$:
\begin{equation}\label{Dh}
\mathcal{D}_h(p_n)=-\frac{1}{2}\sigma_{n+1}^hp_n+\sum _{j=1}^n (-1)^{j+1}\sigma_{n-j+1}^h\varepsilon_{n}^{h}\cdots\varepsilon_{n-j+1}^{h}p_{n-j}.
\end{equation}

For $h=1,2,\ldots,m$, we assume that the sequences $(\varepsilon_{n}^{h})_n$ and $(\sigma_{n}^{h})_n$ are rational functions in $n$. We write $\xi_{x,i}^h$, $i\in\ZZ$ and $h=1,2,\ldots,m$, for the auxiliary functions defined by
\begin{equation}\label{defxi}
\xi_{x,i}^h=\prod_{j=0}^{i-1}\varepsilon_{x-j}^{h}, \quad i\ge 1,\quad \quad \xi_{x,0}^h=1,\quad\quad \xi_{x,i}^h=\frac{1}{\xi_{x-i,-i}^h},\quad i\leq-1.
\end{equation}
We consider the $m\times m$ (quasi) Casorati determinant defined by
\begin{equation}\label{casd1}
\Omega (x)=\det \left(\xi_{x-j,m-j}^lY_l(\theta_{x-j})\right)_{l,j=1}^m.
\end{equation}
Then, we have the following

\begin{theorem}[Theorem 3.1 of \cite{ddI2}]\label{Teor1} Let $\A$ and $(p_n)_n$ be an algebra of operators that act in the linear space of polynomials and a sequence of polynomials $(p_n)_n$, $\deg(p_n)=n$, respectively. We assume that $(p_n)_n$ are eigenfunctions of an operator $D_p\in \A$, i.e., the numbers $\theta_n, n\geq0$, exist such that $D_p(p_n)=\theta_np_n$, $n\ge 0$. We also have $m$ pairs of sequences of numbers $(\varepsilon_n^h)_n$, $(\sigma_n^h)_n$, $h=1,\ldots , m$, which define $m$ $\D$-operators $\D_1,\ldots,\D_m$ (not necessarily different) for $(p_n)_n$ and $\A$ (see (\ref{Dh})) and for $h=1,2,\ldots,m$,  we assume that each one of the sequences $(\varepsilon_{n}^{h})_n$, $(\sigma_n^h)_n$ is a rational function in $n$.

Let $Y_1, Y_2, \ldots, Y_m,$ be $m$ arbitrary polynomials that satisfy $\Omega (n)\not =0$, $n\ge 0$, where $\Omega $ is the Casorati determinant defined by (\ref{casd1}).

Consider the sequence of polynomials $(q_n)_n$ defined by
\begin{equation}\label{qus}
q_n(x)=\begin{vmatrix}
               p_n(x) & -p_{n-1}(x) & \cdots & (-1)^mp_{n-m}(x) \\
               \xi_{n,m}^1Y_1(\theta_{n}) &  \xi_{n-1,m-1}^1Y_1(\theta_{n-1}) & \cdots & Y_1(\theta_{n-m}) \\
               \vdots & \vdots & \ddots & \vdots \\
                \xi_{n,m}^mY_m(\theta_{n}) &  \xi_{n-1,m-1}^mY_m(\theta_{n-1}) & \cdots & Y_m(\theta_{n-m})
             \end{vmatrix}.
\end{equation}
For a rational function $S$, we define the function $\lambda_x$ by
\begin{equation*}\label{deflx}
\lambda_x-\lambda_{x-1}=S(x)\Omega(x),
\end{equation*}
and for $h=1,\ldots,m$, we define the function $M_h(x)$ by
\begin{equation}\label{emeiexp}
M_h(x)=\sum_{j=1}^m(-1)^{h+j}\xi_{x,m-j}^hS(x+j)\det\left(\xi_{x+j-r,m-r}^{l}Y_l(\theta_{x+j-r})\right)_{l\in \II_h;r\in\II_j},
\end{equation}
where $\II_h=\{1,2,\ldots , m\}\setminus \{ h\}$. We assume the following:
\begin{align}\label{ass0}
&\mbox{$S(x)\Omega (x)$ is a polynomial in $x$.}\\\label{ass1}
&\mbox{Polynomials  $\tilde{M}_1,\ldots,\tilde{M}_m,$  exist such that}\\\nonumber
&\hspace{1cm}\mbox{$M_h(x)=\sigma_{x+1}^h\tilde{M}_h(\theta_x)$, $h=1,\ldots,m.$}\\\label{ass2}
&\mbox{A polynomial $P_S$ exists such that $\displaystyle P_S(\theta_x)=2\lambda_x+\sum_{h=1}^mY_h(\theta_x)M_h(x)$.}
\end{align}
Then, an operator $D_{q,S}\in \A$ exists such that
$$
D_{q,S}(q_n)=\lambda_nq_n,\quad n\ge 0.
$$
Moreover, the operator $D_{q,S}$ is defined by
\begin{equation}\label{Dq}
D_{q,S}=\frac{1}{2}P_S(D_p)+\sum_{h=1}^m\tilde M_h(D_p)\D_hY_h(D_p),
\end{equation}
where $D_p\in \A$ is the operator for which the polynomials $(p_n)_n$ are eigenfunctions.
\end{theorem}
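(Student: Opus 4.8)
The plan is to verify directly that the explicitly given operator $D_{q,S}$ of \eqref{Dq} satisfies $D_{q,S}(q_n)=\lambda_nq_n$. I would proceed in two stages: first compute the action of $D_{q,S}$ on the base polynomials $p_n$, and then substitute this into the determinantal definition \eqref{qus} of $q_n$, exploiting that a differential operator in $x$ acts only on the single polynomial row of that determinant (rows $2,\dots,m+1$ are scalars).

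For the first stage I would use that $(p_n)_n$ are eigenfunctions of $D_p$, so that $P_S(D_p)$, $Y_h(D_p)$ and $\tilde M_h(D_p)$ all act diagonally, as multiplication by $P_S(\theta_k)$, $Y_h(\theta_k)$ and $\tilde M_h(\theta_k)$ (here \eqref{ass1} and \eqref{ass2} guarantee that $\tilde M_h$ and $P_S$ are genuine polynomials). Combining this with the defining expansion \eqref{Dh} of each $\D_h$ and the product notation \eqref{defxi} for the $\xi^h$, a direct computation gives
\begin{equation*}
D_{q,S}(p_n)=\beta_{n,0}p_n+\sum_{j=1}^n(-1)^{j+1}\Big(\sum_{h=1}^mY_h(\theta_n)\xi_{n,j}^hM_h(n-j)\Big)p_{n-j},
\end{equation*}
with diagonal coefficient $\beta_{n,0}=\tfrac12P_S(\theta_n)-\tfrac12\sum_hY_h(\theta_n)\sigma_{n+1}^h\tilde M_h(\theta_n)$. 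The key point is that \eqref{ass1} turns every $\sigma_{n-j+1}^h\tilde M_h(\theta_{n-j})$ into $M_h(n-j)$, and then \eqref{ass2} forces $\beta_{n,0}=\lambda_n$ exactly, since the $M_h$-terms inside $\tfrac12P_S(\theta_n)$ cancel the subtracted ones. Thus $D_{q,S}(p_n)=\lambda_np_n+\sum_{j\ge1}\beta_{n,j}p_{n-j}$ with $\beta_{n,j}=(-1)^{j+1}\sum_hY_h(\theta_n)\xi_{n,j}^hM_h(n-j)$.

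For the second stage I would apply $D_{q,S}$ to \eqref{qus}; only the top row is affected, its $j$-th entry becoming $(-1)^jD_{q,S}(p_{n-j})$. By multilinearity I split this row as $\lambda_n$ times the original top row plus an error row with $j$-th entry $(-1)^j\big(D_{q,S}(p_{n-j})-\lambda_np_{n-j}\big)$. The first piece reproduces $\lambda_nq_n$, so it remains to show the determinant built from the error row and the unchanged $\D$-operator rows vanishes. Expanding that determinant along its first row and collecting the coefficient of each polynomial $p_{n-k}$, the vanishing reduces to a family of scalar identities relating the cofactors of \eqref{qus}, the increments $\lambda_{n-k}-\lambda_n$, the $M_h$, and the $\xi^h$. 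These are exactly what the definitions are engineered to satisfy: $M_h$ is the alternating sum of the relevant $m\times m$ minors of the Casorati matrix \eqref{casd1} in \eqref{emeiexp}, $\lambda_x$ solves $\lambda_x-\lambda_{x-1}=S(x)\Omega(x)$, and the $\xi^h$ obey the chain relation $\xi_{x,i}^h\xi_{x-i,i'}^h=\xi_{x,i+i'}^h$ from \eqref{defxi}; together with a Laplace/Jacobi-type expansion of $\Omega$ these make each collected coefficient vanish. Finally, \eqref{ass0} ensures $S\Omega$, hence $\lambda_x$ and all $\beta_{n,j}$, are polynomial in $x$, so $D_{q,S}\in\A$ and has finite order.

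The main obstacle is this last determinantal cancellation. Tracking how the off-diagonal corrections $\sum_{j\ge1}\beta_{n,j}p_{n-j}$, once placed in the first row, align (after grouping equal polynomials $p_{n-k}$) with combinations of the $\D$-operator rows is precisely what dictates the subdeterminant shape \eqref{emeiexp} of $M_h$ and the difference equation for $\lambda_x$. This is a Sylvester/Jacobi-type determinant manipulation and carries essentially all the content; by contrast the first stage and the membership $D_{q,S}\in\A$ are routine once the eigenvalue bookkeeping is in place.
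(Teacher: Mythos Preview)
The paper does not prove this theorem; it is quoted verbatim as Theorem~3.1 of \cite{ddI2} and used as a black box, so there is no in-paper proof to compare against.

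Your outline is the natural approach and matches in spirit what is done in \cite{ddI2}: compute $D_{q,S}(p_n)$ from \eqref{Dq} using that $D_p$ acts diagonally and \eqref{Dh} governs each $\D_h$, then let $D_{q,S}$ hit the top row of \eqref{qus} and reduce the vanishing of the error determinant to scalar identities for the coefficient of each $p_{n-k}$. Your Stage~1 computation is correct, and you correctly isolate the real content: the Stage~2 cancellation is precisely what the subdeterminant shape \eqref{emeiexp} of $M_h$ and the difference equation $\lambda_x-\lambda_{x-1}=S(x)\Omega(x)$ are tailored for. You have only asserted this cancellation, not verified it; working it out requires separating the ranges $1\le k\le m$ and $k>m$, using the chain rule $\xi_{x,i}^h\xi_{x-i,i'}^h=\xi_{x,i+i'}^h$, and recognizing the resulting alternating sums as Laplace expansions of $\Omega$ (or of the shifted minors appearing in \eqref{emeiexp}). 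This is bookkeeping rather than a new idea, but it is the substance of the proof.

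One small correction: your last sentence misattributes why $D_{q,S}\in\A$. This follows directly from \eqref{Dq}: $D_p\in\A$ and each $\D_h\in\A$ by hypothesis, $\A$ is an algebra, and $P_S,\tilde M_h,Y_h$ are genuine polynomials by \eqref{ass1}--\eqref{ass2}; hence \eqref{Dq} is a finite algebraic combination of elements of $\A$. Assumption \eqref{ass0} is not what puts $D_{q,S}$ in $\A$; its role is to make $\lambda_x$ well defined (pole-free at integers) via the difference equation.
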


\begin{remark}
For the particular cases of Laguerre, Jacobi, or Askey-Wilson polynomials, we can find Casorati determinants similar to (\ref{qus}) in \cite{GrHH}--\cite{Plamen3}.
\end{remark}

\begin{remark}
According to Remark 3.2 in \cite{ddI2}, the polynomial $P_S$ (\ref{ass2}) also satisfies
\begin{equation*}\label{Pdiff}
P_S(\theta_x)-P_S(\theta_{x-1})=S(x)\Omega(x)+S(x+m)\Omega(x+m).
\end{equation*}
\end{remark}

\bigskip
The assumptions (\ref{ass0}), (\ref{ass1}) and (\ref{ass2}) turn out to be straightforward for $\D$-operators of type 1 (we can then take $\theta _x=x$) but they need to be checked when we use $\D$-operators of type 2. The rest of this section will be devoted to check these three assumptions for the $\D$-operators (\ref{dopjac}) associated to the Jacobi polynomials.

We need to introduce some notation. We write $N_{1;x}^{\alpha;j}$ and $N_{2;x}^{\beta;j}$, $j\in \NN$ and $x\in \RR$, for the following functions:
\begin{equation*}\label{ABDijH}
N_{1;x}^{\alpha;j}=(-1)^j(x-j+\alpha+1)_j,\quad N_{2;x}^{\beta;j}=(x-j+\beta+1)_j.
\end{equation*}
The following properties hold easily by definition
\begin{equation*}\label{ABprop2}
N_{1;x-i}^{\alpha;m-i}=N_{1;x-i}^{\alpha;j}N_{1;x-i-j}^{\alpha;m-i-j},\quad N_{2;x-i}^{\beta;m-i}=N_{2;x-i}^{\beta;j}N_{2;x-i-j}^{\beta;m-i-j}.
\end{equation*}
Assume now that, as in Theorem \ref{Teor1}, we have $m$ $\D$-operators $\D_i$, $i=1,\ldots, m$, associated to the Jacobi polynomials
and defined by the sequences $(\varepsilon_n^h)_n$ and $(\sigma_n^h)_n$, $h=1,\ldots, m$.
Assume also that they correspond with the two $\D$-operators defined by the sequences (\ref{eps1}) and (\ref{eps2}). More precisely,
\begin{equation}\label{cons}
\varepsilon_n^h=\begin{cases} \varepsilon_{n,1}, & \mbox{for $h=1,\ldots, m_1$},\\
\varepsilon_{n,2}, & \mbox{for $h=m_1+1,\ldots, m$},
\end{cases}
\quad
\sigma_n^h=\begin{cases} \sigma_{n,1}, & \mbox{for $h=1,\ldots, m_1$},\\
\sigma_{n,2}, & \mbox{for $h=m_1+1,\ldots, m$}.
\end{cases}
\end{equation}
where the sequences $(\varepsilon_{n,1})_n$, $(\sigma_{n,1})_n$, and $(\varepsilon_{n,2})_n$, $(\sigma_{n,2})_n$, are defined by (\ref{eps1}) and (\ref{eps2}), respectively. The functions $\xi_{x,j}^h$ defined in \eqref{defxi} can then be written as
\begin{equation}\label{xit}
\xi_{x,j}^h=\begin{cases} \displaystyle\frac{N_{1;x}^{\alpha;j}}{N_{2;x}^{\beta;j}}=\frac{(-1)^j(x-j+\alpha+1)_j}{(x-j+\beta+1)_j},& \mbox{for $h=1,\ldots, m_1$,}\\
1,& \mbox{for $h=m_1+1,\ldots, m$.}
 \end{cases}
\end{equation}
We also need to introduce the polynomials $p$ and $q$ defined by
\begin{align}\label{def1p}
p(x)&=\prod_{i=1}^{m_1-1}N^{\alpha;m_1-i}_{1;x-m_2-i}N^{\beta;m_1-i}_{2;x-1},\\
\label{def1q}q(x)&=(-1)^{\binom{m}{2}}\prod_{h=1}^{m-1}\left(\prod_{i=1}^{h}\sigma_{x-m+\frac{i+h+1}{2}}\right),
\end{align}
where as in (\ref{eps1}) $\sigma_x=2x+\alpha+\beta-1$. It is easy to check that the polynomial $p$  in (\ref{def1p}) is the same as the polynomial $p$ defined in (\ref{def1pi}), as well as the polynomial $q$ in (\ref{def1q}) is the same as the polynomial $q$ defined in (\ref{def1qi}).

The key concept in order to check the assumptions (\ref{ass0}), (\ref{ass1}) and (\ref{ass2}) in Theorem \ref{Teor1} for the Jacobi polynomials is an \emph{involution} that characterizes the subring $\mathbb{R}[\theta_x]$ in $\mathbb{R}[x]$, where $\theta _x=x(x+\alpha+\beta+1)$ are the eigenvalues for the Jacobi polynomials. This involution is given by
\begin{equation}\label{inv}
\I^{\alpha+\beta}\big(f(x)\big)=f\big(-(x+\alpha+\beta+1)\big),\quad f\in\mathbb{R}[x].
\end{equation}
Clearly, we have $\I^{\alpha+\beta}(\theta_x)=\theta_x$. Hence every polynomial in $\theta_x$ is invariant under the action of $\I^{\alpha+\beta}$. Conversely, if $f\in\mathbb{R}[x]$ is invariant under $\I^{\alpha+\beta}$, then $f\in\mathbb{R}[\theta_x]$.

We also have that if $f\in\mathbb{R}[x]$ is skew invariant, i.e., $\I^{\alpha+\beta}(f)=-f,$ then $f$ is divisible by $\theta_{x-1/2}-\theta_{x+1/2}$ and the quotient belongs to $\mathbb{R}[\theta_x]$. We remark that in the case of Jacobi polynomials and from the definition of $\theta_x$ and $\sigma_x$ we have that $\sigma_{x+1}=\theta_{x-1/2}-\theta_{x+1/2}$. We observe that $\sigma_{x+1}$ is itself skew invariant.

According to the definition \eqref{inv} we have the following properties:
\begin{align}
\label{Iprop1}\I^{\alpha+\beta+i}(\theta_{x-j})&=\theta_{x+i+j},\quad &\I^{\alpha+\beta+i}(\sigma_{x-j})&=-\sigma_{x+i+j+2},\\
\label{ABpropsH}
\I^{\alpha+\beta+i}(N_{1;x-j-h}^{\alpha;m-h})&=N_{2;x+m+i+j}^{\beta;m-h},&\quad \I^{\alpha+\beta+i}(N_{2;x-j-h}^{\beta;m-h})&=N_{1;x+m+i+j}^{\alpha;m-h}.
\end{align}

\medskip

We are now ready to establish that the three assumptions (\ref{ass0}), (\ref{ass1}) and (\ref{ass2}) in Theorem \ref{Teor1}
hold for the two $\D$-operators associated with the Jacobi polynomials.

\begin{lemma}\label{l5.2} Let $\A$ and $(p_n)_n$ be the algebra of differential operators (\ref{algdiffc}) and the sequence of Jacobi polynomials $p_n=J_n^{\alpha,\beta}$, respectively. Let $D_{\alpha,\beta}$ be the second-order differential operator (\ref{dopJac}) such that $\theta_n=n(n+\alpha+\beta+1)$ and
$D_{\alpha,\beta}(J_n^{\alpha,\beta})=\theta_nJ_n^{\alpha,\beta}$. For $j=1,2$, we also have $m_j$ $\D$-operators defined by the sequences $(\varepsilon_{n,j})_n$, $(\sigma_{n,j})_n$ (see
(\ref{eps1}) and (\ref{eps2})). Then, we write $m=m_1+m_2$ and we let $\Xi$ be a  polynomial in $x$, which is invariant under the action of $\I^{\alpha+\beta-m-1}$. We define  the rational function $S$  by
\begin{equation}\label{GGH}
S(x)=\frac{\sigma_{x-\frac{m-1}{2}}\Xi(x)\left(N_{2;x-1}^{\beta;m-1}\right)^{m_1}}{p(x)q(x)},
\end{equation}
where $p$ and $q$ are the polynomials defined by (\ref{def1p}) and (\ref{def1q}), respectively. Then, the three assumptions (\ref{ass0}), (\ref{ass1}), and (\ref{ass2}) in Theorem \ref{Teor1} hold for any polynomials $Y_l$, $l=1,\ldots, m$.
\end{lemma}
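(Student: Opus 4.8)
The plan is to verify the three assumptions one at a time, reducing each to one of two kinds of statement: a \emph{polynomiality} statement (every apparent pole cancels) and an \emph{invariance} statement (a given rational function lies in $\RR[\theta_x]$, equivalently is fixed by the involution $\I^{\alpha+\beta}$, or is skew-invariant and hence divisible by $\sigma_{x+1}$ with quotient in $\RR[\theta_x]$). The characterization of $\RR[\theta_x]$ by $\I^{\alpha+\beta}$ recalled before the statement, together with the transformation rules (\ref{Iprop1}) and (\ref{ABpropsH}) for $\theta_{x-j}$, $\sigma_{x-j}$, $N_{1;x}^{\alpha;j}$ and $N_{2;x}^{\beta;j}$, will be the engine of the whole argument; the factors $\sigma_{x-(m-1)/2}$, $(N_{2;x-1}^{\beta;m-1})^{m_1}$ and $1/(p(x)q(x))$ in the definition (\ref{GGH}) of $S$ are precisely what make these rules close up.

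For (\ref{ass0}) I would first clear denominators in $\Omega(x)=\det(\xi_{x-j,m-j}^lY_l(\theta_{x-j}))_{l,j=1}^m$. By (\ref{xit}) the only denominators occur in the first $m_1$ rows, where $\xi_{x-j,m-j}^l=N_{1;x-j}^{\alpha;m-j}/N_{2;x-j}^{\beta;m-j}$; multiplying each column $j$ by $N_{2;x-j}^{\beta;m-j}$ turns all entries into polynomials, so the poles of $\Omega$ are controlled by these $N_2$-factors. The numerator $(N_{2;x-1}^{\beta;m-1})^{m_1}$ of $S$ together with the $N_1$- and $N_2$-factors assembled into $p(x)$ in (\ref{def1p}) are designed to cancel exactly these poles. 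The remaining denominator is the product of $\sigma$-factors forming $q(x)$ in (\ref{def1q}); here I would show that $\Omega$ vanishes at each zero of these $\sigma$'s. Indeed, a zero of $\sigma_{x-m+(i+h+1)/2}$ forces $x-j$ and $x-j'$ to be exchanged by an involution for suitable columns $j\ne j'$, whence $\theta_{x-j}=\theta_{x-j'}$ and, after matching the $\xi$-prefactors through (\ref{ABpropsH}), two columns of $\Omega$ become proportional, so $\det\Omega=0$. Counting these vanishings against the multiplicities in $q$ yields that $S\Omega$ is a polynomial, and hence that $\lambda_x$ is a well-defined polynomial solving $\lambda_x-\lambda_{x-1}=S(x)\Omega(x)$.

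For (\ref{ass1}) the same denominator bookkeeping, applied to the subdeterminants in (\ref{emeiexp}), shows first that each $M_h(x)$ is a polynomial. The substantive point is then that $M_h(x)/\sigma_{x+1}^h\in\RR[\theta_x]$. I would prove this by applying the involution to the defining sum (\ref{emeiexp}): using $\I^{\alpha+\beta+i}(\theta_{x-j})=\theta_{x+i+j}$ the factors $Y_l(\theta_{x+j-r})$ are permuted among themselves, using (\ref{ABpropsH}) the $\xi$-prefactors are carried to their mirror images, and using $\I^{\alpha+\beta+i}(\sigma_{x-j})=-\sigma_{x+i+j+2}$ together with the odd factor $\sigma_{x-(m-1)/2}$ hidden in $S$ one checks that $M_h$ is skew-invariant under the appropriately shifted involution. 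Hence $M_h/\sigma_{x+1}^h$ is invariant and, by the characterization of $\RR[\theta_x]$, equals a polynomial $\tilde M_h$ in $\theta_x$. Assumption (\ref{ass2}) then follows from the Remark giving $P_S(\theta_x)-P_S(\theta_{x-1})=S(x)\Omega(x)+S(x+m)\Omega(x+m)$: it suffices to show that $2\lambda_x+\sum_{h=1}^mY_h(\theta_x)M_h(x)$ is both a polynomial and invariant under $\I^{\alpha+\beta}$. Polynomiality comes from (\ref{ass0}) and (\ref{ass1}); invariance comes from pairing the skew-invariance of each $\sigma_{x+1}^h\tilde M_h(\theta_x)$ against the defining relation $\lambda_x-\lambda_{x-1}=S(x)\Omega(x)$, so that the combination is fixed by $\I^{\alpha+\beta}$ and therefore equals $P_S(\theta_x)$ for some polynomial $P_S$.

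The main obstacle is the invariance verification in (\ref{ass1}) and (\ref{ass2}): one must track, term by term in the determinantal sums, how every index shift produced by (\ref{Iprop1}) and (\ref{ABpropsH}) recombines, and confirm that the half-integer shift in $\sigma_{x-(m-1)/2}$ and the exponent $m_1$ on $N_{2;x-1}^{\beta;m-1}$ in $S$ are exactly the ones that turn the transformed expression back into itself, up to the sign absorbed by $\sigma_{x+1}$. Since the statement must hold for \emph{arbitrary} polynomials $Y_l$, this invariance cannot use any special feature of the $Y_l$ beyond the fact that $Y_l(\theta_{x-j})$ is a polynomial in $\theta_{x-j}$; the whole burden therefore falls on the $S$-prefactor and the involution identities, which is where the computation is most delicate.
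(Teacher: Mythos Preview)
Your overall architecture matches the paper's: reduce (\ref{ass0}) to polynomiality of $S\Omega$, reduce (\ref{ass1}) to skew-invariance of $M_h$ under $\I^{\alpha+\beta}$, and reduce (\ref{ass2}) to an invariance statement. For (\ref{ass0}) the paper does not argue directly as you propose; it packages your column-cancellation/proportionality reasoning into Lemma~\ref{lgp1} (whose proof is deferred to \cite{dudh}), writes $S(x)\Omega(x)=\sigma_{x-(m-1)/2}\Xi(x)P(x)$ with $P$ as in (\ref{def2p}), and is done. For (\ref{ass1}) your plan is exactly the paper's: rewrite $M_h$ with denominators cleared, then show $\I^{\alpha+\beta}(M_h)=-M_h$ by applying (\ref{Iprop1}), (\ref{ABpropsH}), (\ref{Qprop}), (\ref{qprop}), (\ref{pprop}) term by term and reindexing $j\mapsto m{-}j{+}1$, $r\mapsto m{-}r{+}1$.

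The place where your sketch diverges, and where it is incomplete as written, is (\ref{ass2}). You propose to show directly that $2\lambda_x+\sum_h Y_h(\theta_x)M_h(x)$ is $\I^{\alpha+\beta}$-invariant, and you justify this by ``pairing the skew-invariance of each $\sigma_{x+1}^h\tilde M_h(\theta_x)$ against the defining relation $\lambda_x-\lambda_{x-1}=S(x)\Omega(x)$.'' But skew-invariance of the summands $Y_h(\theta_x)M_h(x)$ only tells you the antisymmetric part of the sum; to conclude invariance of the whole expression you still need to know the antisymmetric part of $2\lambda_x$, and nothing you have said pins that down. The paper avoids this altogether: citing (5.8) of \cite{ddI2}, it observes that (\ref{ass2}) follows once one checks the single identity
\[
\I^{\alpha+\beta-1}\big(S(x)\Omega(x)\big)=-S(x{+}m)\Omega(x{+}m),
\]
and then verifies this identity directly from (\ref{SOm}) using the same involution rules and the reindexing $j\mapsto m{-}j{+}1$. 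This is both cleaner and what actually closes the argument; your route would ultimately need this same identity underneath, so it is worth adopting it explicitly.
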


The proof is quite technical at certain points and is given separately in the Appendix.

\section{Differential properties for the discrete Jacobi-Sobolev polynomials}\label{sec4}
In this section we will study differential properties of orthogonal polynomials with respect to the discrete Jacobi-Sobolev bilineal form (\ref{idslip}). They are a consequence of the determinantal representation (\ref{iquss}) and Theorem \ref{Teor1}.

Comparing (\ref{defno})  with (\ref{xit}), we get straightforwardly
\begin{equation*}
\no^h _{n,j}=\begin{cases}
N^{\alpha,m-j}_{1;n-j}N^{\beta;j-1}_{2;n-1}=(\beta+n-m+1)_{m-1}\xi^h_{n-j,m-j},&\quad l=1,\ldots, m_1,\\
\xi^h_{n-j,m-j},&\quad l=m_1+1,\ldots, m.
\end{cases}
\end{equation*}
Theorem \ref{mainth1} gives then the following determinantal expression for the orthogonal polynomials with respect to the discrete Jacobi-Sobolev bilineal form (\ref{idslip})
\begin{equation}\label{iquss2}
q_n(x)=\frac{(\beta+n-m+1)_{m-1}^{m_1}}{p(n)q(n)}\begin{vmatrix}
               J_{n}^{\alpha,\beta}(x) & -J_{n-1}^{\alpha,\beta}(x) & \cdots & (-1)^mJ_{n-m}^{\alpha,\beta}(x) \\
\xi_{n,m}^1 z_1(n) &\xi_{n-1,m-1}^1 z_1(n-1) & \cdots &  z_1(n-m) \\
               \vdots & \vdots & \ddots & \vdots \\
\xi_{n,m}^m z_m(n) &  \xi_{n-1,m-1}^m z_m(n-1) & \cdots & z_m(n-m)
             \end{vmatrix}.
\end{equation}

Observe that the functions $z_l, l=1,\ldots,m,$ (see (\ref{ri1}) and (\ref{ri2})) are not polynomials in $\theta _x$ (not even polynomials in $x$) as they should be if we want to apply Theorem \ref{Teor1}. But it turns out that if $\alpha$ and $\beta$ are nonnegative integers satisfying $\alpha\geq m_2$ and $\beta\geq m_1$, then the functions $z_l, l=1,\ldots,m,$ are polynomials in $\theta_x$. Using that we prove the following

\begin{theorem}\label{mainth3}
Assume that any of the two equivalent properties (1) and (2) in
Theorem \ref{mainth1} hold. If $\bm M ,\bm N \not =0$, we assume, in addition, that $\alpha$ and $\beta$ are nonnegative integers with $\alpha \ge m_2$ and $\beta \ge m_1$. If, instead, $\bm M=0$,  we assume that only $\alpha$ is a positive integer with $\alpha \ge m_2$, and
if $\bm N=0$, we assume that only $\beta$ is a positive integer with $\beta \ge m_1$.
Consider a polynomial $\Xi$ invariant under the action of $\I^{\alpha+\beta-m-1}$ (see (\ref{inv})) and the associated rational function $S$ (see (\ref{GGH})). Then there exists a finite order differential operator $D_{S}$ (which can be constructed using (\ref{Dq}))
for which the orthogonal polynomials $(q_n)_n$
(\ref{iquss}) are eigenfunctions. Moreover, up to an additive constant, the corresponding eigenvalues
$(\lambda_n)_n$ of $D_S$ are $\lambda _n=P_S(\theta_n)$, where $P_S$ is
the polynomial defined by the difference equation
\begin{equation}\label{Pdiff2}
P_S(\theta_x)-P_S(\theta_{x-1})=S(x)\Omega (x)+S(x+m)\Omega (x+m),
\end{equation}
where $\Omega$ is the (quasi) Casorati determinant
\begin{equation}\label{defOmb}
\Omega (x)=\det \left(\xi_{x-j,m-j}^lz_l(x-j)\right)_{l,j=1}^m.
\end{equation}
Moreover, the order of the differential operator $D_S$ is $$\deg \Xi +2(\bwr (\bm M)+\awr (\bm N)+1),$$ where $\awr$ and $\bwr$ are
the $\alpha$ and $\beta$ weighted rank introduced in Definition \ref{wrM}.
\end{theorem}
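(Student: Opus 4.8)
The plan is to read off Theorem~\ref{mainth3} as a direct specialization of Theorem~\ref{Teor1}, after matching the determinant (\ref{iquss2}) for $q_n$ with the Casorati determinant (\ref{qus}), and then to extract the order of $D_S$ by a degree count governed by the difference equation (\ref{Pdiff2}).

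\emph{Matching and existence.} First I would invoke the observation recorded just before the statement: under the integrality hypotheses the functions $z_l$ of (\ref{ri1})--(\ref{ri2}) are polynomials in $\theta_x=x(x+\alpha+\beta+1)$, because the $\Gamma$-quotients $\Gamma^{a,b}_{c,d}$ collapse to polynomials invariant under $\I^{\alpha+\beta}$ once $\alpha\ge m_2$ and $\beta\ge m_1$ (when $\bm M=0$, resp. $\bm N=0$, only the surviving block of $z_l$ matters and a single integrality condition suffices). Setting $Y_l(\theta_x)=z_l(x)$ and using the identity $\no^h_{n,j}=(\beta+n-m+1)_{m-1}\xi^h_{n-j,m-j}$ for $h\le m_1$ and $\no^h_{n,j}=\xi^h_{n-j,m-j}$ for $h>m_1$ (with $\xi$ as in (\ref{xit})), formula (\ref{iquss2}) becomes exactly (\ref{qus}) for the $m$ $\D$-operators obtained by repeating $\D_1$ $m_1$ times and $\D_2$ $m_2$ times, as prescribed by (\ref{cons}) and (\ref{dopjac}). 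Lemma~\ref{l5.2} certifies that assumptions (\ref{ass0}), (\ref{ass1}), (\ref{ass2}) hold for the rational function $S$ of (\ref{GGH}) and for these (arbitrary) $Y_l$; hence Theorem~\ref{Teor1} yields the operator $D_S$ of (\ref{Dq}) with $D_S(q_n)=\lambda_nq_n$ and $\lambda_n=P_S(\theta_n)$ up to an additive constant, and the difference equation (\ref{Pdiff2}) is the Remark following Theorem~\ref{Teor1}. This disposes of everything except the order.

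\emph{Order count.} Since $D_p=D_{\alpha,\beta}$ has order $2$ and each $\D_h$ has order $1$, the operator $D_S=\tfrac12 P_S(D_p)+\sum_{h=1}^m \tilde M_h(D_p)\,\D_h\,Y_h(D_p)$ has order $\max\bigl(2\deg P_S,\ \max_h(2\deg\tilde M_h+2\deg Y_h+1)\bigr)$, and I would argue that the order is determined by the term $\tfrac12 P_S(D_p)$, so that it equals $2\deg P_S$ (which requires checking that its leading coefficient does not cancel against the remaining terms). The degree of $P_S$ follows from (\ref{Pdiff2}): as $\theta_x$ is quadratic and $S\Omega$ is a polynomial by (\ref{ass0}), comparing $x$-degrees gives $2\deg P_S-1=\deg_x\!\bigl(S(x)\Omega(x)\bigr)$, so the order is $\deg_x(S\Omega)+1$. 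The factor $\deg_x S$ is elementary: from (\ref{GGH}), (\ref{def1p}), (\ref{def1q}), and the telescoping $(m-1)m_1-m_1(m_1-1)=m_1m_2$ (from $(N_{2;x-1}^{\beta;m-1})^{m_1}$ against $p$), together with $\deg_x q=\binom m2$ and the degree-$1$ factor $\sigma_{x-(m-1)/2}$, one gets $\deg_x S=\deg\Xi+m_1m_2-\binom m2+1$.

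\emph{The crux: $\deg_x\Omega$.} Everything thus reduces to the degree of $\Omega(x)=\det(\xi^l_{x-j,m-j}z_l(x-j))_{l,j=1}^m$, and this is where the weighted ranks enter and where the real difficulty lies. The $\xi$-factors are rational of $x$-degree $0$, so each entry contributes through the $\theta$-degree of $z_l$, which is dictated by which rows of $\bm M$ (resp.\ $\bm N$) are nonzero. The essential subtlety is that $\deg_x\Omega$ is not the sum of the individual degrees of the $z_l$: linear dependencies among the columns, and then among the rows of the column-reduced matrix, force cancellations in the Casorati determinant that lower its degree, and Definition~\ref{wrM} is tailored precisely to record these drops via the column-gaps $\eta_j$ and the row-gaps $\tau_j$. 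Carrying out the same column/row reduction as in \cite{ddI1} --- now adapted to the quadratic $\theta_x$, which is the origin of the factor $2$ --- I expect to obtain $\deg_x\Omega=2\bigl(\bwr(\bm M)+\awr(\bm N)\bigr)+\binom m2-m_1m_2$. Combined with $\deg_x S$ above, this gives $\deg_x(S\Omega)+1=\deg\Xi+2\bigl(\bwr(\bm M)+\awr(\bm N)+1\bigr)$, the asserted order. The main obstacle is exactly this last step: isolating the leading $\theta$-coefficients of the $z_l$, arranging them into $\bm M$ and $\bm N$, and proving that the successive eliminations lower the determinant's degree by precisely the amount encoded in $\gwr$. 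As this argument runs almost parallel to the Laguerre--Sobolev case, I would transcribe its structure and flag only the modifications caused by $\theta_x$ being quadratic rather than linear.
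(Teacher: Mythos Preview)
Your existence argument coincides with the paper's: reduce to Theorem~\ref{Teor1} via Lemma~\ref{l5.2} after checking that the $z_l$ become polynomials in $\theta_x$ under the integrality hypotheses, and identify (\ref{iquss2}) with (\ref{qus}). No issues there.

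The order computation has a genuine gap. You correctly write the order as $\max\bigl(2\deg P_S,\ \max_h(2\deg\tilde M_h+2\deg Y_h+1)\bigr)$ and then assert that the first term dominates, but your parenthetical ``checking that its leading coefficient does not cancel'' misidentifies the issue and is not addressed. What must actually be shown is that each summand $\tilde M_h(D_p)\D_h Y_h(D_p)$ has order \emph{strictly less} than $2\deg P_S$; this is a degree bound on the polynomials $M_h$, not a leading-coefficient cancellation. The paper handles this with a separate mechanism that your sketch omits entirely: first it replaces the $Y_l$ by linear combinations $\hat Y_l$ with pairwise distinct degrees within each block, arranged so that the low-degree slots are filled by monomials (Lemma~\ref{l6.1} guarantees $D_{q,S}$ is invariant under such row operations); then it invokes Lemma~\ref{l6.2}, a structural bound on $\deg M_h$ in terms of the determinants $\Omega^h_g$ obtained by substituting $\hat Y_h\to x^g$, to get $\deg\hat M_h\le \deg(S\hat\Omega^h_{g_h})-2g_h$. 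Only after this reduction does Lemma~\ref{lgp1} yield $\deg\hat M_h+2\deg\hat Y_h\le \deg\Xi+d+1<\deg\Xi+d+2$. Without this step your argument establishes $2\deg P_S$ only as a \emph{candidate} for the order, not the order itself.

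Your treatment of $\deg_x(S\Omega)$ is the paper's, repackaged. Rather than splitting into $\deg_x S$ and $\deg_x\Omega$ (note $\Omega$ is only rational, since the $\xi^h_{x,j}$ carry denominators for $h\le m_1$), the paper uses the factorization $S(x)\Omega(x)=\sigma_{x-(m-1)/2}\Xi(x)P(x)$ from (\ref{psw}) with $P$ the polynomial of (\ref{def2p}), and Lemma~\ref{l6.3} (the Jacobi analogue of \cite[Lemma~4.1]{ddI1}) gives $\deg P=2(\bwr(\bm M)+\awr(\bm N))$ directly. Your expected formula for $\deg_x\Omega$ is equivalent to this, so the ``crux'' you flag is exactly Lemma~\ref{l6.3}, and transcribing the Laguerre argument is indeed the correct move there.
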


Theorem \ref{mainth2} and Corollary \ref{cori1} in the Introduction are then easy consequences of Theorem \ref{mainth3} (for the particular case of $\Xi=1$).

\begin{proof}
For the two $\D$-operators associated to the Jacobi polynomials,
Lemma \ref{l5.2} guarantees that the assumptions in Theorem \ref{Teor1} hold for each rational function $S$ defined by (\ref{GGH})
and any polynomials $Y_l$, $l=1,\ldots, m$. If we prove that there exist polynomials $Y_l$, $l=1,\ldots, m$, such that $z_l(x)=Y_l(\theta_x)$, where $z_l$ are the functions defined by (\ref{ri1}) and (\ref{ri2}),
the first part of Theorem \ref{mainth3} will follow as a straightforward consequence of the determinantal representation (\ref{iquss2}) and Theorem \ref{Teor1} (possibly after a renormalization constant).

If $\beta$ is a nonnegative integer with $\beta \ge m_1$, we can rewrite the functions $z_l$, $l=1,\ldots, m_1$, in the form (see (\ref{ri1}))
\begin{align}\label{zz1b}
z_l(x)&=\frac{2^{\alpha+\beta-m_1+l}\Gamma(\beta-m_1+l)}{(m_1-l)!} u^\alpha_{m_1-l}(x)\\
\nonumber &\qquad\quad+2^{m_2}\sum_{i=0}^{m_1-1}\left(\sum_{j=l}^{(l+m_2)\wedge m_1}\frac{(j-1)!\binom{m_2}{j-l}M_{i,j-1}}{(-2)^{i+j-l}}\right)
\frac{u^0_{\beta +i}(x)}{\Gamma(\beta+i+1)},
\end{align}
where $u_j^\lambda$, $\lambda \in \RR$ and $j\in \NN$, is the polynomial of degree $2j$ defined by
\begin{equation}\label{bas}
u_j^\lambda(x)=(x+\alpha-\lambda+1)_j(x+\beta+\lambda-j+1)_j.
\end{equation}
Analogously, if $\alpha$ is a nonnegative integer with $\alpha \ge m_2$, we can rewrite the functions $z_l$, $l=m_1+1,\ldots, m$, in the form (see (\ref{ri2}))
\begin{align}\label{zz2b}
z_l(x)&=\frac{2^{\alpha+\beta-m+l}\Gamma(\alpha-m+l)}{(m-l)!}
u^\alpha_{m-l}(x)\\
\nonumber &\qquad\quad +\sum_{i=0}^{m_2-1}\left(\sum_{j=l-m_1}^{l\wedge m_2}\frac{(j-1)!\binom{m_1}{l-j}N_{i,j-1}}{(-1)^{l-m_1-1}2^{i+j-l}}\right)
\frac{u^{\alpha-\beta}_{\alpha +i}(x)}{\Gamma(\alpha+i+1)}.
\end{align}
But it is easy to see that $u_j^\lambda(x)\in\mathbb{R}[\theta_x]$:
$$
u_j^\lambda(x)=\prod_{i=1}^j\left[(\alpha-\lambda+i)(\beta+\lambda-i+1)+\theta_x\right].
$$
Hence if  $\beta$ is a nonnegative integer with $\beta \ge m_1$, for $l=1,\ldots, m_1$, there exists a polynomial $Y_l$, such that $z_l(x)=Y_l(\theta _x)$, and analogously, if $\alpha$ is a nonnegative integer with $\alpha \ge m_2$, for $l=m_1+1,\ldots, m$, there also exists a polynomial $Y_l$, such that $z_l(x)=Y_l(\theta _x)$. This finishes the proof of the first part of the Theorem.

Now we have to prove that the order of $D_S$ is exactly $\deg \Xi +2(\bwr (\bm M)+\awr (\bm N)+1)$. This proof is quite technical at certain points, so it will be given separately in the Appendix.
\end{proof}

\subsection{Examples}
\ 

\noindent 
1. Consider $\bm M=(M_{i.j})_{i,j=0}^{m_1-1}$ and $\bm N=(N_{i.j})_{i,j=0}^{m_2-1}$ in the discrete Jacobi-Sobolev inner product \eqref{idslip} as the symmetric matrices with entries
\begin{equation*}
M_{i,j}=\begin{cases}
M_{i+j},&\quad i+j\leq m_1-1,\\
0,&\quad i+j>m_1-1,
\end{cases}\quad\mbox{and}\quad N_{i,j}=\begin{cases}
N_{i+j},&\quad i+j\leq m_2-1,\\
0,&\quad i+j>m_2-1,
\end{cases}
\end{equation*}
where $M_{m_1-1}, N_{m_2-1}\neq0$. Then the inner product reduces to the inner product defined by the moment functional
\begin{equation}\label{mff}
\mu_{\alpha-m_2,\beta-m_1}(x)+\sum_{i=0}^{m_1-1}M_i\delta_{-1}^{(i)}+\sum_{i=0}^{m_2-1}N_i\delta_{1}^{(i)}.
\end{equation}
We observe that in this case we can calculate directly the degrees of the polynomials $z_l$ defined by \eqref{zz1b} and \eqref{zz2b}. Indeed
$$
\deg z_l=\begin{cases}
2(\beta+m_1-l),&\quad l=1,\ldots,m_1,\\
2(\alpha+m-l),&\quad l=m_1+1,\ldots,m.
\end{cases}
$$
Then the degree of the polynomial $P$ defined by \eqref{def2p} is given by \eqref{ddp}. Using Lemma \ref{l6.3}  and Corollary \ref{cori1} we deduce that the minimal order of the differential operators having the orthogonal polynomials with respect to \eqref{mff} as eigenfunctions is at most $2(m_1\beta+m_2\alpha+1)$. For the case of $m_1=m_2=1$ we recover Koekoeks' result \cite{koekoe2}.
\medskip

\noindent
2. Consider $\bm M=\mbox{diag}\left(M_0,\ldots,M_{m_1-1}\right)$, $M_{m_1-1}\neq0$ and $\bm N=\mbox{diag}\left(N_0,\ldots,N_{m_2-1}\right)$, $N_{m_2-1}\neq0$ in the discrete Jacobi-Sobolev inner product \eqref{idslip}. The degrees of the polynomials $z_l$ in \eqref{zz1b} and \eqref{zz2b} are now given by
$$
\deg z_l=\begin{cases}
2(\beta+m_1-1),&\quad l=1,\ldots,m_1,\\
2(\alpha+m_2-1),&\quad l=m_1+1,\ldots,m.
\end{cases}
$$
In this case we can not apply Lemma \ref{lgp1} to calculate the degree of the polynomial $P$ in \eqref{def2p}. We have to use Definition \ref{wrM} and Corollary \ref{cori1} to calculate the order of the differential operator. But we already know how to calculate $\awr(\bm N)$ and $\bwr(\bm M)$ if $\bm M$ and $\bm N$ are diagonal matrices (see p. 86 of \cite{ddI1}). Call
\begin{align*}
I=\{j: 1\leq j\leq m_1, M_{j-1}=0\},&\quad J=\{j: 1\leq j\leq m_2, N_{j-1}=0\},\\
s=\#\{j: 1\leq j\leq m_1, M_{j-1}\neq0\},&\quad t=\#\{j: 1\leq j\leq m_2, N_{j-1}\neq0\}.
\end{align*}
Then the minimal order of the differential operators having the orthogonal polynomials as eigenfunctions is at most
\begin{equation*}
2\left[t(\alpha-m_2-1)+s(\beta-m_1-1)+2\sum_{i=1}^2\binom{m_i+1}{2}-2\sum_{j\in I}j-2\sum_{j\in J}j+1\right].
\end{equation*}
For the special case of $I=\{1,\ldots,m_1-1\}$ and $J=\{1,\ldots,m_2-1\}$ we have that $s=t=1$. Therefore the order of the differential operator is given by $2(\alpha+\beta+m_1+m_2-1)$ and we recover Bavinck's result \cite{Ba}.
\medskip

\noindent
3. As we mentioned in the Introduction there are some special situations where it is possible to find a differential operator of order lower than the one given by Theorem \ref{mainth3}. In this theorem the differential operator is obtained from the rational function $S$ (see \eqref{GGH}) by taking $\Xi=1$. However for special values of the parameters $\alpha$ and $\beta$ and the matrices $\bm M$ and $\bm N$, a \emph{better} rational function $S$ can be considered satisfying the three assumptions (\ref{ass0}), (\ref{ass1}) and (\ref{ass2}) in Theorem \ref{Teor1} in such a way that the order of the differential operator constructed using this new $S$ is less than $2(\bwr (\bm M)+\awr (\bm N)+1)$. Here we consider a couple of examples of this situation. In both examples we assume that $m_1=m_2$ and $\alpha=\beta\in \NN $, $\alpha\ge m_1$.

\medskip
\noindent
3.1. Take $m_1=m_2=1$. Then the matrices $\bm M$ and $\bm N$ reduce to numbers which, in addition, we assume they are equal, i.e. $\bm M=\bm N>0$. The polynomials $(q_n)_n$ are then orthogonal with respect to the Gegenbauer type positive measure
\begin{equation}\label{wesp}
(1-x^2)^{\alpha-1}\chi_{[-1,1]}+\bm M(\delta _{-1}+\delta_1).
\end{equation}
Our assumptions imply that (see (\ref{xit}))
\begin{equation}\label{xesp}
\xi_{x,j}^1=(-1)^j, \quad \xi_{x,j}^2=1,
\end{equation}
and
\begin{equation}\label{zesp}
z_1(x)=z_2(x)=4^\alpha(\alpha-1)!+\frac{2\bm M(x+1)_{2\alpha}}{\alpha!},
\end{equation}
where $z_1$ and $z_2$ are the polynomials  defined by (\ref{zz1b}) and (\ref{zz2b}), respectively. Hence we have for the polynomials $Y_1$ and $Y_2$ satisfying $Y_1(\theta_x)=z_1(x)$, $Y_2(\theta_x)=z_2(x)$, respectively, that $Y_1=Y_2$ and both have degree exactly $\alpha$. In particular, we have
\begin{equation}\label{oa}
\Omega(x)=-2z_1(x-1)z_1(x-2),
\end{equation}
where $\Omega$ is the determinant defined by (\ref{defOmb}).

The rational function $S$ (see \eqref{GGH}) in Theorem \ref{mainth3} is now a polynomial of degree 1; more precisely $S(x)=-\frac{1}{2}\sigma_{x-1/2}=-(x+\alpha-1)$. For $\Xi=1$, the associated differential operator $D_S$ in Theorem \ref{mainth3} has order $2(\bwr (\bm M)+\awr (\bm N)+1)=4\alpha +2$. However, for this example there is a better choice for the function $S$, in the sense that one can construct from the new $S$ a differential operator of order $2\alpha +2$ for which the orthogonal polynomials $(q_n)_n$ are eigenfunctions. Indeed, consider the rational function
\begin{equation}\label{defnS}
S(x)=\frac{\sigma_{x-1/2}R(x)}{\Omega(x)},
\end{equation}
where $R$ is the polynomial defined by
$$
R(x)=4^{\alpha-1}(\alpha-1)!+\frac{\bm M(x-1)_\alpha (x+\alpha)_\alpha}{2\alpha!}.
$$
We now check the three assumptions (\ref{ass0}), (\ref{ass1}) and (\ref{ass2}) in Theorem \ref{Teor1}. The first assumption (\ref{ass0}) is trivial since $S(x)\Omega(x)=\sigma_{x-1/2}R(x),$ which is obviously a polynomial. A simple computation shows that the polynomial $R$ satisfies the difference equation
$$
2\sigma_{x+1/2}R(x+1)+2\sigma_{x+3/2}R(x+2)=\sigma_{x+1}z_1(x).
$$
Using this difference equation together with (\ref{xesp}), (\ref{zesp}) and (\ref{oa}) one gets
\begin{equation}\label{Mesp}
M_1(x)=M_2(x)=\frac{1}{4}\sigma_{x+1}=\frac{2x+2\alpha+1}{4},
\end{equation}
where $M_1$ and $M_2$ are the functions defined by (\ref{emeiexp}). The second assumption (\ref{ass1}) is now straightforward by taking $\tilde M_1(x)=\tilde M_2(x)=1/4$.

In this case, the difference equation $\lambda_x-\lambda_{x-1}=S(x)\Omega(x)$ can be easily solved to get
\begin{equation}\label{lesp}
\lambda_x=4^{\alpha-1}(\alpha-1)!x(x+2\alpha-1)+\frac{\bm M(x-1)_{2\alpha+2}}{2(\alpha+1)!}.
\end{equation}
If we write $Q(x)=2\lambda_x+z_1(x)M_1(x)+z_2(x)M_2(x)$, (\ref{zesp}), (\ref{Mesp}) and (\ref{lesp})  give
$$
Q(x)=2\cdot4^{\alpha-1}(\alpha-1)!(\theta_x+2\alpha+1)+\frac{\bm M(x+1)_{2\alpha}}{(\alpha+1)!}\left[\theta_x+(\alpha+1)(2\alpha+1)\right].
$$
It is now easy to see that $I^{\alpha+\beta}(Q)=Q$ and hence $Q$ is actually a polynomial in $\theta _x$. That is, there exists a polynomial $P_S$ such that
$P_S(\theta_x)=Q(x)$, and hence the third assumption (\ref{ass2}) holds. Moreover, since $Q$ has degree $2\alpha +2$, the polynomial $P_S$ has degree exactly $\alpha+1$.

Theorem 3.1 gives that the orthogonal polynomials $(q_n)_n$ are eigenfunctions of the differential operator given by $D_{q,S}$ (\ref{Dq}).
Since in this example $P_S$ is a polynomial of degree $\alpha+1$, $\tilde M_1(x)=\tilde M_2(x)=1/4$, $Y_1(x)=Y_2(x)$ is a polynomial of degree
$\alpha$ and the $\D$-operators for the Jacobi polynomials (see (\ref{dopjac})) are both differential operators of order 1, we deduce that the differential operator $D_{q,S}$ (\ref{Dq}) has order equal to $2\alpha +2$. Hence, for this example the rational function $S$ (\ref{defnS}) provides for the orthogonal polynomials $(q_n)_n$ a differential operator of order less than the one constructed from the function $S$ in Theorem \ref{mainth3}. That orthogonal polynomials $(q_n)_n$ with respect to the Gegenbauer type measure
\eqref{wesp} are eigenfunctions of a differential operator of order $2\alpha+2$ was first proved by
R. Koekoek in 1994 \cite{koe} (the case $\alpha=1$ was discovered by H. Krall in 1940 \cite{Kr2}).

\medskip

\noindent
3.2. The following example is new, as far as the authors know. Consider $m_1=m_2=2$ and $\alpha=\beta\in\mathbb{N}, \alpha\geq 2$. Then we have $2\times2$ matrices $\bm M$ and $\bm N$. Consider for simplicity the case when
$$
\bm M=\begin{pmatrix} \bm M_{0} & \bm M_{1}\\0& 0\end{pmatrix},\quad \bm N=\begin{pmatrix} \bm M_{0} & -\bm M_{1}\\0& 0\end{pmatrix},\quad \bm M_1\neq0,\bm M_0\neq \bm M_1.
$$
The polynomials $(q_n)_n$ are then (left) orthogonal with respect to the inner product (see \eqref{idslip})
\begin{align*}
\langle p,q\rangle&=\int_{-1}^1p(x)q(x)(1-x^2)^{\alpha-2}dx\\
&\hspace{2cm}+\left[p(-1)q(-1)+p(1)q(1)\right]\bm M_0+\left[p(-1)q'(-1)-p(1)q'(1)\right]\bm M_1.
\end{align*}
Again, our assumptions imply \eqref{xesp} and
\begin{align}
\label{zesp1} z_1(x)&=z_3(x)=\frac{4^\alpha(\alpha-2)!}{2}(x+1)(x+2\alpha)+\frac{4(\bm M_0-\bm M_1)}{\alpha!}(x+1)_{2\alpha},\\
\label{zesp2} z_2(x)&=z_4(x)=4^\alpha(\alpha-1)!+\frac{4\bm M_1}{\alpha!}(x+1)_{2\alpha},
\end{align}
where $z_1, z_2$ and $z_3, z_4$  are the polynomials  defined by (\ref{zz1b}) and (\ref{zz2b}), respectively. Hence we have for the polynomials $Y_i, i=1,2,3,4,$ satisfying $Y_i(\theta_x)=z_i(x), i=1,2,3,4$, that $Y_i$ has degree exactly $\alpha$ for all $i=1,2,3,4$.

The associated differential operator $D_S$ in Theorem \ref{mainth3} has order $2(\bwr (\bm M)+\awr (\bm N)+1)=4\alpha +2$. However, again, there is a better choice for the function $S$, in the sense that one can construct from this new $S$ a differential operator of order $2\alpha +2$ for which the orthogonal polynomials $(q_n)_n$ are eigenfunctions. Indeed, consider the rational function
\begin{equation*}\label{defnS2}
S(x)=\frac{\sigma_{x-3/2}R(x)}{\Omega(x)},
\end{equation*}
where $R$ is the polynomial defined by
\begin{align*}
R(x)=16^{\alpha-1}(\alpha-1)!(\alpha-2)!&+2\cdot 4^{\alpha-1}\bm M_0(x-1)_{\alpha-1}(x+\alpha-1)_{\alpha-1}\\
&-4^{\alpha-1}\frac{\bm M_1}{\alpha}(x-2)_\alpha (x+\alpha-1)_\alpha.
\end{align*}
The three assumptions (\ref{ass0}), (\ref{ass1}) and (\ref{ass2}) in Theorem \ref{Teor1} can be easily checked using that
\begin{align*}
M_1(x)&=M_3(x)=\sigma_{x+1}\left(-\frac{\bm M_1}{(\alpha-1)!}(x+3)_{2\alpha-4}\left[\theta_x+(\alpha+1)(2\alpha-1)\right]\right),\\
M_2(x)&=M_4(x)=\sigma_{x+1}\left(2\cdot4^{\alpha-2}(\alpha-2)!+\frac{\bm M_0-\bm M_1}{(\alpha-1)!}(x+3)_{2\alpha-4}\left[\theta_x+(\alpha+1)(2\alpha-1)\right]\right),\\
\tilde M_1(x)&=\tilde M_3(x)=-\frac{\bm M_1}{(\alpha-1)!}(x+3)_{2\alpha-4}\left[\theta_x+(\alpha+1)(2\alpha-1)\right],\\
\tilde M_2(x)&=\tilde M_4(x)=2\cdot4^{\alpha-2}(\alpha-2)!+\frac{\bm M_0-\bm M_1}{(\alpha-1)!}(x+3)_{2\alpha-4}\left[\theta_x+(\alpha+1)(2\alpha-1)\right],\\
\lambda_x&=16^{\alpha-1}(\alpha-1)!(\alpha-2)!x(x+2\alpha-3)+2\frac{4^{\alpha-1}\bm M_0}{\alpha}(x-1)_{2\alpha}-\frac{4^{\alpha-1}\bm M_1}{\alpha(\alpha+1)}(x-2)_{2\alpha+2}.
\end{align*}
For the polynomial $Q(x)=2\lambda_x+\sum_{h=1}^4z_h(x)M_h(x)$, we have
\begin{align*}
Q(x)=&2\cdot16^{\alpha-1}(\alpha-1)!(\alpha-2)!(\theta_x+2(\alpha+1))\\
&+\left[\frac{4^\alpha}{\alpha}(x+3)_{2\alpha-4}\left(\theta_x^2+2(4\alpha^2+\alpha-1)\theta_x+\frac{1}{2}(2\alpha-1)_4\right)\right]\bm M_0\\
&-\left[2\frac{4^{\alpha-1}}{\alpha(\alpha+1)}(x+2)_{2\alpha-2}\left(\theta_x^2+2(4\alpha^2+7\alpha+2)\theta_x+\frac{1}{2}(2\alpha)_4\right)\right]\bm M_1.
\end{align*}
It is now easy to check that $I^{\alpha+\beta}(Q)=Q$ and hence $Q$ is actually a polynomial in $\theta _x$. That is, there exists a polynomial $P_S$ such that
$P_S(\theta_x)=Q(x)$, and hence (\ref{ass2}) holds. Moreover, since $Q$ has degree $2\alpha +2$, the polynomial $P_S$ has degree just $\alpha+1$.

Theorem 3.1 gives that the orthogonal polynomials $(q_n)_n$ are eigenfunctions of the differential operator given by $D_{q,S}$ (\ref{Dq}). From the definition of $\D$-operators for the Jacobi polynomials (see \eqref{dopjac}) it is easy to see that
$$
\sum_{h=1}^4\tilde M_h(D_p)\D_hY_h(D_p)=2\tilde M_1(D_p)\frac{d}{dx}Y_1(D_p)+2\tilde M_2(D_p)\frac{d}{dx}Y_2(D_p).
$$
Now, using the definition of $\tilde M_i(x),Y_i(x) ,i=1,2,$ it is straightforward to see that the degree of $\tilde M_1(x)Y_1(x)+\tilde M_2(x)Y_2(x)$ is at most $2\alpha$. Therefore, the order of the differential operator above is at most $2\alpha+1$. That means that, since $P_S$ is a polynomial of degree $\alpha+1$, the differential operator $D_{q,S}$ (\ref{Dq}) has order equal to $2\alpha +2$.

\medskip

Let us make some comments about the general case of the matrices
$$
\bm M=\begin{pmatrix} M_{00} & M_{01}\\M_{10} & M_{11}\end{pmatrix},\quad \bm N=\begin{pmatrix} N_{00} & N_{01}\\N_{10} & N_{11}\end{pmatrix}.
$$
We have been able to find a differential operator of lower order in the following situation:
$$
\alpha=\beta,\quad z_1(x)=z_3(x)\quad\mbox{and} \quad z_2(x)=z_4(x).
$$
In that case the polynomials $z_1(x)$ and $z_2(x)$ are given by
\begin{align*}
 z_1(x)&=\frac{4^\alpha(\alpha-2)!}{2}(x+1)(x+2\alpha)+\frac{4(M_{00}-M_{01})}{\alpha!}(x+1)_{2\alpha}-\frac{2(M_{10}-M_{11})}{(\alpha+1)!}(x)_{2\alpha+2},\\
z_2(x)&=4^\alpha(\alpha-1)!+\frac{4M_{01}}{\alpha!}(x+1)_{2\alpha}-\frac{2M_{11}}{(\alpha+1)!}(x)_{2\alpha+2},
\end{align*}
while the matrices $\bm M$ and $\bm N$ become
$$
\bm M=\begin{pmatrix} M_{00} & M_{01}\\M_{10} & M_{11}\end{pmatrix},\quad \bm N=\begin{pmatrix} M_{00} & -M_{01}\\-M_{10} & M_{11}\end{pmatrix}.
$$
If we call again $Y_i, i=1,2,$ the polynomials satisfying $Y_i(\theta_x)=z_i(x), i=1,2,$ we see that the degree of $Y_i, i=1,2,$ depends now on the parameters of the matrices $\bm M$ and $\bm N$. In fact we have
$$
\deg Y_1(x)=
\begin{cases}
\alpha+1 & \mbox{if} \quad M_{10}\neq M_{11},\\
\alpha & \mbox{if} \quad M_{10}=M_{11}\quad\mbox{and}\quad M_{00}\neq M_{01},\\
1 &  \mbox{if} \quad M_{10}=M_{11}\quad\mbox{and}\quad M_{00}=M_{01},
\end{cases}
$$
and
$$
\deg Y_2(x)=
\begin{cases}
\alpha+1 & \mbox{if} \quad M_{11}\neq 0,\\
\alpha & \mbox{if} \quad M_{11}=0\quad\mbox{and}\quad M_{01}\neq 0,\\
0 &  \mbox{if} \quad M_{11}=0\quad\mbox{and}\quad M_{01}=0.
\end{cases}
$$
There several combinations, but basically we can summarize them in 3 nontrivial cases.

\medskip
\noindent
\textit{Case 1}. There are three possible situations:
\begin{enumerate}
\item If $\deg Y_1(x)=1$ and $\deg Y_2(x)=\alpha$. Then $M_{11}=M_{10}=0$ and $M_{00}=M_{01}$.
\item If $\deg Y_1(x)=\alpha=\deg Y_2(x)$. Then $M_{11}=M_{10}=0$, $M_{01}\neq0$ and $M_{00}\neq M_{01}$ (this case is the one that we studied in detail above).
\item If $\deg Y_1(x)=\alpha+1$ and $\deg Y_2(x)=0$. Then $M_{11}=M_{01}=0$ and $M_{10}\neq0$.
\end{enumerate}
In any of the three situations above, we have that $2\deg P_S(x)=4\alpha+2$ but we can construct a differential operator of order $2\alpha+2$ for which the corresponding Jacobi-Sobolev orthogonal polynomials are eigenfunctions.

\medskip
\noindent
\textit{Case 2}. There are two possible situations:
\begin{enumerate}
\item If $\deg Y_1(x)=1$ and $\deg Y_2(x)=\alpha+1$. Then $M_{11}\neq0$, $M_{00}=M_{01}$ and $M_{10}=M_{11}$.
\item If $\deg Y_1(x)=\alpha+1=\deg Y_2(x)$ and $\det(\bm M)=0$. Then $M_{11}\neq0$, $M_{10}\neq M_{11}$ and $M_{00}M_{11}=M_{01}M_{10}$.
\end{enumerate}
In any of the two situations above, we have that
$2\deg P_S(x)=4\alpha+6$ but we can construct a differential operator of order $2\alpha+4$ for which the corresponding Jacobi-Sobolev orthogonal polynomials are eigenfunctions.

\medskip
\noindent
\textit{Case 3}. There are three possible situations:
\begin{enumerate}
\item If $\deg Y_1(x)=\alpha$ and $\deg Y_2(x)=\alpha+1$. Then $M_{11}\neq0$, $M_{10}=M_{11}$ and $M_{00}\neq M_{01}$.
\item If $\deg Y_1(x)=\alpha+1$ and $\deg Y_2(x)=\alpha$. Then $M_{11}=0$ and $M_{01},M_{10}\neq0$.
\item If $\deg Y_1(x)=\alpha+1=\deg Y_2(x)$ and $\det(\bm M)\neq0$. Then $M_{11}\neq0$, $M_{10}\neq M_{11}$ and $M_{00}M_{11}\neq M_{01}M_{10}$.
\end{enumerate}
In any of the three situations above, we have that
$2\deg P_S(x)=4\alpha+10$ but we can construct a differential operator of order $2\alpha+6$ for which the corresponding Jacobi-Sobolev orthogonal polynomials are eigenfunctions.

\medskip
For higher dimensions computational evidences indicate that we can find the same phenomenon of lowering the order of the differential operator when
$$
m_1=m_2,\quad \alpha=\beta,\quad\mbox{and}\quad z_l(x)=z_{m_1+l}(x),\quad l=1,\ldots,m_1.
$$
The matrices $\bm M$ and $\bm N$ are then related in the following way:
$$
\bm M=(M_{ij})_{i,j=0}^{m_1-1},\quad \bm N=((-1)^{i+j}M_{ij})_{i,j=0}^{m_1-1}.
$$
The situation gets more complicated and with many more possibilities. The complete study of the general situation is out of the scope of this paper and it will be pursued elsewhere.

\appendix
\section*{Appendix}
\renewcommand{\thesection}{A}

In this appendix we will give the proof of Lemma \ref{l5.2} and the proof of the last part of Theorem \ref{mainth3}.

\subsection*{Proof of Lemma \ref{l5.2}}

In this subsection we will use the following notation. Given a finite set of positive integers $F=\{f_1,\ldots, f_m\}$ (hence $f_i\not=f_j$, $i\not =j$), the expression
\begin{equation}\label{defdosf}
  \begin{array}{@{}c@{}lccc@{}c@{}}
    & &&\hspace{-1.3cm}{}_{j=1,\ldots , m} \\
    \dosfilas{ z_{f,j} }{f\in F}
  \end{array}
\end{equation}
inside a matrix or a determinant denotes the submatrix defined by
$$
\begin{pmatrix}
z_{f_1,1}&z_{f_1,2}&\cdots&z_{f_1,m}\\
\vdots&\vdots&\ddots&\vdots\\
z_{f_m,1}&z_{f_m,2}&\cdots&z_{f_m,m}
\end{pmatrix}.
$$
Given $m$ numbers $u_i$, $i=1,\ldots, m$, and two nonnegative integers $m_1$ and $m_2$ with $m_1+m_2=m$,
we form the pair $\U=(U_1,U_2)$, where $U_1$ is the $m_1$-tuple
$U_1=(u_1,\ldots, u_{m_1})$ and  $U_2$ is the $m_2$-tuple
$U_2=(u_{m_1+1},\ldots, u_{m})$. We also write $\UU_1$ and $\UU_2$ for the sets
\begin{equation}\label{ppmm}
\UU_1=\{1,\ldots, m_1\},\quad \UU_2=\{m_1+1,\ldots, m\}.
\end{equation}

The proof of Lemma \ref{l5.2} is based in the following technical Lemma.

\begin{lemma}\label{lgp1} Let $Y_1, Y_2, \ldots, Y_m,$ be nonzero polynomials satisfying $\deg Y_i=u_{i}$, $i=1,\ldots, m$. Write $r_i$ for the leading coefficient of $Y_i, 1\leq i\leq m$. For real numbers $\alpha,\beta$, consider the rational function $P$ defined by
\begin{equation}\label{def2p}
P(x)=\frac{\left|
  \begin{array}{@{}c@{}lccc@{}c@{}}
    & &&\hspace{-1.5cm}{}_{j=1,\ldots,m} \\
    \dosfilas{N_{1;x-j}^{\alpha;m-j}N_{2;x-1}^{\beta;j-1}Y_{i}(\theta _{x-j}) }{i\in \UU_1} \\
    \dosfilas{Y_{i}(\theta _{x-j}) }{i\in \UU_2}
  \end{array}
   \hspace{-.5cm}\right|}{p(x)q(x)},
\end{equation}
where $p$ and $q$ are the polynomials (\ref{def1p}) and (\ref{def1q}), respectively. The determinant (\ref{def2p}) should be understood in the manner explained above (see (\ref{defdosf})). Then $P$ is a polynomial of degree at most
\begin{equation}\label{ddp}
d=2\sum_{u\in U_1,U_2}u-2\sum_{i=1}^2\binom{m_i}{2}.
\end{equation}
Moreover, if the elements in $U_1$ and $U_2$ are different (i.e. $u_i\not =u_j$, for $i\not =j$, $i,j\in\{1,\ldots ,m_1\}$,
and $u_i\not =u_j$, for $i\not =j$, $i,j\in\{m_1+1,\ldots ,m\}$), then $P$ is a polynomial of degree exactly
(\ref{ddp}) with leading coefficient given by
$$
r=V_{U_1}V_{U_2}\prod_{i=1}^mr_i,
$$
where $V_X$ denotes the Vandermonde determinant associated to the set $X=\{x_1,\ldots,x_K\}$ defined by $V_X=\displaystyle\prod_{i<j}(x_j-x_i)$.
\end{lemma}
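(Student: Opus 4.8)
\subsection*{Proof strategy for Lemma \ref{lgp1}}

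The plan is to peel the numerator determinant in \eqref{def2p} apart by a block (Laplace) expansion, reduce it to a sum of products of \emph{alternant} determinants in the variables $\theta_{x-j}$, and then treat polynomiality, the degree bound and the leading coefficient separately. First I would expand the $m\times m$ determinant by choosing, for the top $m_1$ rows (indices in $\UU_1$), an $m_1$-subset $S$ of the columns $\{1,\dots,m\}$, and for the bottom $m_2$ rows (indices in $\UU_2$) the complementary set $S^c$. The crucial observation is that in each top minor the factor $N_{1;x-j}^{\alpha;m-j}N_{2;x-1}^{\beta;j-1}$ depends only on the column $j$, so it pulls out of column $j$; writing $A_S=\det\big(Y_i(\theta_{x-j})\big)_{i\in\UU_1,j\in S}$ and $A_{S^c}=\det\big(Y_i(\theta_{x-j})\big)_{i\in\UU_2,j\in S^c}$, the numerator $\Delta$ of \eqref{def2p} becomes
\[
\Delta(x)=\sum_{S}\epsilon(S)\Big(\prod_{j\in S}N_{1;x-j}^{\alpha;m-j}N_{2;x-1}^{\beta;j-1}\Big)A_S\,A_{S^c},\qquad \epsilon(S)=\pm1.
\]
Since $(m-j)+(j-1)=m-1$, every prefactor has degree $m_1(m-1)$ independently of $S$; and since $A_S$ is antisymmetric in $\{\theta_{x-j}:j\in S\}$ it is divisible by $\prod_{j<j'\in S}(\theta_{x-j}-\theta_{x-j'})$ with symmetric quotient, so $\deg_x A_S=2\sum_{i\in\UU_1}u_i-\binom{m_1}{2}$ and likewise $\deg_x A_{S^c}=2\sum_{i\in\UU_2}u_i-\binom{m_2}{2}$. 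Hence every summand has the same degree $m_1(m-1)+2\sum u-\binom{m_1}{2}-\binom{m_2}{2}$, which after subtracting $\deg(pq)=2\binom{m_1}{2}+\binom{m}{2}$ is exactly $d$ in \eqref{ddp}.

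Next I would prove that $p\,q$ divides $\Delta$, so that $P$ is a polynomial. For the factor $q$ the key remark is the identity $q(x)=c\prod_{1\le j<j'\le m}(\theta_{x-j}-\theta_{x-j'})$ for a nonzero constant $c$, obtained from $\theta_{x-j}-\theta_{x-j'}=(j'-j)\sigma_{x-(j+j')/2+1}$ together with a short combinatorial check that the multiset of $\sigma$-factors on the two sides coincides. Thus it suffices to show that $\Delta$ vanishes, to the correct order, whenever $\theta_{x-j}=\theta_{x-j'}$. At such a symmetry point the bottom rows already have equal entries in columns $j$ and $j'$; for the top rows the involution identities \eqref{Iprop1}--\eqref{ABpropsH} force the two prefactors $N_{1;x-j}^{\alpha;m-j}N_{2;x-1}^{\beta;j-1}$ and $N_{1;x-j'}^{\alpha;m-j'}N_{2;x-1}^{\beta;j'-1}$ to reflect into one another, so that columns $j$ and $j'$ become proportional and $\Delta$ vanishes; when several reflection pairs collapse to the same $x_0$ (the source of multiplicities in $q$) I would upgrade this to the required order of vanishing by the geometric-to-algebraic multiplicity argument already used in the second step of the proof of Theorem \ref{mainth1}. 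For the factor $p$ the zeros come from the Pochhammer symbols $N_1,N_2$ in the top block; at each such zero a block of top-block entries vanishes, the rank of $\Delta$ drops, and the same multiplicity argument shows $\Delta$ is divisible by $p$ to the order prescribed by \eqref{def1p}. Combined with the degree count this yields $\deg P\le d$.

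Finally, assuming the $u_i$ distinct within each block, I would compute the leading coefficient. Distinctness makes each alternant nondegenerate: the leading $x$-coefficient of $A_S$ is $\prod_{i\in\UU_1}r_i$ times the leading term of the generalized Vandermonde $\det(\theta_{x-j}^{u_i})$, which factors through a Schur polynomial at the all-ones point and produces, up to a universal constant, the Vandermonde $V_{U_1}$ in the degrees (together with a column-index Vandermonde $V_S$); similarly for $A_{S^c}$. Summing over $S$, the column-index Vandermondes, the signs $\epsilon(S)$ and the prefactor leading signs assemble, by a Laplace expansion of the full Vandermonde on $\{1,\dots,m\}$ (a Cauchy--Binet bookkeeping), into a single constant that cancels against the leading coefficient of $p\,q$ (here $p$ has leading coefficient $(-1)^{\binom{m_1}{2}}$ and $q$ has leading coefficient $(-1)^{\binom{m}{2}}2^{\binom{m}{2}}$). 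What survives is precisely $r=V_{U_1}V_{U_2}\prod_{i=1}^m r_i$, and in particular $r\neq0$, so $\deg P=d$ exactly. The hard part will be the polynomiality, and within it the divisibility by $q$ at the coincident symmetry points: that $\Delta$ vanishes is immediate for a single reflection pair, but matching the exact multiplicity of $q$ when several pairs collapse to one $x_0$ requires the more delicate rank/order-of-vanishing analysis; by contrast the degree and leading-coefficient computations are routine once the block expansion and the generalized-Vandermonde leading terms are in hand.
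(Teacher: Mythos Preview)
Your strategy is correct. The paper itself gives no self-contained argument here, deferring entirely to Lemma~3.3 of \cite{dudh}; your block Laplace expansion into products of alternants in the variables $\theta_{x-j}$, combined with the rank-drop/order-of-vanishing argument for divisibility by $p$ and $q$, is the natural approach and goes through.

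Two small reassurances where you hedged. First, at a simple root $x_0$ of $\theta_{x-j}-\theta_{x-j'}$ your two column prefactors are not merely proportional but \emph{equal}: setting $a=x_0+\alpha$ one has $a+(x_0+\beta)=j+j'-1$, and a direct check shows both $\pi_j(x_0)$ and $\pi_{j'}(x_0)$ equal $(-1)^{m-1}\prod_{l}(a-l)$ over the same multiset of integers~$l$; hence columns $j$ and $j'$ of the full matrix coincide. Second, the multiplicity upgrade is clean: the pairs $(j,j')$ contributing to a fixed root of $q$ all share the same value of $j+j'$ and are therefore pairwise disjoint, so the rank drops by exactly the multiplicity and the order of vanishing matches.

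One place that is less ``routine'' than you indicate is the leading-coefficient bookkeeping. Because $\theta_{x-j}=x^2+O(x)$, the naive top power of each alternant is singular; you must go to subleading order (or, equivalently, factor out $\prod_{j<j'\in S}(\theta_{x-j}-\theta_{x-j'})$ first) to extract the column Vandermonde $V_S=\prod_{j<j'\in S}(j'-j)$ together with the Schur value $s_\lambda(1,\dots,1)$ producing $V_{U_1}$. Reassembling $\sum_S \epsilon(S)\,(\text{prefactor signs})\,V_S V_{S^c}$ into a Laplace expansion of the full numerical Vandermonde on $\{1,\dots,m\}$, and cancelling that against the leading coefficients of $p$ and $q$, requires careful sign tracking, but is standard once set up.
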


\begin{proof}
The Lemma can be proved using the same approach as in the proof of Lemma 3.3 in \cite{dudh}.
\end{proof}

\begin{proof}[Proof of Lemma \ref{l5.2}]

Consider the sets $\UU_j$, $j=1,2$, given by (\ref{ppmm}). By construction (see (\ref{cons})),
we have that for $h\in \UU_j$, the $\mathcal{D}$-operator $\D_h$ is
defined by the sequence $(\varepsilon_{n,j})_n$ (see (\ref{eps1}) and (\ref{eps2})).

Since the polynomial $\Xi$ is invariant under the action of $\I^{\alpha+\beta-m-1}$, we have
\begin{equation}\label{Qprop}
\I^{\alpha+\beta+i}\big(\Xi(x-j)\big)=\Xi(x+m+i+j+1).
\end{equation}
As a consequence of \eqref{Iprop1} and \eqref{ABpropsH} we have
\begin{equation}\label{qprop}
\I^{\alpha+\beta+i}(q(x-j))=(-1)^{\binom{m}{2}}q(x+i+j+m+1),
\end{equation}
and
\begin{equation}\label{pprop}
\I^{\alpha+\beta+i}(p(x-j))=p(x+i+j+m+1),
\end{equation}
where the polynomials $p$ and $q$ are defined by (\ref{def1p}) and (\ref{def1q}), respectively.
\medskip

Now, we check the first assumption (\ref{ass0}) in Theorem \ref{Teor1}, i.e.:
$S(x)\Omega (x)$ is a polynomial in $x$.
From the definition of $S(x)$ in \eqref{GGH} and $\Omega(x)$ in \eqref{casd1} it is straightforward to see, using \eqref{xit}, that
\begin{align}
\label{SOm}& S(x)\Omega(x)=\frac{\sigma_{x-\frac{m-1}{2}}\Xi(x)}{p(x)q(x)}\left|
  \begin{array}{@{}c@{}lccc@{}c@{}}
    & &&\hspace{-1.5cm}{}_{j=1,\ldots,m} \\
    \dosfilas{N_{1;x-j}^{\alpha;m-j}N_{2;x-1}^{\beta;j-1}Y_{i}(\theta _{x-j}) }{i\in \UU_1} \\
    \dosfilas{Y_{i}(\theta _{x-j}) }{i\in \UU_2}
  \end{array}
   \hspace{-.5cm}\right|.
\end{align}
Therefore,
\begin{equation}\label{psw}
S(x)\Omega(x)=\sigma_{x-\frac{m-1}{2}}\Xi (x)P(x),
\end{equation}
where $P$ is the rational function (\ref{def2p}) defined in Lemma \ref{lgp1}. According to this lemma, $P$ is actually a polynomial and thus $S(x)\Omega(x)$ is also a polynomial.

Now we check the second assumption (\ref{ass1}) in Theorem \ref{Teor1}, i.e.: polynomials  $\tilde{M}_1,\ldots,\tilde{M}_m,$ exist such that
\begin{equation*}
M_h(x)=\sigma_{x+1}\tilde{M}_h(\theta_x),\quad h=1,\ldots,m.
\end{equation*}
A simple computation using (\ref{xit}) and Lemma 3.4 in \cite{
ddI2} shows that $M_h$ is actually a polynomial in $x$. It is now sufficient to see that
\begin{equation*}\label{InvMh}
\I^{\alpha+\beta}(M_h(x))=-M_h(x),\quad h=1,\ldots,m,
\end{equation*}
where $M_h(x), h=1,\ldots,m,$ are defined in \eqref{emeiexp}. Hence, $M_h(x), h=1,\ldots,m,$ according to the discussion after \eqref{inv}, is divisible by $\sigma_{x+1}$ and the quotient belongs to $\mathbb{R}[\theta_x]$.

We assume that the $h$-th $\mathcal{D}$-operator is $\mathcal{D}_1$ (similar proof for $\mathcal{D}_2$). As before, we can remove all the denominators in $M_h(x)$ in this case and rearrange the determinant to obtain
\begin{equation*}
M_h(x)=\sum_{j=1}^m(-1)^{h+j}\frac{\sigma_{x+j-\frac{m-1}{2}}\Xi(x+j)}{p(x+j)q(x+j)}N_{1;x}^{\alpha;m-j}N_{2;x+j-1}^{\beta;j-1}\left|
  \begin{array}{@{}c@{}lccc@{}c@{}}
        &&&\hspace{-1.5cm}{}_{l\neq h, r\neq j} \\
        \dosfilas{N_{1;x+j-r}^{\alpha;m-r}N_{2;x+j-1}^{\beta;r-1}Y_l(\theta_{x+j-r})}{l\in \UU_1\setminus\{h\}}\\
    \dosfilas{Y_l(\theta_{x+j-r})}{l\in \UU_2}
\end{array}
   \hspace{-0.5cm} \right|.
\end{equation*}
Hence, using \eqref{ABpropsH}, \eqref{Qprop}, \eqref{qprop} and \eqref{pprop}, we have
\begin{align*}
&\I^{\alpha+\beta}\big(M_h(x)\big)=-\sum_{j=1}^m(-1)^{h+j}(-1)^{\binom{m}{2}}\frac{\sigma_{x+\frac{m-1}{2}-j+2}\Xi(x+m-j+1)}{p(x+m-j+1)q(x+m-j+1)}\times\\
&\hspace{5cm}\times N_{2;x+m-j}^{\beta;m-j}N_{1;x}^{\alpha;j-1}\left|
  \begin{array}{@{}c@{}lccc@{}c@{}}
        &&&\hspace{-1.5cm}{}_{l\neq h, r\neq j} \\
        \dosfilas{N_{2;x+m-j}^{\beta;m-r}N_{1;x+r-j}^{\alpha;r-1}Y_l(\theta_{x-j+r})}{l\in \UU_1\setminus\{h\}}\\
    \dosfilas{Y_l(\theta_{x-j+r})}{l\in \UU_2}
\end{array}
   \hspace{-0.5cm} \right|\\
 &=-(-1)^{\binom{m}{2}}(-1)^{m-1}(-1)^{\binom{m-1}{2}}\sum_{j=1}^m(-1)^{h+j}\frac{\sigma_{x+j-\frac{m-1}{2}}\Xi(x+j)}{p(x+j)q(x+j)} \times\\
&\hspace{5cm}\times N_{1;x}^{\alpha;m-j}N_{2;x+j-1}^{\beta;j-1}\left|
  \begin{array}{@{}c@{}lccc@{}c@{}}
        &&&\hspace{-1.5cm}{}_{l\neq h, r\neq j} \\
        \dosfilas{N_{1;x+j-r}^{\alpha;m-r}N_{2;x+j-1}^{\beta;r-1}Y_l(\theta_{x+j-r})}{l\in \UU_1\setminus\{h\}}\\
    \dosfilas{Y_l(\theta_{x+j-r})}{l\in \UU_2}
\end{array}
   \hspace{-0.5cm} \right|\\
&=-(-1)^{m(m-1)}M_h(x)=-M_h(x).
\end{align*}
Note that we renamed the index $j$ ($j\to m-j+1$) in the second step and that we interchanged all columns in the determinant ($r\to m-r+1$), thus the corresponding change of signs.

Finally, we check the third assumption (\ref{ass2}) in Theorem \ref{Teor1}, i.e.: a polynomial $P_S$ exists such that
\begin{equation*}
P_S(\theta_x)=2\lambda_x+\sum_{h=1}^mY_h(\theta_x)M_h(x).
\end{equation*}
As it was pointed out in \cite{ddI2} (see (5.8)) it is sufficient to see that
$$
\I^{\alpha+\beta-1}\big(S(x)\Omega(x)\big)=-\big(S(x+m)\Omega(x+m)\big).
$$
From \eqref{SOm}, using \eqref{ABpropsH}, \eqref{Qprop}, \eqref{qprop} and \eqref{pprop} again, we have

\begin{align*}
& \I^{\alpha+\beta-1}\big(S(x)\Omega(x)\big)=-(-1)^{\binom{m}{2}}\frac{\sigma_{x+\frac{m+1}{2}}\Xi(x+m)}{p(x+m)q(x+m)}\left|
  \begin{array}{@{}c@{}lccc@{}c@{}}
    & &&\hspace{-1.5cm}{}_{j=1,\ldots,m} \\
    \dosfilas{N_{2;x+m-1}^{\beta;m-j}N_{1;x+j-1}^{\alpha;j-1}Y_{i}(\theta _{x+j-1}) }{i\in \UU_1} \\
    \dosfilas{Y_{i}(\theta _{x+j-1}) }{i\in \UU_2}
  \end{array}
   \hspace{-.5cm}\right|\\
=&-(-1)^{\binom{m}{2}}(-1)^{\binom{m}{2}}\frac{\sigma_{x+m-\frac{m-1}{2}}\Xi(x+m)}{p(x+m)q(x+m)}\left|
  \begin{array}{@{}c@{}lccc@{}c@{}}
    & &&\hspace{-1.5cm}{}_{j=1,\ldots,m} \\
    \dosfilas{N_{2;x+m-1}^{\beta;j-1}N_{1;x+m-j}^{\alpha;m-j}Y_{i}(\theta _{x+m-j}) }{i\in \UU_1} \\
    \dosfilas{Y_{i}(\theta _{x+m-j}) }{i\in \UU_2}
  \end{array}
   \hspace{-.5cm}\right|\\
=&-S(x+m)\Omega(x+m).
\end{align*}
\end{proof}

\subsection*{Proof of the last part of Theorem \ref{mainth3}}
It remained to prove the computation of the order of  the operator $D_{S}$ in Theorem \ref{mainth3}. For that, we will give three auxiliary lemmas.  We need first to introduce some notation. Given $m$ arbitrary polynomials $Y_1,\ldots , Y_m$, we will denote by $\cY $ the $m$-tuple of polynomials $(Y_1,\ldots, Y_m)$. The $m$-tuple formed by interchanging the polynomials $Y_i$ and $Y_j$ in $\cY$ is denoted by $\cY_{i\leftrightarrow j}$; the $m$-tuple formed by changing the polynomial $Y_i$ to $aY_i+bY_j$ in $\cY$, where $a$ and $b$ are real numbers, is denoted by $\cY_{i\leftrightarrow ai+bj}$; and the $m$-tuple formed by removing the polynomial $Y_i$ in $\cY$ is denoted by $\cY_{\{ i\} }$.

\begin{lemma}\label{l6.1}
Given $m$ arbitrary polynomials $Y_1,\ldots , Y_m$, we form the $m$-tuple of polynomials
$\cY =(Y_1,\ldots, Y_m)$ and consider the  operator $D_{q,S}=D_{q,S}(\cY )$ (\ref{Dq}). Then, for any numbers $a, b\in\mathbb{R}$ we have
\begin{align} \label{Dqinv1}
D_{q,S}(\cY )&=-D_{q,S}(\cY_{i\leftrightarrow j}),\\\label{Dqinv2}
D_{q,S}(\cY_{i\leftrightarrow ai+bj})&=aD_{q,S}(\cY).
\end{align}
\end{lemma}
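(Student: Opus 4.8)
The plan is to bypass the explicit expression (\ref{Dq}) and argue through the eigenfunction characterisation of $D_{q,S}$. By Theorem \ref{Teor1}, under the standing assumptions---which, by Lemma \ref{l5.2}, hold for \emph{every} tuple $\cY$ and the fixed $S$ at hand---the operator $D_{q,S}(\cY)\in\A$ acts on the polynomials $q_n$ of (\ref{qus}) by $D_{q,S}(\cY)(q_n)=\lambda_nq_n$ with $\lambda_n=P_S(\theta_n)$. Since $\Omega(n)\neq0$ forces $\deg q_n=n$, the family $(q_n)_n$ is a basis of $\PP$, and hence any element of $\A$ is determined by its action on it. Therefore both identities will drop out once I understand how the pair $(q_n,\lambda_n)$ transforms under the elementary operations on $\cY$, after which uniqueness reads off the operator.

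First I would record how the two determinants transform. In (\ref{qus}) and in the Casorati determinant (\ref{casd1}) the row labelled $l$ carries the factor $\xi^l$ and the operator $\D_l$; for this reason the identities are to be read for indices $i,j$ attached to the same $\D$-operator, i.e.\ with $\xi^i=\xi^j$ (the two blocks $\UU_1$ and $\UU_2$ of (\ref{ppmm})), which is precisely the regime used in the order computation. Granting this, interchanging $Y_i$ and $Y_j$ is an honest interchange of the two rows carrying them, both in (\ref{qus}) and in $\Omega$, so $q_n\mapsto-q_n$ and $\Omega\mapsto-\Omega$; while replacing $Y_i$ by $aY_i+bY_j$ (with $a\neq0$) is the row operation that adds $b$ times the $Y_j$-row to the $Y_i$-row and then scales the latter by $a$, so $q_n\mapsto aq_n$ and $\Omega\mapsto a\Omega$, the $b$-contribution vanishing as it duplicates the $Y_j$-row. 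Writing $c=-1$ in the first case and $c=a$ in the second, in both cases $q_n\mapsto cq_n$, $\Omega\mapsto c\Omega$, and $\Omega(n)\neq0$ persists because $c\neq0$.

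Next I would transport this to the eigenvalues. The defining relation $\lambda_x-\lambda_{x-1}=S(x)\Omega(x)$ together with (\ref{ass2}) (equivalently the difference relation (\ref{Pdiff2})) shows that $P_S$ depends on $\cY$ only through $\Omega$, and linearly so; fixing the additive constant that the difference equation leaves free by a homogeneous normalisation (for instance $\lambda_{x_0}=0$ at a base point), I obtain $P_S\mapsto cP_S$ and thus $\lambda_n\mapsto c\lambda_n$. Combining with the previous step and invoking uniqueness: for the swap ($c=-1$) the new operator $D'$ satisfies $D'(-q_n)=(-\lambda_n)(-q_n)$, i.e.\ $D'(q_n)=-\lambda_nq_n=-D_{q,S}(\cY)(q_n)$ for all $n$, giving $D_{q,S}(\cY_{i\leftrightarrow j})=-D_{q,S}(\cY)$, which is (\ref{Dqinv1}); for the combination ($c=a$) we get $D'(aq_n)=(a\lambda_n)(aq_n)$, i.e.\ $D'(q_n)=a\lambda_nq_n=a\,D_{q,S}(\cY)(q_n)$, giving $D_{q,S}(\cY_{i\leftrightarrow ai+bj})=a\,D_{q,S}(\cY)$, which is (\ref{Dqinv2}).

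The delicate point, and what I expect to be the main (if mild) obstacle, is the additive-constant freedom in $\lambda_x$ and hence in $P_S$: without a fixed normalisation the eigenvalue step only yields $P_S\mapsto cP_S+\kappa$, so the operator identities hold merely up to the summand $\tfrac{\kappa}{2}I$. Committing once and for all to the solution of the difference equation that is linear in its right-hand side (equivalently, prescribing a value at a base point) removes $\kappa$ and gives the identities on the nose; in any case the ambiguity is a multiple of the identity and is irrelevant for the order count that is our real aim. Should one prefer to avoid the eigenfunction route, the alternative is to differentiate the explicit formula (\ref{Dq}) in $\cY$, tracking $P_S$, the cofactor functions $M_h$ of (\ref{emeiexp}) and the associated $\tilde M_h$; this is sound but heavier, as it requires the multilinear and alternating behaviour of the minors $M_h$ under the same row operations, with separate bookkeeping according to whether the index $h$ lies in $\{i,j\}$ or not.
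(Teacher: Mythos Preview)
The paper offers no argument here beyond a pointer to Lemma~3.4 of \cite{ddI1}; there the result is obtained by a direct verification from the explicit formula (\ref{Dq}), tracking how $\Omega$, the cofactors $M_h$, the polynomials $\tilde M_h$ and $P_S$ transform under each elementary row operation on $\cY$. Your route through the eigenfunction relation $D_{q,S}(q_n)=\lambda_nq_n$ is genuinely different and conceptually cleaner: once $(q_n)_n$ is a basis and both $q_n$ and $\lambda_n$ scale by the same factor $c$, uniqueness of an operator on a basis finishes the job. The direct approach buys the statement for \emph{arbitrary} tuples $\cY$, with no nonvanishing hypothesis and no additive constant to normalise away; your approach buys transparency and avoids the case-by-case bookkeeping of the minors $M_h$ according to whether $h\in\{i,j\}$ or not.

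There is, however, one genuine gap. You invoke Theorem~\ref{Teor1} and then use that $(q_n)_n$ is a basis of $\PP$, but Theorem~\ref{Teor1} requires $\Omega(n)\neq0$ for all $n\ge0$, whereas Lemma~\ref{l6.1} is stated for \emph{arbitrary} polynomials $Y_l$; Lemma~\ref{l5.2} guarantees (\ref{ass0})--(\ref{ass2}) for every tuple, but says nothing about the nonvanishing of $\Omega$. To close this you can argue by Zariski density: for $Y_l$ of fixed degrees the operator $D_{q,S}$ has bounded order, hence is determined by its action on finitely many $q_n$, so it suffices that finitely many $\Omega(n)$ be nonzero --- a nonempty Zariski-open condition on the coefficients of the $Y_l$; since both sides of the claimed operator identity depend polynomially on these coefficients, equality on this dense open set extends to all tuples. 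Your remarks on the additive-constant freedom in $P_S$ and on the restriction to indices $i,j$ within the same block $\UU_1$ or $\UU_2$ (left tacit in the paper, but essential when $\xi^i\neq\xi^j$, and indeed the only case used downstream) are correct and well observed.
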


\begin{proof}
It is analogous to the proof of Lemma 3.4 in \cite{ddI1}.
\end{proof}

\begin{lemma}\label{l6.2} Using the same notation as the one employed in Theorem \ref{Teor1}, we write
$$
\Psi_j^h(x)=\xi_{x-j,m-j}^hS(x)
\det\left(\xi_{x-r,m-r}^{l}Y_l(\theta_{x-r})\right)_{l\in \II_h;r\in \II_j}, \quad h,j=1,\ldots , m,
$$
and $\Omega _g^h$, $h=1,\ldots , m$, $g=0,1,2,\ldots$, for the particular case of $\Omega$ when $Y_h(x)=x^g$.
We assume that $\theta_x$ is a polynomial in $x$ of degree $2$, and that $\Psi_j^h$, $h,j=1,\ldots , m$, are polynomials in $x$, and we write $\tilde d=\max\{\deg \Psi_j^h:h,j=1,\ldots , m\}$.
Then, $M_h$ and $S\Omega _g^h$, $h=1,\ldots , m$, $g=0,1,2,\ldots$, are also polynomials in $x$. In addition, we assume that for each $h$ there exists $g_h$ such that $S\Omega_{g}^h=0$, $g=0,\ldots, g_h$, and that the degree of $S\Omega _g^h$ is at most $2(g-g_h)+\deg (S\Omega_{g_h}^h)$
for $g\le \tilde d-\deg (S\Omega_0^h)$. Then $M_h$ is a polynomial of degree at most $\deg (S\Omega_{g_h}^h)-2g_h$.
\end{lemma}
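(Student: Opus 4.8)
The plan is to express the highest-degree coefficients of both $M_h$ and the polynomials $S\Omega_g^h$ as linear functionals of one and the same finite family of scalars, and then to let the degree hypothesis force the top coefficients of $M_h$ to vanish. The whole argument rests on recasting the two very different-looking combining operations (a shift for $M_h$, a multiplication by $\theta_{x-j}^g$ for $S\Omega_g^h$) in a common ``moment'' language.

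First I would record two determinantal identities. Expanding the Casorati determinant $\Omega$ in (\ref{casd1}) along its $h$-th row and comparing with (\ref{emeiexp}) and with the definition of $\Psi_j^h$, one checks
\[
S(x)\Omega(x)=\sum_{j=1}^m(-1)^{h+j}Y_h(\theta_{x-j})\,\Psi_j^h(x),\qquad M_h(x)=\sum_{j=1}^m(-1)^{h+j}\Psi_j^h(x+j),
\]
and, specializing $Y_h(x)=x^g$ in the first identity, $S\Omega_g^h(x)=\sum_{j=1}^m(-1)^{h+j}\theta_{x-j}^g\,\Psi_j^h(x)$. Since each $\Psi_j^h$ is a polynomial by hypothesis and $\theta_x$ is a polynomial, both $M_h$ and $S\Omega_g^h$ are polynomials, which already settles the first assertion. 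Next I would pass to coefficients: writing $c_{j,k}$ for the coefficient of $x^k$ in $\Psi_j^h$ and introducing the moments $\mathfrak{m}_{k,p}=\sum_{j=1}^m(-1)^{h+j}c_{j,k}\,j^{\,p}$, the binomial expansion of $(x+j)^k$ shows that the coefficient of $x^{\tilde d-s}$ in $M_h$ equals $\sum_{t=0}^{s}\binom{\tilde d-s+t}{\tilde d-s}\,\mathfrak{m}_{\tilde d-s+t,\,t}$, a functional supported on the ``diagonal'' moments of fixed weight $k-p=\tilde d-s$. Since $\deg\theta_x=2$, the coefficient of $x^{2g-t}$ in $\theta_{x-j}^g$ is a polynomial in $j$ of degree at most $t$; writing its coefficients as $a_{g,t,p}$, the coefficient of $x^{\tilde d+2g-s'}$ in $S\Omega_g^h$ equals $\sum_{t=0}^{s'}\sum_{p=0}^{t}a_{g,t,p}\,\mathfrak{m}_{\tilde d-s'+t,\,p}$. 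Thus both objects are explicit linear functionals of the same moments.

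The crux is to prove that the functional giving the coefficient of $x^{\tilde d-s}$ in $M_h$ lies in the linear span of the functionals giving the coefficients of $x^{\tilde d+2g-s'}$ in $S\Omega_g^h$ over the index range $0\le g\le s'\le s$. I would argue by induction on $s$ (equivalently, downward on the weight $\tilde d-s$). The lowest-weight part of the level-$s'$ coefficient of $S\Omega_g^h$ is $\sum_{p=0}^{s'}a_{g,p,p}\,\mathfrak{m}_{\tilde d-s'+p,\,p}$, and the decisive observation is that $a_{g,p,p}$, which collects the top $j^{\,p}$ contribution of the $x^{2g-p}$ coefficient of $\theta_{x-j}^g$, is a polynomial in $g$ of degree exactly $p$ (the dominant contribution comes from selecting $p$ of the $g$ quadratic factors to supply the linear-in-$j$ term, producing a factor $\binom{g}{p}$). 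Hence the matrix $(a_{g,p,p})_{0\le g,p\le s'}$ is, up to nonzero diagonal factors, a Vandermonde matrix in $g=0,1,\dots,s'$; letting $g$ range over $0,\dots,s'$ therefore realizes every prescribed weight-$(\tilde d-s')$ combination, in particular the diagonal combination occurring in $M_h$, while the higher-weight contamination is removed by the lower-level functionals supplied by the induction hypothesis. This is precisely where the quadratic nature of $\theta_x$ is indispensable and where the bookkeeping is heaviest, so it is the main obstacle.

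Finally I would invoke the degree hypothesis. Set $B=\deg(S\Omega_{g_h}^h)-2g_h$. For every index $(g,s')$ occurring above one has $s'\le s$, and by the assumed bound $\deg S\Omega_g^h\le 2(g-g_h)+\deg(S\Omega_{g_h}^h)=2g+B$; the vanishing of the $S\Omega_g^h$ for small $g$ (whence $\deg S\Omega_0^h=-\infty$) guarantees that this bound is available for all the $g$ that occur. Consequently, whenever $s<\tilde d-B$ we have $\tilde d+2g-s'\ge \tilde d+2g-s>2g+B\ge\deg S\Omega_g^h$, so the coefficient of $x^{\tilde d+2g-s'}$ in $S\Omega_g^h$ vanishes; by the spanning statement the coefficient of $x^{\tilde d-s}$ in $M_h$ vanishes as well. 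As this holds for all $s<\tilde d-B$, the highest possibly nonzero coefficient of $M_h$ is at degree $B$, giving $\deg M_h\le B=\deg(S\Omega_{g_h}^h)-2g_h$, as required.
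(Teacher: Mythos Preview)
Your argument is correct and self-contained, whereas the paper does not prove the lemma in place but simply refers to Lemma~3.4 of \cite{ddI2} and Lemma~3.2 of \cite{dudh}. Your approach---encoding both $M_h$ and $S\Omega_g^h$ as linear functionals in the ``moments'' $\mathfrak m_{k,p}=\sum_j(-1)^{h+j}c_{j,k}j^p$, and then showing by a triangular (weight-by-weight) induction that the $M_h$-functional at each level lies in the span of the $S\Omega_g^h$-functionals---is a clean and genuinely different packaging of the computation. The identities $S\Omega=\sum_j(-1)^{h+j}Y_h(\theta_{x-j})\Psi_j^h$ and $M_h(x)=\sum_j(-1)^{h+j}\Psi_j^h(x+j)$ you start from are exactly the cofactor expansions implicit in the definitions, and the final degree count is carried out correctly.

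One small imprecision worth flagging: your assertion that $a_{g,p,p}$ is ``a polynomial in $g$ of degree exactly $p$'' is literally true only when the leading coefficient of $\theta_x$ is $1$ (as it is for the Jacobi eigenvalue $\theta_x=x(x+\alpha+\beta+1)$). In general, writing $\theta_x=ax^2+bx+c$, one has $a_{g,p,p}=(-1)^p a^g\binom{2g}{p}$, which carries the extra factor $a^g$. This does not affect your argument, since what you actually need is the invertibility of the matrix $(a_{g,p,p})_{0\le g,p\le s'}$; after factoring $a^g$ from each row and $(-1)^p$ from each column you are left with $(\binom{2g}{p})_{g,p}$, whose columns are polynomials in $g$ of strictly increasing degree and hence reduce by column operations to a Vandermonde. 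Also note that the lemma's hypothesis ``$S\Omega_g^h=0$ for $g=0,\dots,g_h$'' is used in the paper (see the proof of Theorem~\ref{mainth3}) as ``$g<g_h$'', so your implicit reading that $S\Omega_{g_h}^h$ is the first nonzero one is the intended interpretation; when $g_h=0$ the degree constraint in the hypothesis still covers all $g\le s<\tilde d-B$, so the final step goes through in that case as well.
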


\begin{proof}
This lemma is a variant of Lemma 3.4 in \cite{ddI2}, and can be proved as
Lemma 3.2 in \cite{dudh}.
\end{proof}

\begin{lemma}\label{l6.3}
For $m_1,m_2\geq0$ with $m=m_1+m_2\geq1$, let $\bm M=(M_{i,j})_{i,j=0}^{m_1-1}$ and $\bm N=(N_{i,j})_{i,j=0}^{m_2-1}$ be $m_1\times m_1$ and $m_2\times m_2$ matrices, respectively. If $\bm M ,\bm N \not =0$, we assume, in addition, that $\alpha$ and $\beta$ are nonnegative integers with $\alpha \ge m_2$ and $\beta \ge m_1$. If, instead, $\bm M=0$,  we assume that only $\alpha$ is a positive integer with $\alpha \ge m_2$, and if $\bm N=0$, we assume that only $\beta$ is a positive integer with $\beta \ge m_1$. For $j=1,\ldots , m$, define the polynomials $Y_l$, $Y_l(\theta_x)=z_l(x)$, where $z_l$ is defined by (\ref{zz1b}) and (\ref{zz2b}). Then the degree of the polynomial $P$ defined by (\ref{def2p}) is $2(\bwr (\bm M)+\awr (\bm N))$.
\end{lemma}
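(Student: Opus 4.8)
The plan is to reduce everything to the exact degree formula of Lemma \ref{lgp1}, after first transforming the tuple $(Y_1,\dots,Y_m)$ (where $z_l=Y_l(\theta_x)$) by elementary operations into one whose polynomials have pairwise distinct $\theta$-degrees within each of the blocks $\UU_1$ and $\UU_2$. Such within-block operations act as row operations on the determinant \eqref{def2p}, since for $i,i'$ in the same block the prefactor of the corresponding rows is common; they therefore multiply $P$ by a nonzero constant and leave $\deg P$ unchanged (this is exactly the invariance of Lemma \ref{l6.1}). Because both the degree bound \eqref{ddp} and the Vandermonde leading coefficient in Lemma \ref{lgp1} split additively into a $\UU_1$-part (built from $\bm M$) and a $\UU_2$-part (built from $\bm N$), and the reductions act inside each block separately, it suffices to show that the $\UU_1$-block contributes $2\bwr(\bm M)$ to $\deg P$; the $\UU_2$-block is identical after swapping the roles of $\alpha,\beta$ and gives $2\awr(\bm N)$.

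First I would record the degree structure of the $z_l$. Using \eqref{zz1b} and the identity $u_j^\lambda(x)=\prod_{i=1}^j[(\alpha-\lambda+i)(\beta+\lambda-i+1)+\theta_x]$, for $l\le m_1$ the polynomial $z_l$ is, up to nonzero constants, a low-degree term $u_{m_1-l}^\alpha$ of $\theta$-degree $m_1-l$ plus the terms $u_{\beta+i}^0$ of $\theta$-degree $\beta+i$, whose coefficients form a vector $v_l\in\RR^{m_1}$. A direct inspection of the inner sum in \eqref{zz1b} shows that $v_l$ is a triangular, invertible combination of the columns $c_{l-1},c_l,\dots$ of $\bm M$ (after a fixed nonzero diagonal row-rescaling that preserves all column- and row-dependence data, hence preserves $\bwr$), with a nonzero coefficient on $c_{l-1}$. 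Consequently $\langle v_l,\dots,v_{m_1}\rangle=\langle c_{l-1},\dots,c_{m_1-1}\rangle$, so the linear-dependence pattern of the $v_l$ is that of the columns of $\bm M$; moreover, since $\beta\ge m_1>m_1-l$, the $\theta$-degree of $z_l$ equals $\beta$ plus the top nonzero row of $v_l$ when $v_l\ne0$, and equals $m_1-l$ when $v_l=0$.

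I would then reduce in two stages mirroring Definition \ref{wrM}. In the first stage (the $\eta_j$), processing columns from right to left and using Lemma \ref{l6.1} to replace $z_l$ by $z_l-\sum_{l'>l}\mu_{l'}z_{l'}$ whenever $c_{l-1}\in\langle c_l,\dots,c_{m_1-1}\rangle$, the whole high-degree part of such a $z_l$ cancels (because then $v_l\in\langle v_{l'}:l'>l\rangle$), so its $\theta$-degree drops to $m_1-l$; the surviving $z_l$ correspond exactly to the columns retained in $\tilde M$. In the second stage (the $\tau_j$), two surviving polynomials share a $\theta$-degree precisely when two columns of $\tilde M$ have the same top nonzero row; combining them to cancel the common leading term lowers one degree, and iterating this dual row-elimination on $\tilde M$ makes all $\theta$-degrees within the block distinct. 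Since each such operation changes $P$ only by a nonzero scalar, $\deg P$ is unaffected, and one checks—exactly as in the bookkeeping of \cite{ddI1,dudh}—that the resulting multiset of distinct degrees sums to $\sum_j\eta_j+\sum_j\tau_j$; Lemma \ref{lgp1} then yields a block contribution $2\bigl(\sum_j\eta_j+\sum_j\tau_j\bigr)-2\binom{m_1}{2}=2\bwr(\bm M)$ with nonzero (Vandermonde) leading coefficient. The degenerate cases $\bm M=0$ or $\bm N=0$ are contained in the same count, with $\bwr(0)=0$, and there only the relevant one of $\alpha,\beta$ need be an integer since the surviving $u_j^\alpha$ are polynomials in $\theta_x$ unconditionally.

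The main obstacle is the second stage: proving that the row-dependence reductions on $\tilde M$ terminate with pairwise distinct $\theta$-degrees, that the total degree drop is accounted for precisely by $\sum_j\tau_j$, and that no leading coefficient cancels accidentally so that the final Vandermonde factor is genuinely nonzero. This is the step where the two-part combinatorics of Definition \ref{wrM} must be matched exactly to the column-then-row elimination; it is the technical heart and is the part that parallels the corresponding lemmas of \cite{ddI1,dudh}.
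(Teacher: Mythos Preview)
Your approach is correct and is essentially the same as the paper's: the paper's proof consists entirely of the sentence ``It is analogous to the proof of Lemma 4.1 in \cite{ddI1},'' and what you have outlined---reducing to Lemma \ref{lgp1} after within-block elementary operations that mirror the two stages of Definition \ref{wrM}, then invoking the block-split degree formula with nonvanishing Vandermonde leading coefficient---is precisely that analogous argument, carried out in more detail than the paper itself provides.
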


\begin{proof}
It is analogous to the proof Lemma 4.1 in \cite{ddI1}.
\end{proof}

\begin{proof}[Proof of the last part of Theorem \ref{mainth3}]

We will compute the order of the operator $D_{S}$ in Theorem \ref{mainth3}. Given $m$ numbers $u_i$, $i=1,\ldots, m$, and two nonnegative integers $m_1$ and $m_2$ with $m_1+m_2=m$,
we form the pair $\U=(U_1,U_2)$, where $U_1$ is the $m_1$-tuple
$U_1=(u_1,\ldots, u_{m_1})$ and  $U_2$ is the $m_2$-tuple
$U_2=(u_{m_1+1},\ldots, u_{m})$.
Given a polynomial $\Xi$ which is invariant under the action of $\I^{\alpha+\beta-m-1}$, we associate the rational function $S$ as in (\ref{GGH}). Given $m$ polynomials $Y_i$, $i=1,\ldots, m$, with $\deg Y_i=u_i$, we consider the (quasi) Casorati determinant $\Omega$ and the polynomial $P$ as in (\ref{def2p}). As established in the proof of Lemma \ref{l5.2} (see (\ref{psw})), we have $S(x)\Omega(x)=\sigma_{x-\frac{m-1}{2}}\Xi(x)P(x)$, and then
\begin{equation}\label{grs}
\mbox{the degree of $S\Omega $ is $\deg \Xi +d+1$, where $d$ is the degree of $P$.}
\end{equation}

Consider now the polynomials $Y_l$, defined by $Y_l(\theta_x)=z_l(x)$, $l=1,\ldots , m$, where  $z_l$ are the polynomials (\ref{zz1b}) and (\ref{zz2b}).
The operator $D_{q,S}$ (\ref{Dq}) is the sum of the operators $T_1=\frac{1}{2}P_S(D_{p})$ and $T_2=\sum_{h=1}^m \tilde M_h(D_{p})\D_h Y_h(D_{p})$. Since the order of the differential operator $D_{p}$ is $2$, it is clear from the definition of the polynomial $P_S$ that the order of $P_S(D_{p})$ is just $2\deg P_S$. According to (\ref{Pdiff2}), we have that $2\deg P_S=\deg (S\Omega)+1$.
Using (\ref{grs}), we get $\deg P_S=\deg(\Xi)/2+d/2+1$. Hence
\begin{equation}\label{dddd}
\mbox{the order of $T_1$ is $\deg(\Xi)+d+2$,}
\end{equation}
where $d$ is the degree of the polynomial
$P$ associated to the polynomials $Y_l$ (see (\ref{def2p})).
Using Lemma \ref{l6.3}, we then get that
the order of $T_1$ is $\deg(\Xi)+2(\bwr (\bm M)+\awr (\bm N))+2$.
It is now enough to prove that the order of the operator $T_2$ is less than the order of $T_1$.

To stress the dependence of $P_S$, $P$, $M_h$, $\tilde M_h$, $\Omega$ and the operator $D_{q,S}$ on the $m$-tuple of polynomials
$\cY =(Y_1,\ldots ,Y_m)$, we write  $P_S=P_S(\cY )$, $P=P(\cY )$, $M_h=M_h(\cY )$, $\tilde M_h=\tilde M_h(\cY )$, $\Omega=\Omega(\cY)$ and $D_{q,S}=D_{q,S}(\cY )$.

Interchanging and using linear combinations of two polynomials, we can get from the polynomials $Y_i$, $i=1,\ldots , m$, new polynomials $\hat Y_i$, $i=1,\ldots , m$, satisfying that
\begin{align}\nonumber
& \mbox{$\deg \hat Y_i\not =\deg \hat Y_{j}$, $i\not =j$, $1\le i,j \le m_1$, or $m_1+1\le i,j \le m$.}\\\label{mmm}
& \mbox{$\deg \hat Y_i$ is increasing from $i=1,\ldots, m_1$, and from $i=m_1+1,\ldots, m$.}\\\label{mm}
& \mbox{Fixing $h$, $1\le h\le m_1$, define $g_h$ as the first nonnegative integer such that}\\\nonumber
& \mbox{$g_h\not \in \{\deg \hat Y_i,i=1,\ldots , m_1\}$. Then for $0\le g< g_h$, $\hat Y_g=x^g$. The same for}\\\nonumber
& \mbox{$m_1+1\le h\le m$.}
\end{align}
Write $\hat U_1=\{\deg \hat Y_i, 1\le i\le m_1\}$ and $\hat U_2=\{\deg \hat Y_i, m_1+1\le i\le m\}$.

Using the invariance properties (\ref{Dqinv1}) and (\ref{Dqinv2}),
we then have
\begin{align*}
P_S&=P_S(\cY )=P_S(\hat \cY ),\\
D_{q,S}&=D_{q,S}(\cY )=D_{q,S}(\hat \cY ),
\end{align*}
where $\hat \cY =(\hat Y_1,\ldots ,\hat Y_m)$.
If we write $\hat M_h= M_h(\hat \cY )$, $\hat {\tilde M}_h=\tilde M_h(\hat \cY )$, $h=1,\ldots , m$, we then have
\begin{equation}\label{ot2}
T_2=\sum_{h=1}^m\hat{\tilde M}_h(D_{p})\D \hat Y_h(D_{p}).
\end{equation}
Since $T_1=P_S(\cY )=P_S(\hat \cY )$, we have as before
that the order of $T_1$ is $\deg(\Xi)+d+2$, where $d$ is the degree of the polynomial $P(\hat \cY)=P(\cY)$, which according to Lemma \ref{lgp1} is
((\ref{mmm}) says that the elements in $\hat U_1$ and $\hat U_2$ are different, respectively)
\begin{equation}\label{ddd}
d=2\sum_{u\in \hat U_1,\hat U_2}u-2\sum_{i=1}^2\binom{m_i}{2}.
\end{equation}
A straightforward computation using (\ref{ot2}) shows that the order of the operator $T_2$ is less than or equal to
$$
\max \{ 2\deg \hat{\tilde M}_h+2\deg \hat Y_h +1, h=1,\ldots ,m \}.
$$
From (\ref{ass1}), we get that $\deg \hat{\tilde M}_h=(\deg \hat{M}_h-1)/2$. Hence
the order of the operator $T_2$ is less than or equal to
$$
\max \{ \deg \hat{ M}_h+2\deg \hat Y_h , h=1,\ldots ,m \}.
$$
Fixed now $h$, $1\le h\le m_1$ (the same for $m_1+1\le h\le m$).
We  write $\hat \cY ^h_g$ for the $m$-tuple formed by changing the polynomial $\hat Y_h$ to $x^g$ in $\hat \cY$, and write
$\hat \Omega^h_g=\Omega (\hat \cY^h_g)$. Using (\ref{mm}), we get that $\hat \Omega ^h_g=0$ for $0\le g<g_h$ (there are two equal rows) and hence $S\hat \Omega ^h_g=0$ for $0\le g<g_h$. Using (\ref{grs}), we have that  $\deg (S\hat \Omega^h_g)=\deg \Xi +\deg P(\hat \cY^h_g)+1$.
(\ref{mmm}) and (\ref{mm}) show that the degrees of the $m_1$ first polynomials in $\cY^h_{g_h}$ are different, as well as those of the $m_2$ last polynomials. Then Lemma \ref{lgp1} gives
\begin{equation}\label{m4m}
\deg P(\hat \cY^h_{g_h})=2\sum_{u\in\left(\hat U_1\setminus \{\deg \hat Y_h\},\hat U_2\right)}u+2g_h-2\sum_{i=1}^2\binom{m_i}{2}.
\end{equation}
For $g>g_h$, Lemma \ref{lgp1} also gives
$$
\deg P(\hat \cY^h_{g})\le 2\sum_{u\in\left(\hat U_1\setminus \{\deg \hat Y_h\},\hat U_2\right)}u+2g-2\sum_{i=1}^2\binom{m_i}{2}=
\deg P(\hat \cY^h_{g_h})+2(g-g_h).
$$
Using (\ref{grs}), we get that
 $\deg (S\hat \Omega ^h_g)\le 2(g-g_h)+ \deg (S\hat \Omega ^h_{g_h})$ for $g\ge g_h$.

Using now Lemma \ref{l6.2}, we get that the degree of the polynomial $\hat M_h$ is less than or equal to $\deg (S\hat \Omega ^h_{g_h})-2g_h$. Hence,
using (\ref{grs}), (\ref{ddd}) and (\ref{m4m}), we have
\begin{align*}
\deg \hat{ M}_h+2\deg \hat Y_h & \le \deg (S\hat \Omega ^h_{g_h})-2g_h+2\deg \hat Y_h\\
&=\deg \Xi +\deg P(\hat \cY^h_{g_h})+1-2g_h+2\deg \hat Y_h
\\ & =\deg \Xi+2\sum_{u\in \left(\hat U_1,\hat U_2\right)}u-2\sum_{i=1}^2\binom{m_i}{2}+1\\
& =\deg \Xi +d+1.
\end{align*}
Comparing with (\ref{dddd}), this gives that the order of the operator $T_2$ is less than  the order of $T_1$. This completes the proof of Theorem \ref{mainth3}.
\end{proof}


\begin{thebibliography}{GPT4}



\bibitem{AD}  R. \'Alvarez-Nodarse and A.J. Dur\'an,
Using $\D$-operators to construct orthogonal polynomials satisfying higher-order $q$-difference equations,  J. Math. Anal. Appl. \textbf{424} (2015), 304--320.

\bibitem{Ba} H. Bavinck,
Differential operators having Sobolev-type Jacobi polynomials as eigenfunctions,
J. Compt. Appl. Math. \textbf{151} (2003), 271--295.



\bibitem{B} S. Bochner,
\"Uber Sturm--Liouvillesche polynomsysteme, Math. Z. \textbf{29}
(1929), 730--736.







\bibitem{du1} A.J. Dur\'an,
Using $\D$-operators to construct orthogonal polynomials satisfying  higher-order difference or differential equations, J. Approx. Theory \textbf{174} (2013), 10--53.

\bibitem{dudh} A.J. Dur\'an,
Constructing bispectral dual Hahn polynomials, J. Approx. Theory \textbf{189} (2015) 1--28.

\bibitem{ddI} A.J. Dur\'an and M.D. de la Iglesia,
Constructing bispectral orthogonal polynomials  from the classical
discrete families of Charlier, Meixner and Krawtchouk,  Constr. Approx. \textbf{41} (2015), 49--91.

\bibitem{ddI1} A.J. Dur\'an and M.D. de la Iglesia,
Differential equations for discrete Laguerre-Sobolev orthogonal polynomials, J. Approx. Theory \textbf{195} (2015), 70--88.

\bibitem{ddI2} A.J. Dur\'an and M.D. de la Iglesia,
Constructing Krall-Hahn orthogonal polynomials,  J. Math. Anal. Appl.
\textbf{424} (2015), 361--384.

\bibitem{Bate} A. Erdelyi, W. Magnus, F. Oberhettinger, F.G. Tricomi, Higher Trascendental Functions, Vol. II, McGrawHill, New York, 1953 (Bateman project).


\bibitem{GrH1}  F.A. Gr\"unbaum and L. Haine,
Orthogonal polynomials satisfying differential equations: the role of the Darboux transformation,
in: D. Levi, L. Vinet, P. Winternitz (Eds.), Symmetries an Integrability of
Differential Equations, CRM Proc. Lecture Notes, vol. 9, Amer. Math. Soc. Providence, RI, 1996, 143--154.

\bibitem{GrHH}
F.A. Gr\"unbaum, L. Haine and E. Horozov,
Some functions that generalize the Krall-Laguerre polynomials,
J. Comput. Appl. Math. \textbf{106} (1999), 271--297.

\bibitem{GrY}
F.A. Gr\"unbaum and M. Yakimov,
Discrete bispectral Darboux transformations from Jacobi operators.
Pac. J. Math., \textbf{204} (2002), 395--431.

\bibitem{HP}
L. Haine and P. Iliev,
Askey-Wilson type functions with bound state,
Ramanujan J. \textbf{11} (2006), 285--329.



\bibitem{Plamen1} P. Iliev,
Krall-Jacobi commutative algebras of partial differential operators,
J. Math. Pures Appl. \textbf{96} (2011), 446--461.

\bibitem{Plamen2} P. Iliev,
Krall-Laguerre commutative algebras of ordinary
differential operators,
Ann. Mat. Pur. Appl. \textbf{192} (2013), 203--224.

\bibitem{Plamen3} P. Iliev,
Bispectral extensions of the Askey-Wilson polynomials,
J. Funct. Anal. \textbf{266} (2014), 2294--2318.

\bibitem{JKLL1} I.H. Jung, K.H. Kwon, D.W. Lee and L.L. Littlejohn,
Sobolev orthogonal polynomials and spectral differential
equations, Trans. Amer. Math. Soc. \textbf{347} (1995),
3629--3643.

\bibitem{JKLL2} I.H. Jung, K.H. Kwon, D.W. Lee and L.L. Littlejohn,
Differential equations and Sobolev orthogonality, J. Comp. Appl.
Math. \textbf{65} (1995), 173--180.


\bibitem{Ko}T.H. Koornwinder,
Orthogonal polynomials with weight function $(1-x)^\alpha(1+x)^\beta+M\delta(x+ 1) + N\delta(x - 1)$,
Canad. Math. Bull. \textbf{27} (1984), 205--214.

\bibitem{koe} R. Koekoek,
Differential Equations for Symmetric Generalized Ultraspherical Polynomials,
Trans. Amer. Math. Soc. \textbf{345} (1994), 47--72.

\bibitem{koekoe2} J. Koekoek and R. Koekoek,
Differential equations for generalized Jacobi polynomials,
J. Compt. Appl. Math. \textbf{126} (2000), 1--31.


\bibitem{Kr1}\textrm{H.L. Krall},
Certain differential equations for Tchebycheff polynomials, Duke
Math. J. $\mathbf{4}$ (1938), 705--718.

\bibitem{Kr2}\textrm{H.L. Krall},
On orthogonal polynomials satisfying a certain fourth-order
differential equation, The Pennsylvania State College Studies, No.
6, 1940.


\bibitem{L1} L.L. Littlejohn,
The Krall polynomials: a new class of orthogonal polynomials,
Quaest. Math.  {\bf 5} (1982), 255--265.




\bibitem{Mar} C. Markett,
New representation and factorizations of the higher-order ultraspherical-type differential equations, J. Math. Anal. Appl. {\bf 421} (2015) 244--259.

\bibitem{Rou} E. Routh,
On some properties of certain solutions of a differential equation
of the second order,
Proc. London Math. Soc., {\bf 16} (1884) 245--261.


\bibitem{STW}  J. Shen, T. Tang and L. Wang, Spectral Methods. Algorithms, Analysis and Applications, Springer Series in Computational Mathematics 41, Springer-Verlag, Berlin Heidelberg, 2011.


%

\bibitem{Zh} A. Zhedanov,
A method of constructing Krall's polynomials,
J. Compt. Appl. Math. \textbf{107} (1999), 1--20.




\end{thebibliography}
\end{document}